\titleformat{\chapter}[display]
{\normalfont\huge\bfseries}{\chaptertitlename\\thechapter}{20pt}{\Huge}
\titleformat{\paragraph}[runin]
{\normalfont\normalsize\bfseries}{\theparagraph}{1em}{}
\titleformat{\subparagraph}[runin]
{\normalfont\normalsize\bfseries}{\thesubparagraph}{1em}{}
\titlespacing*{\chapter} {0pt}{50pt}{40pt}
\titlespacing*{\section} {0pt}{3.5ex plus 1ex minus .2ex}{2.3ex plus .2ex}
\titlespacing*{\subsection} {0pt}{3.25ex plus 1ex minus .2ex}{1.5ex plus .2ex}
\titlespacing*{\subsubsection}{0pt}{3.25ex plus 1ex minus .2ex}{1.5ex plus .2ex}
\titlespacing*{\paragraph} {0pt}{3.25ex plus 1ex minus .2ex}{1em}
\titlespacing*{\subparagraph} {\parindent}{3.25ex plus 1ex minus .2ex}{1em}
\newtheorem{theorem}{Theorem}[section]
\newtheorem{lemma}[theorem]{Lemma}
\newtheorem{proposition}[theorem]{Proposition}
\newtheorem{thmx}{Theorem}
\theoremstyle{definition}
\theoremstyle{remark}
\newtheorem{remark}[theorem]{Remark}
\DeclareMathOperator{\ide}{id}
\DeclareMathOperator{\RH}{RH}
\DeclareMathOperator{\Ss}{S}
\DeclareMathOperator{\ima}{Im}
\DeclareMathOperator{\Ho}{H}
\DeclareMathOperator{\Hom}{Hom}
\DeclareMathOperator{\Tot}{Tot}
\DeclareMathOperator{\RC}{RC}
\DeclareMathOperator{\sh}{sh}
\DeclareMathOperator{\sg}{sg}
\DeclareMathOperator{\Tor}{Tor}
\newcommand{\xcirc}{\hspace{0.9pt}}
\newcommand{\ov}{\overline}
\newcommand{\ot}{\otimes}
\newcommand{\wh}{\widehat}
\newcommand{\ep}{\epsilon}
\newcommand{\bx}{\mathbf x}
\newcommand{\byy}{\mathbf y}
\numberwithin{equation}{section}
\DeclareMathAlphabet{\mathpzc}{OT1}{pzc}{m}{it}
\begin{document}

\title{(Co)homology of Some Cyclic Linear Cycle Sets}

\author{Jorge A. Guccione}
\address{Departamento de Matem\'atica\\ Facultad de Ciencias Exactas y Naturales-UBA, Pabell\'on~1-Ciudad Universitaria\\ Intendente Guiraldes 2160 (C1428EGA) Buenos Aires, Argentina.}
\address{Instituto de Investigaciones Matem\'aticas ``Luis A. Santal\'o''\\ Pabell\'on~1-Ciudad Universitaria\\ Intendente Guiraldes 2160 (C1428EGA) Buenos Aires, Argentina.}
\email{vander@dm.uba.ar}

\author{Juan J. Guccione}
\address{Departamento de Matem\'atica\\ Facultad de Ciencias Exactas y Naturales-UBA\\ Pabell\'on~1-Ciudad Universitaria\\ Intendente Guiraldes 2160 (C1428EGA) Buenos Aires, Argentina.}
\address{Instituto Argentino de Matem\'atica-CONICET\\ Saavedra 15 3er piso\\ (\!C1083ACA\!) Buenos Aires, Argentina.}
\email{jjgucci@dm.uba.ar}

\subjclass[2020]{16T25, 20N02, 20E22.}
\keywords{Cycle sets, Yang Baxter equation, extensions, cohomology}

\begin{abstract} For each member $\mathcal{A}$ of a family of linear cycle sets whose underlying abelian group is cyclic of order a power of a prime number, we compute all the central extensions of $\mathcal{A}$ by an arbitrary abelian group.
\end{abstract}

\maketitle

\tableofcontents

\section*{Introduction}\label{section:Introduccion}

A {\em cycle set}, as defined in \cite{R1}, is a set $A$ endowed with a binary operation $\cdot$, such that the left translations $a\mapsto b\cdot a$ are bijective and the identities
$$
(a\cdot b)\cdot (a\cdot c) = (b\cdot a)\cdot (b\cdot c)
$$
are satisfied. In \cite{R1} it was proved that {\em non-degenerate cycle sets} (i.e., with invertible squaring map $a\mapsto a\cdot a$) are in bijective correspondence with non-degenerate involutive set-theoretic solutions of the Yang-Baxter equation, whose study was started by Etingof, Schedler, and Soloviev in \cite{ESS}. These solutions are connected with many domains of algebra: Garside structures, Hopf-Galois theory, affine torsors, Artin-Schelter regular rings, groups of $I$-type, left symmetric algebras, etcetera (see, for instance \cites{CJO, DG, De1, DDM, Ch, GI1, GI2, GI3, GI4, GIM, GIVB, JO, R2, R3}). A {\em linear cycle set} is a cycle set $(A,\cdot)$ endowed with an abelian group operation $+$ satisfying the identities
$$
a\cdot(b+c) = a\cdot b + a\cdot c\quad\text{and}\quad (a+b)\cdot c = (a\cdot b)\cdot (a\cdot c).
$$
The interest in these structures is due to the fact that they are equivalent to brace structures, and so they are strongly related with the non-degenerated involutive set theoretic solutions of the Yang-Baxter equations. For instance, the structure group of a non-degenerate solution \cite{ESS} is a brace in a natural way.

Motivated by the problem of the classify braces, in \cite{BCJO} the authors point out the importance of to develop a extensions theory of braces (or equivalently, of linear cycle sets). This was made out by Bachiller in \cite{B}, using the language of braces; by Ben David and Ginosar in \cite{BG}, using the language of bijective $1$-cocycles (other avatar of linear cycle sets); and by Lebed and Vendramin in \cite{LV}, using the language of linear cycle sets. In the last approach the authors introduce a cohomology theory $\Ho_N^*(\mathcal{A},\Gamma)$, in order to classify the central extensions of a linear cycle set $\mathcal{A}=(A,+,\cdot)$ by an abelian group $\Gamma$. This cohomology is defined by using a explicit cochain complex $\bigl(C_N^*(\mathcal{A},\Gamma),\partial^*\bigr)$. This allows to use homological methods in order to studied these extensions. To be something more precise, when $\mathcal{A}$ is a group $(A,+)$ endowed with the trivial linear cycle set structure $a\cdot b\coloneqq b$, one can use resolutions, satellite functors, simplicial methods, etcetera, to make calculations and to obtain theoretical results about $\Ho_N^*(\mathcal{A},\Gamma)$; and it is reasonable to expect that, under right circumstances, these calculations and results can be extended to more general types linear cycle sets. For example, this occurs if the necessary hypotheses to apply the Perturbation Lemma are satisfied.

\smallskip

Let $p\in \mathds{N}$ be a prime number and let $\nu,\eta\in \mathds{N}$ be such that $0<\nu\le \eta\le 2\nu$. Let $u\coloneqq p^{\nu}$, $v\coloneqq p^{\eta}$, $t\coloneqq p^{\eta-\nu}$, $u'\coloneqq p^{2\nu-\eta}$ and let $\mathcal{A}$ be the linear cycle set $(\mathds{Z}/v\mathds{Z};\cdot)$, where $\imath\cdot \jmath\coloneqq (1-u\imath)\jmath$. Let $\Gamma$ be an additive abelian group. The main result of this paper are the following:

\begin{thmx}
Assume that $u=v$. For each $\gamma,\gamma_1\in \Gamma$ such that $v\gamma_1=0$, let $\ov{\xi}^1_{\gamma},\ov{\xi}^2_{\gamma_1}\colon \frac{\mathds{Z}} {v\mathds{Z}}\times \frac{\mathds{Z}}{v\mathds{Z}}\to \Gamma$ be the maps defined by
$$
\ov{\xi}^1_{\gamma}(\imath_1,\imath_2)\coloneqq \xi^1_{\gamma}([g^{\imath_1}\ot g^{\imath_2}])\quad\text{and}\quad \ov{\xi}^2_{\gamma_1}(\imath_1,\imath_2)\coloneqq \xi^2_{\gamma_1}(g^{\imath_1}\ot g^{\imath_2}),
$$
where $\xi^1_{\gamma}$ and $\xi^2_{\gamma_1}$ are as above of Proposition~\ref{complemento 1}. The following facts hold:

\begin{enumerate}

\item $\Gamma\times  \frac{\mathds{Z}}{v\mathds{Z}}$ is a linear cycle set via
$$
(c,\imath)+(c',\imath') \coloneqq \bigl(c+c'+\ov{\xi}^1_{\gamma}(\imath,\imath'),\imath+\imath'\bigr)\quad\text{and}\quad (c,\imath)\cdot (c',\imath') \coloneqq \bigl(c'+\ov{\xi}^2_{\gamma_1}(\imath,\imath'),\imath\cdot \imath'\bigr).
$$
Following \cite{LV} we denoted this linear cycle set by $\Gamma\oplus_{\ov{\xi}^2_{\gamma_1},\ov{\xi}^1_{\gamma}} \frac{\mathds{Z}}{v\mathds{Z}}$. Moreover
$$
\begin{tikzpicture}
\begin{scope}[yshift=0cm,xshift=0cm]
\matrix(BPcomplex) [matrix of math nodes, row sep=0em, text height=1.5ex, text
depth=0.25ex, column sep=2.5em, inner sep=0pt, minimum height=5mm, minimum width =6mm]
{0 & \Gamma & \Gamma\oplus_{\ov{\xi}^2_{\gamma_1},\ov{\xi}^1_{\gamma}} \frac{\mathds{Z}}{v\mathds{Z}} & \frac{\mathds{Z}}{v\mathds{Z}} & 0,\\};
\draw[->] (BPcomplex-1-1) -- node[above=1pt,font=\scriptsize] {} (BPcomplex-1-2);
\draw[->] (BPcomplex-1-2) -- node[above=1pt,font=\scriptsize] {$\iota$} (BPcomplex-1-3);
\draw[->] (BPcomplex-1-3) -- node[above=1pt,font=\scriptsize] {$\pi$} (BPcomplex-1-4);
\draw[->] (BPcomplex-1-4) -- node[above=1pt,font=\scriptsize] {} (BPcomplex-1-5);
\end{scope}
\end{tikzpicture}
$$
where $\iota$ and $\pi$ are the evident maps, is a central extension of $(\mathds{Z}/v\mathds{Z};\cdot)$ by $\Gamma$ in the sense of~\cite{LV}*{Def\-inition~5.5}.

\item The extension associated with $(\ov{\xi}_{\gamma}^1,\ov{\xi}_{\gamma_1}^2)$ and $(\ov{\xi}_{\gamma'}^1,\ov{\xi}_{\gamma'_1}^2)$ are equivalent if and only if $\gamma'_1 = \gamma_1$ and $v\gamma'=v\gamma$; and each central extension of $(\mathds{Z}/v\mathds{Z};\cdot)$ by $\Gamma$, is equivalent to one of these.

\end{enumerate}
\end{thmx}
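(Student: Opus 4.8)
The plan is to reduce the statement to the homological machinery already in place. Since $u=v$, we have $u\imath\equiv 0$ in $\frac{\mathds{Z}}{v\mathds{Z}}$, so $\imath\cdot\jmath=(1-u\imath)\jmath=\jmath$; that is, $\mathcal{A}$ is the trivial linear cycle set structure on the cyclic group $\frac{\mathds{Z}}{v\mathds{Z}}$. Hence the complex $C_N^*(\mathcal{A},\Gamma)$ and its cohomology $\Ho_N^*(\mathcal{A},\Gamma)$ are exactly the ones computed earlier through resolutions and the Perturbation Lemma, and Proposition~\ref{complemento 1} supplies both the explicit cocycles $\xi^1_{\gamma}$, $\xi^2_{\gamma_1}$ and the description of $\Ho_N^2(\mathcal{A},\Gamma)$ that we shall use.

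For item~(1) I would first check that transporting $\xi^1_{\gamma}$ and $\xi^2_{\gamma_1}$ along the $\mathds{Z}$-basis $\{g^{\imath}:0\le\imath<v\}$ of $\mathds{Z}\bigl[\frac{\mathds{Z}}{v\mathds{Z}}\bigr]$ identifies the complex on which they live with $C_N^*(\mathcal{A},\Gamma)$, so that $(\ov{\xi}^1_{\gamma},\ov{\xi}^2_{\gamma_1})$ is a normalized $2$-cocycle of $\mathcal{A}$ with coefficients in $\Gamma$. By the dictionary of \cite{LV} between normalized $2$-cocycles and central extensions this gives everything at once; concretely, $\ov{\xi}^1_{\gamma}$ being a normalized, symmetric $2$-cocycle of the additive group $\frac{\mathds{Z}}{v\mathds{Z}}$ makes $\bigl(\Gamma\times\frac{\mathds{Z}}{v\mathds{Z}},+\bigr)$ an abelian group for which $\iota$ and $\pi$ are morphisms, $\ov{\xi}^2_{\gamma_1}$ being $\mathds{Z}$-bilinear (the hypothesis $v\gamma_1=0$ being exactly the condition under which such a bilinear map exists) yields the two linear cycle set identities while the cycle set identity itself holds automatically, since the induced operation on $\frac{\mathds{Z}}{v\mathds{Z}}$ is trivial and the left translations are visibly bijective, and the normalizations $\ov{\xi}^1_{\gamma}(0,-)=\ov{\xi}^2_{\gamma_1}(0,-)=\ov{\xi}^2_{\gamma_1}(-,0)=0$ make $\iota(\Gamma)$ central in the sense of \cite{LV}*{Definition~5.5}. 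All of this can alternatively be verified by a direct, if somewhat lengthy, computation.

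For item~(2) I would invoke the classification theorem of \cite{LV}: equivalence classes of central extensions of $\mathcal{A}$ by $\Gamma$ correspond bijectively to the elements of $\Ho_N^2(\mathcal{A},\Gamma)$, the class attached to $\Gamma\oplus_{\ov{\xi}^2_{\gamma_1},\ov{\xi}^1_{\gamma}}\frac{\mathds{Z}}{v\mathds{Z}}$ being $\bigl[(\ov{\xi}^1_{\gamma},\ov{\xi}^2_{\gamma_1})\bigr]$. It then remains to show that $(\gamma,\gamma_1)\mapsto\bigl[(\ov{\xi}^1_{\gamma},\ov{\xi}^2_{\gamma_1})\bigr]$ is surjective and that $\bigl[(\ov{\xi}^1_{\gamma},\ov{\xi}^2_{\gamma_1})\bigr]=\bigl[(\ov{\xi}^1_{\gamma'},\ov{\xi}^2_{\gamma'_1})\bigr]$ precisely when $\gamma_1=\gamma'_1$ and $v\gamma=v\gamma'$. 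Both assertions should be read off from the description of $\Ho_N^2(\mathcal{A},\Gamma)$ and of the coboundaries of $C_N^*(\mathcal{A},\Gamma)$ coming from Proposition~\ref{complemento 1} and the computations preceding it: no coboundary has a non-zero $\xi^2$-component, so the class determines $\gamma_1$; the $\xi^1$-component is a coboundary exactly when $v\gamma=0$, so its class depends only on $v\gamma$; and the $\xi^1$- and $\xi^2$-classes together exhaust $\Ho_N^2(\mathcal{A},\Gamma)$.

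The genuine difficulty is step~(2): computing the coboundaries of $C_N^*(\mathcal{A},\Gamma)$ after the transport along $\{g^{\imath}\}$, so as to decide exactly when two of the cocycles $(\ov{\xi}^1_{\gamma},\ov{\xi}^2_{\gamma_1})$ are cohomologous, and confirming that this family represents all of $\Ho_N^2(\mathcal{A},\Gamma)$. Here the comparison between the complex used in Proposition~\ref{complemento 1} and $C_N^*(\mathcal{A},\Gamma)$, and the precise shape of the perturbed differential, do the real work; checking the axioms and the centrality condition in item~(1) is routine by comparison.
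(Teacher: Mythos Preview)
Your proposal is correct and follows essentially the same route as the paper: the proof there is the one-liner ``This follows from Remark~\ref{son iguales2}, Proposition~\ref{complemento 1} and \cite{LV}*{Theorem~5.8},'' which is exactly your reduction --- transport the explicit cocycles $(\xi^1_{\gamma},\xi^2_{\gamma_1})$ from $\wh{C}_N^*(\mathcal{A},\Gamma)$ to $C_N^*(\mathcal{A},\Gamma)$ via Remark~\ref{son iguales2}, read off surjectivity and the coboundary criterion from Proposition~\ref{complemento 1}, and translate to extensions by the Lebed--Vendramin correspondence. Your observation that $u=v$ forces $\imath\cdot\jmath=\jmath$, so that $\mathcal{A}$ is the trivial linear cycle set, is a pleasant bonus (it explains why the perturbation vanishes in this case) but is not used in the paper's argument, which treats all three theorems uniformly through Proposition~\ref{complemento 1} and its analogues.
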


\begin{thmx} Assume that $2<u<v$. For each $\gamma,\gamma_1\in \Gamma$ such that $v\gamma_1=u\gamma$, let $\ov{\xi}^1_{\gamma},\ov{\xi}^2_{\gamma_1,\gamma} \colon \frac{\mathds{Z}} {v\mathds{Z}}\times \frac{\mathds{Z}}{v\mathds{Z}}\to \Gamma$ be the maps defined by
$$
\ov{\xi}^1_{\gamma}(\imath_1,\imath_2)\coloneqq \xi^1_{\gamma}([g^{\imath_1}\ot g^{\imath_2}])\quad\text{and}\quad \ov{\xi}^2_{\gamma_1,\gamma}(t\imath+\jmath,\imath_1)\coloneqq \xi^2_{\gamma_1,\gamma}(g^{t\imath+\jmath}\ot g^{\imath_1}),
$$
where $\xi^1_{\gamma}$ and $\xi^2_{\gamma_1,\gamma}$ are as above of Proposition~\ref{complemento 2}, $0\le\imath<u$ and $0\le\jmath<t$. The following facts hold:

\begin{enumerate}

\item $\Gamma\times  \frac{\mathds{Z}}{v\mathds{Z}}$ is a linear cycle set via
$$
(c,\imath)+(c',\imath') \coloneqq \bigl(c+c'+\ov{\xi}^1_{\gamma}(\imath,\imath'),\imath+\imath'\bigr)\quad\text{and}\quad (c,\imath)\cdot (c',\imath') \coloneqq \bigl(c'+\ov{\xi}^2_{\gamma_1,\gamma}(\imath,\imath'),\imath\cdot \imath'\bigr).
$$
Following \cite{LV} we denoted this linear cycle set by $\Gamma\oplus_{\ov{\xi}^2_{\gamma_1,\gamma},\ov{\xi}^1_{\gamma}} \frac{\mathds{Z}}{v\mathds{Z}}$. Moreover
$$
\begin{tikzpicture}
\begin{scope}[yshift=0cm,xshift=0cm]
\matrix(BPcomplex) [matrix of math nodes, row sep=0em, text height=1.5ex, text
depth=0.25ex, column sep=2.5em, inner sep=0pt, minimum height=5mm, minimum width =6mm]
{0 & \Gamma & \Gamma\oplus_{\ov{\xi}^2_{\gamma_1,\gamma},\ov{\xi}^1_{\gamma}} \frac{\mathds{Z}}{v\mathds{Z}} & \frac{\mathds{Z}}{v\mathds{Z}} & 0,\\};
\draw[->] (BPcomplex-1-1) -- node[above=1pt,font=\scriptsize] {} (BPcomplex-1-2);
\draw[->] (BPcomplex-1-2) -- node[above=1pt,font=\scriptsize] {$\iota$} (BPcomplex-1-3);
\draw[->] (BPcomplex-1-3) -- node[above=1pt,font=\scriptsize] {$\pi$} (BPcomplex-1-4);
\draw[->] (BPcomplex-1-4) -- node[above=1pt,font=\scriptsize] {} (BPcomplex-1-5);
\end{scope}
\end{tikzpicture}
$$
where $\iota$ and $\pi$ are the evident maps, is a central extension of $(\mathds{Z}/v\mathds{Z};\cdot)$ by $\Gamma$ in the sense of~\cite{LV}*{Def\-inition~5.5}.

\item The extension associated with $(\ov{\xi}_{\gamma}^1,\ov{\xi}_{\gamma_1,\gamma}^2)$ and $(\ov{\xi}_{\gamma'}^1,\ov{\xi}_{\gamma'_1,\gamma'}^2)$ are equivalent if and only if $\gamma_1-\gamma'_1\in u \Gamma$ and $t(\gamma_1-\gamma'_1)=\gamma-\gamma'$; and each central extension of $(\mathds{Z}/v\mathds{Z};\cdot)$ by $\Gamma$, is equivalent to one of these.

\end{enumerate}
\end{thmx}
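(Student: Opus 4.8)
The plan is to deduce Theorem~B from the cocycle-level description obtained in Proposition~\ref{complemento 2}, via the correspondence of \cite{LV} between $\Ho_N^2(\mathcal{A},\Gamma)$ and equivalence classes of central extensions of a linear cycle set $\mathcal{A}$ by an abelian group $\Gamma$. Recall from \cite{LV} that any normalized $2$-cocycle $(\theta^1,\theta^2)$ of the Lebed--Vendramin complex $C_N^*(\mathcal{A},\Gamma)$ makes $\Gamma\times A$ into a linear cycle set $\Gamma\oplus_{\theta^2,\theta^1}\mathcal{A}$, via precisely the two formulas displayed in the statement, for which $0\to\Gamma\to\Gamma\oplus_{\theta^2,\theta^1}\mathcal{A}\to\mathcal{A}\to 0$ is a central extension; that every central extension of $\mathcal{A}$ by $\Gamma$ is equivalent to one arising this way; and that two cocycles yield equivalent extensions exactly when they are cohomologous. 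On the other side, Proposition~\ref{complemento 2} supplies the cochains $\xi^1_\gamma$ and $\xi^2_{\gamma_1,\gamma}$ on the small resolution used to compute $\Ho_N^*(\mathcal{A},\Gamma)$, together with the resulting identification of $\Ho_N^2(\mathcal{A},\Gamma)$; the maps $\ov{\xi}^1_\gamma,\ov{\xi}^2_{\gamma_1,\gamma}$ of the statement are these cochains transported, via the comparison map, into $C_N^*(\mathcal{A},\Gamma)$. Granting both inputs, the theorem reduces to three points: (a) $(\ov{\xi}^1_\gamma,\ov{\xi}^2_{\gamma_1,\gamma})$ is a normalized $2$-cocycle whenever $v\gamma_1=u\gamma$; (b) every central extension is equivalent to one in the listed family; (c) the stated equivalence criterion.

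For (a), and hence for part~(1): by Proposition~\ref{complemento 2} the condition $v\gamma_1=u\gamma$ is exactly what makes $\xi^1_\gamma$ and $\xi^2_{\gamma_1,\gamma}$ cocycles on the resolution, so their transports $\ov{\xi}^1_\gamma,\ov{\xi}^2_{\gamma_1,\gamma}$ are cocycles in $C_N^*(\mathcal{A},\Gamma)$; normalization is immediate since $g^0$ is the unit, whence $\ov{\xi}^1_\gamma$ and $\ov{\xi}^2_{\gamma_1,\gamma}$ vanish as soon as one argument is $0$. Part~(1) is then the first clause of \cite{LV} recalled above, applied to this cocycle: the two displayed operations define the linear cycle set $\Gamma\oplus_{\ov{\xi}^2_{\gamma_1,\gamma},\ov{\xi}^1_\gamma}\frac{\mathds{Z}}{v\mathds{Z}}$ and the associated sequence is a central extension. (Centrality is also visible directly: putting $\imath=0$ in $(c,\imath)\cdot(c',\imath')$ returns $(c',\imath')$, because $0\cdot\imath'=\imath'$ in $\mathcal{A}$ and $\ov{\xi}^2_{\gamma_1,\gamma}(0,\imath')=0$, while the first coordinate never enters the left argument of $\cdot$.)

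For part~(2): by the last clause of \cite{LV}, the extensions attached to $(\ov{\xi}^1_\gamma,\ov{\xi}^2_{\gamma_1,\gamma})$ and to $(\ov{\xi}^1_{\gamma'},\ov{\xi}^2_{\gamma'_1,\gamma'})$ are equivalent iff these cocycles are cohomologous, and by the middle clause every central extension comes from a cocycle, which by Proposition~\ref{complemento 2} may be chosen of the form $(\ov{\xi}^1_\gamma,\ov{\xi}^2_{\gamma_1,\gamma})$; this proves (b). For (c) I would use that $\gamma\mapsto\xi^1_\gamma$ and $(\gamma_1,\gamma)\mapsto\xi^2_{\gamma_1,\gamma}$ are additive in the parameters, so that the difference of the two cocycles is again of the same shape, with parameters $\delta=\gamma-\gamma'$, $\delta_1=\gamma_1-\gamma'_1$ (note $v\delta_1=u\delta$); the question thus becomes whether $(\ov{\xi}^1_\delta,\ov{\xi}^2_{\delta_1,\delta})$ is a coboundary, and the cohomology computation behind Proposition~\ref{complemento 2} answers: yes iff $\delta_1\in u\Gamma$ and $\delta=t\delta_1$, i.e. iff $\gamma_1-\gamma'_1\in u\Gamma$ and $t(\gamma_1-\gamma'_1)=\gamma-\gamma'$. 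Equivalently, $(\gamma,\gamma_1)\mapsto[(\ov{\xi}^1_\gamma,\ov{\xi}^2_{\gamma_1,\gamma})]$ induces an isomorphism $\{(\gamma,\gamma_1)\in\Gamma^2:v\gamma_1=u\gamma\}\big/\{(v\epsilon,u\epsilon):\epsilon\in\Gamma\}\xrightarrow{\ \sim\ }\Ho_N^2(\mathcal{A},\Gamma)$, which contains both (b) and (c).

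The step I expect to be the real obstacle is (b) together with the precise coboundary relation in (c): one must be certain that the cocycles built on the small resolution, once transported into $C_N^*(\mathcal{A},\Gamma)$, represent every class of $\Ho_N^2(\mathcal{A},\Gamma)$, and that the $1$-cochains of $C_N^*(\mathcal{A},\Gamma)$ can trivialize $(\ov{\xi}^1_{t\delta_1},\ov{\xi}^2_{\delta_1,t\delta_1})$ for $\delta_1\in u\Gamma$ and nothing more. This is where the earlier computation of $\Ho_N^*(\mathcal{A},\Gamma)$ — presumably obtained by transferring, via the perturbation lemma, from the trivial linear cycle set on $\mathds{Z}/v\mathds{Z}$ to $\mathcal{A}$ and working through the resulting small resolution — carries the weight, and where the arithmetic of $u=p^\nu$, $v=p^\eta$ and $t=v/u$ under the standing hypothesis $2<u<v$ (which, in particular, excludes the $u=v$ case of Theorem~A) actually enters.
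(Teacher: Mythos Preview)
Your proposal is correct and follows exactly the paper's own route: the paper's proof of Theorem~B is the single line ``This follows from Remark~\ref{son iguales2}, Proposition~\ref{complemento 2} and \cite{LV}*{Theorem~5.8}'', and your argument simply unpacks these three ingredients (the identification of the two cochain complexes, the explicit cocycle family and coboundary criterion, and the Lebed--Vendramin correspondence between $\Ho_N^2$ and central extensions). Your points (a), (b), (c) are precisely what Proposition~\ref{complemento 2} supplies, and the passage to extensions is exactly \cite{LV}*{Theorem~5.8}.
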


\begin{thmx} Assume that $u=2$ and $v=4$. For each $\gamma,\gamma_1,\gamma'_1\in \Gamma$ such that $4\gamma_1=2\gamma$ and $2\gamma'_2=0$, let $\ov{\xi}^1_{\gamma},\ov{\xi}^2_{\gamma_1,\gamma'_1,\gamma}\colon \frac{\mathds{Z}} {4\mathds{Z}}\times \frac{\mathds{Z}}{v\mathds{Z}}\to \Gamma$ be the maps defined by
$$
\ov{\xi}^1_{\gamma}(\imath_1,\imath_2)\coloneqq \xi^1_{\gamma}([g^{\imath_1}\ot g^{\imath_2}])\quad\text{and}\quad \ov{\xi}^2_{\gamma_1,\gamma'_1,\gamma}(2\imath+\jmath,\imath_1)\coloneqq \xi^2_{\gamma_1,\gamma'_1,\gamma}(g^{2\imath+\jmath}\ot g^{\imath_1}),
$$
where $\xi^1_{\gamma}$ and $\xi^2_{\gamma_1,\gamma'_1,\gamma}$ are as above of Proposition~\ref{complemento 2 p=2}, $0\le\imath,\jmath<2$. The following facts hold:

\begin{enumerate}

\item $\Gamma\times \frac{\mathds{Z}}{v\mathds{Z}}$ is a linear cycle set via
$$
(c,\imath)+(c',\imath') \coloneqq \bigl(c+c'+\ov{\xi}^1_{\gamma}(\imath,\imath'),\imath+\imath'\bigr)\quad\text{and}\quad (c,\imath)\cdot (c',\imath') \coloneqq \bigl(c'+\ov{\xi}^2_{\gamma_1,\gamma'_1,\gamma}(\imath,\imath'),\imath\cdot \imath'\bigr).
$$
Following \cite{LV} we denoted this linear cycle set by $\Gamma\oplus_{\ov{\xi}^2_{\gamma_1,\gamma'_1,\gamma},\ov{\xi}^1_{\gamma}} \frac{\mathds{Z}}{v\mathds{Z}}$. Moreover
$$
\begin{tikzpicture}
\begin{scope}[yshift=0cm,xshift=0cm]
\matrix(BPcomplex) [matrix of math nodes, row sep=0em, text height=1.5ex, text
depth=0.25ex, column sep=2.5em, inner sep=0pt, minimum height=5mm, minimum width =6mm]
{0 & \Gamma & \Gamma\oplus_{\ov{\xi}^2_{\gamma_1,\gamma'_1,\gamma},\ov{\xi}^1_{\gamma}} \frac{\mathds{Z}}{v\mathds{Z}} & \frac{\mathds{Z}}{v\mathds{Z}} & 0,\\};
\draw[->] (BPcomplex-1-1) -- node[above=1pt,font=\scriptsize] {} (BPcomplex-1-2);
\draw[->] (BPcomplex-1-2) -- node[above=1pt,font=\scriptsize] {$\iota$} (BPcomplex-1-3);
\draw[->] (BPcomplex-1-3) -- node[above=1pt,font=\scriptsize] {$\pi$} (BPcomplex-1-4);
\draw[->] (BPcomplex-1-4) -- node[above=1pt,font=\scriptsize] {} (BPcomplex-1-5);
\end{scope}
\end{tikzpicture}
$$
where $\iota$ and $\pi$ are the evident maps, is a central extension of $(\mathds{Z}/v\mathds{Z};\cdot)$ by $\Gamma$ in the sense of~\cite{LV}*{Def\-inition~5.5}.

\item The extension associated with $(\ov{\xi}_{\gamma}^1,\ov{\xi}_{\gamma_1,\gamma'_1,\gamma}^2)$ and $(\ov{\xi}_{\ov{\gamma}}^1,\ov{\xi}_{\ov{\gamma}_1, \ov{\gamma}'_1,\ov{\gamma}}^2)$ are equivalent if and only if $\gamma_1-\ov{\gamma}_1\in 2 \Gamma$, $\gamma-\ov{\gamma}=2(\gamma_1-\ov{\gamma}_1)$ and $\ov{\gamma}'_1=\gamma'_1$.; and each central extension of $(\mathds{Z}/v\mathds{Z};\cdot)$ by $\Gamma$, is equivalent to one of these.

\end{enumerate}
\end{thmx}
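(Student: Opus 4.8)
The plan is to deduce the statement from the general correspondence of~\cite{LV} between central extensions of a linear cycle set $\mathcal{A}$ by $\Gamma$ and the second cohomology $\Ho_N^2(\mathcal{A},\Gamma)$, combined with the explicit cocycle computations carried out in the preceding sections — specifically Proposition~\ref{complemento 2 p=2}. Recall from~\cite{LV} that a pair $(\xi^2,\xi^1)$ satisfying the $2$-cocycle conditions in the complex $\bigl(C_N^*(\mathcal{A},\Gamma),\partial^*\bigr)$ produces a linear cycle set structure on $\Gamma\times A$, written $\Gamma\oplus_{\xi^2,\xi^1}A$, for which $\Gamma\hookrightarrow\Gamma\oplus_{\xi^2,\xi^1}A\twoheadrightarrow A$ is a central extension; that two such extensions are equivalent exactly when the corresponding cocycles are cohomologous; and that every central extension arises in this way.

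First I would observe that $\ov{\xi}^1_{\gamma}$ and $\ov{\xi}^2_{\gamma_1,\gamma'_1,\gamma}$ are simply the cocycles $\xi^1_{\gamma}$ and $\xi^2_{\gamma_1,\gamma'_1,\gamma}$ of Proposition~\ref{complemento 2 p=2} transported along the bijection $\imath\mapsto g^{\imath}$ to functions on $\mathds{Z}/v\mathds{Z}$. Since that proposition tells us the pair $\bigl(\xi^2_{\gamma_1,\gamma'_1,\gamma},\xi^1_{\gamma}\bigr)$ is a $2$-cocycle precisely under the constraints $4\gamma_1=2\gamma$ and $2\gamma'_1=0$ appearing in the hypothesis, Part~(1) is then immediate: the displayed formulas are nothing but the general formulas for $\Gamma\oplus_{\xi^2,\xi^1}A$ specialised to this cocycle, and the exact sequence is its associated central extension in the sense of~\cite{LV}.

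For Part~(2) I would, on one side, rephrase equivalence of the two extensions as the requirement that $\bigl(\xi^2_{\gamma_1,\gamma'_1,\gamma}-\xi^2_{\ov{\gamma}_1,\ov{\gamma}'_1,\ov{\gamma}},\,\xi^1_{\gamma}-\xi^1_{\ov{\gamma}}\bigr)$ be a $2$-coboundary; and, on the other side, use the explicit description of $B_N^2(\mathcal{A},\Gamma)$ obtained from the perturbation-lemma computation of the earlier sections. Comparing the two yields the relations $\gamma_1-\ov{\gamma}_1\in 2\Gamma$, $\gamma-\ov{\gamma}=2(\gamma_1-\ov{\gamma}_1)$ and $\ov{\gamma}'_1=\gamma'_1$. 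The final assertion — that every central extension is equivalent to one in our list — reduces to checking that $(\gamma,\gamma_1,\gamma'_1)\mapsto\bigl[\xi^2_{\gamma_1,\gamma'_1,\gamma},\xi^1_{\gamma}\bigr]$ exhausts $\Ho_N^2(\mathcal{A},\Gamma)$, which is exactly what Proposition~\ref{complemento 2 p=2} provides, since it presents these cocycles as a complete set of representatives.

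The main obstacle is the arithmetic of the prime $2$ with $v=4$: in contrast to the cases $u=v$ and $2<u<v$, here $\Ho_N^2$ picks up an additional $2$-torsion parameter $\gamma'_1$ with $2\gamma'_1=0$, which is the reason a separate Proposition~\ref{complemento 2 p=2} is needed and why the equivalence relation in Part~(2) couples the parameters in the form $\gamma-\ov{\gamma}=2(\gamma_1-\ov{\gamma}_1)$. Concretely, the difficulty lies in tracking exactly how $\xi^2_{\gamma_1,\gamma'_1,\gamma}$ moves modulo coboundaries as $(\gamma,\gamma_1,\gamma'_1)$ varies; once this bookkeeping — carried out in the earlier propositions — is in place, the remaining verifications for Theorem~C are routine specialisations.
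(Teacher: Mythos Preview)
Your proposal is correct and follows essentially the same approach as the paper: the paper's proof is the one-line ``This follows from Remark~\ref{son iguales2}, Proposition~\ref{complemento 2 p=2} and \cite{LV}*{Theorem~5.8}'', and you have simply unpacked these three ingredients --- the identification $\wh{C}_N^*\simeq C_N^*$, the explicit cocycle representatives and their cohomologous relations, and the Lebed--Vendramin correspondence between $\Ho_N^2(\mathcal{A},\Gamma)$ and equivalence classes of central extensions. Your additional commentary about the extra $2$-torsion parameter $\gamma'_1$ is accurate context but not needed for the proof itself, since all of that work is already absorbed into Proposition~\ref{complemento 2 p=2} (and Theorem~\ref{caso t=u=p} behind it).
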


In order to prove these results we first compute the normalized full linear cycle set cohomology $\Ho_N^2(\mathcal{A},\Gamma)$. This is done in Theorems~\ref{caso t=1}, \ref{caso 1<t<u} and~\ref{caso t=u=p}.

\section{Preliminaries}\label{section:Preliminaries}

In this paper we work in the category of abelian groups, all the maps are $\mathds{Z}$-linear, $\ot$ means $\ot_{\mathds{Z}}$ and $\Hom$ means $\Hom_{\mathds{Z}}$.

\subsection{Group homology}\label{Hochschild homology of algebras}
Let $G$ be a group, let $D\coloneqq \mathds{Z}[G]$ and let $\ov{D}\coloneqq D/\mathds{Z}1$. We call $\Ss_.(G)$ the simplicial complex of right $D$-modules with objects $\Ss_n(G)\coloneqq D^{\ot n+1}$, face maps $\mu_i\colon \Ss_n(G) \to \Ss_{n-1}(G)$ and degeneracy maps $\ep_i\colon\Ss_n(G) \to \Ss_{n+1}(G)$ ($i=0,\dots,n$), defined by:
\begin{align*}
&\mu_i(x_1\ot\cdots \ot x_{n+1}) \coloneqq \begin{cases} x_2\ot\cdots \ot x_{n+1} &\text{if $i=0$,}\\
x_1\ot \cdots \ot x_ix_{i+1}\ot\cdots \ot x_{n+1} &\text{if $0<i\le n$,} \end{cases}\\
&\epsilon_i(x_1\ot\cdots \ot x_{n+1}) \coloneqq x_1\ot \cdots \ot x_{\imath}\ot 1\ot x_{\imath+1}\ot\cdots \ot x_{n+1}.
\end{align*}
The chain complex associated with $\Ss_.(G)$ is the {\em bar resolution} $(D^{\ot *+1},b'_*)$ of the trivial right $D$-module $\mathds{Z}$, and the chain complex $(\ov{D}^{\ot *}\ot D,b'_*)$, obtained dividing  $(D^{\ot *+1},b'_*)$ by the subcomplex generated by images of the degeneracy maps is the {\em bar normalized resolution} of $\mathds{Z}$.

\smallskip

Let $\Upsilon$ be the family of all the epimorphism of right $D$-modules which split as morphisms of abelian groups. We say that a right $D$-module $X$ is {\em $\Upsilon$-relative projective} if for each $f\colon Y_1\to Y_2$ in $\Upsilon$ and each right $D$-module map $g\colon X\to Y_2$, there exists a right $D$-module map $h\colon X\to Y_1$ such that $g=f\xcirc h$. It is well known that a right $D$-module $X$ is $\Upsilon$-relative projective if and only if there exists an abelian group $X'$ such that $X$ is a direct sum of $X'\ot D$. A complex of right $D$-modules $(X_*,d_*)$ is a {\em $\Upsilon$-relative projective resolution of $\mathds{Z}$} if each $X_n$ is $\Upsilon$-relative projective, and there exists a right $D$-module morphism $\pi\colon X_0 \to \mathds{Z}$ such that
$$
\begin{tikzpicture}
\begin{scope}[yshift=0cm,xshift=0cm]
\matrix(BPcomplex) [matrix of math nodes, row sep=0em, text height=1.5ex, text
depth=0.25ex, column sep=2.5em, inner sep=0pt, minimum height=5mm, minimum width =6mm]
{\mathds{Z} & X_0 & X_1 & X_2 & X_3 & X_4 &\cdots,\\};
\draw[<-] (BPcomplex-1-1) -- node[above=1pt,font=\scriptsize] {$\pi$} (BPcomplex-1-2);
\draw[<-] (BPcomplex-1-2) -- node[above=1pt,font=\scriptsize] {$d_1$} (BPcomplex-1-3);
\draw[<-] (BPcomplex-1-3) -- node[above=1pt,font=\scriptsize] {$d_2$} (BPcomplex-1-4);
\draw[<-] (BPcomplex-1-4) -- node[above=1pt,font=\scriptsize] {$d_3$} (BPcomplex-1-5);
\draw[<-] (BPcomplex-1-5) -- node[above=1pt,font=\scriptsize] {$d_4$} (BPcomplex-1-6);
\draw[<-] (BPcomplex-1-6) -- node[above=1pt,font=\scriptsize] {$d_5$} (BPcomplex-1-7);
\end{scope}
\end{tikzpicture}
$$
is contractil as complex of abelian groups. The complex $(\ov{D}^{\ot *}\ot D,b'_*)$ is an $\Upsilon$-relative projective~res\-o\-lu\-tion of $\mathds{Z}$. Let $\pi\colon D\to \mathds{Z}$ be the augmentation map. A contracting homotopy of
$$
\begin{tikzpicture}
\begin{scope}[yshift=0cm,xshift=0cm]
\matrix(BPcomplex) [matrix of math nodes,row sep=0em, column sep=2.5em, text height=1.5ex, text
depth=0.25ex , inner sep=3pt, minimum height=5mm, minimum width =6mm]
{\mathds{Z} & D & \ov{D}\ot D& \ov{D}^{\ot 2}\ot D &\ov{D}^{\ot 3}\ot D &\cdots,\\};
\draw[<-] (BPcomplex-1-1) -- node[above=1pt,font=\scriptsize] {$\pi$} (BPcomplex-1-2);
\draw[<-] (BPcomplex-1-2) -- node[above=1pt,font=\scriptsize] {$b'_1$} (BPcomplex-1-3);
\draw[<-] (BPcomplex-1-3) -- node[above=1pt,font=\scriptsize] {$b'_2$} (BPcomplex-1-4);
\draw[<-] (BPcomplex-1-4) -- node[above=1pt,font=\scriptsize] {$b'_3$} (BPcomplex-1-5);
\draw[<-] (BPcomplex-1-5) -- node[above=1pt,font=\scriptsize] {$b'_4$} (BPcomplex-1-6);
\end{scope}
\end{tikzpicture}
$$
as a complex of abelian groups, is the degree~$1$ map~$\xi_*$, given by $\xi_{n+1}(\bx)\coloneqq (-1)^{n+1}\bx\ot 1$ for $\bx\in \ov{D}^{\ot n}\ot D$. Using relative projective resolutions, a theory of relative derived functors can be developed, which is similar to the standard one (see \cite{H}). Thus, we can define the group homology of $G$ with coefficients in a left $D$-module $M$ as the $\Tor$ relative to the family of epimorphisms $\Upsilon$. Consequently, the {\em group homology $\Ho_*(G,M)$, of $G$ with coefficients in $M$}, is the homology of $(D\ot \ov{D}^{\ot *},b'_*)\ot_D M$. There are canonical~iden\-ti\-fi\-ca\-tions $\ov{\digamma}_{\!n}\colon \ov{D}^{\ot n}\ot M \to \bigl(\ov{D}^{\ot n}\ot D\bigr)\ot_D M$, given by $\ov{\digamma}_{\!n}(\bx\ot m)\coloneqq (\bx\ot 1)\ot_D m$. Using them we obtain that $\bigl(\ov{D}^{\ot *}\ot D,b'_*\bigr)\ot_D M\simeq \bigl(\ov{D}^{\ot *}\ot M,b_*\bigr)$, where
\begin{align*}
b_n(x_1\ot\cdots\ot x_n\ot m)&\coloneqq x_2\ot \cdots\ot x_n\\
& + \sum_{i=1}^{n-1} (-1)^{\imath} x_1\ot \cdots \ot x_{\imath-1}\ot x_{\imath}x_{\imath+1}\ot x_{\imath+2}\ot\cdots\ot x_n\ot m\\
& + (-1)^n x_1\ot\cdots\ot x_{n-1}\ot x_nm.
\end{align*}
We call $\cramped{\bigl(\ov{D}^{\ot *}\ot M,b_*\bigr)}$ the canonical normalized complex of $G$ with coefficients in $M$. Since the right~$D$-mo\-dules $D^{\ot n+1}$ and $\ov{D}^{\ot n}\ot D$ are projective, the group homology can be defined using the usual functor~$\Tor$. The main purpose of our comment on relative derived functors above is to make subsection~\ref{una resolucion} and the reference~\cite{GG} more understandable.

\subsection{Linear cycle sets}
A linear cycle set $\mathcal{A} \coloneqq (A;\cdot)$ is an abelian additive group $A$, endowed with a binary operation $\cdot$ such that the left translations $a\mapsto a'\cdot a$ are permutations of $A$ and the following conditions are fulfilled
\begin{align}
&(a\cdot a')\cdot (a\cdot a'') = (a'\cdot a)\cdot (a'\cdot a''),\label{condicion1}\\
&a\cdot (a'+a'') = a\cdot a' +  a\cdot a'',\label{condicion2}\\
& (a+a')\cdot a'' = (a\cdot a')\cdot (a\cdot a'').\label{condicion3}
\end{align}
We will use multiplicative notation. Let $G_A\coloneqq \{X^a:a\in A\}$, endowed with the group structure given by $X^aX^{a'} \coloneqq X^{a+a'}$. We set $X^a\cdot X^{a'}\coloneqq X^{a\cdot a'}$ and $X^a\cdot a'\coloneqq a\cdot a'$.

\subsubsection{The normalized full linear cycle set (co)homology}\label{The full linear cycle set (co)homology}
In \cite{LV}*{Section~4} the authors introduce theories of (co)homology, $\Ho^N_*(\mathcal{A},\Gamma)$ and $\Ho_N^*(\mathcal{A},\Gamma)$, that we recall now. For each $s\ge 1$, we let $\sh(\ov{D}^{\ot s})$ denote the subgroup of $\ov{D}^{\ot s}$ generated by the shuffles
$$
\sum_{\sigma\in \sh_{l,s-l}} \sg(\sigma) d_{\sigma^{-1}(1)}\ot\cdots\ot d_{\sigma^{-1}(s)},
$$
taken for all $1\le l < s$ and $d_k\in\ov{D}$. Here $\sh_{l,s-l}$ is the subset of all the permutations $\sigma$ of $s$ elements satisfying $\sigma(1)<\cdots<\sigma(l)$ and $\sigma(l+1)<\cdots<\sigma(s)$. For each $r\ge 0$ and $s\ge 1$, let $\wh{C}^N_{rs}(\mathcal{A},\mathds{Z})\coloneqq \ov{D}^{\ot r}\ot \ov{M}(s)$, where $\ov{M}(s)\coloneqq \frac{\ov{D}^{\ot s}}{\sh(\ov{D}^{\ot s})}$. Given $g_1,\dots,g_s\in G_A$, we let $[g_1\ot\cdots\ot g_s]$ denote the class of $g_1\ot\cdots\ot g_s$ in $\ov{M}(s)$. Consider the double complex $\bigl(\wh{C}^N_{**}(\mathcal{A},\mathds{Z}),\partial^{\mathrm{h}}_{**},\partial^{\mathrm{v}}_{**}\bigr)$, where
\begin{align*}
& \partial^{\mathrm{h}}_{rs}(g_1\ot\cdots \ot g_r \ot [g_{r+1} \ot\cdots \ot g_{r+s}]) \coloneqq g_1\cdot g_2\ot \cdots\ot g_1\cdot g_r\ot [g_1\cdot g_2\ot\cdots\ot g_1\cdot g_{r+s}]\\
&\phantom{\partial^{\mathrm{h}}_{rs}(g_1\ot\cdots \ot g_r)} + \sum_{\jmath=1}^{r-1} (-1)^{\jmath} g_1\ot\cdots\ot g_{\jmath-1}\ot g_{\jmath}g_{\jmath+1}\ot g_{\jmath+2}\ot g_r\ot [g_{r+1}\ot\cdots\ot g_{r+s}]\\
&\phantom{\partial^{\mathrm{h}}_{rs}(g_1\ot\cdots \ot g_r)} + (-1)^r g_1\ot\cdots\ot g_{r-1}\ot [g_{r+1}\ot\cdots \ot g_{r+s}]\\
\shortintertext{and}
& \partial^{\mathrm{v}}_{rs}(g_1\ot\cdots \ot g_r \ot [g_{r+1} \ot\cdots \ot g_{r+s}]) \coloneqq (-1)^{r+1} g_1\ot\cdots \ot g_r\ot [g_{r+2} \ot\cdots \ot g_{r+s}]\\
&\phantom{\partial^{\mathrm{h}}_{rs}(g_1\ot\cdots \ot g_r)} + \sum_{\jmath=r+1}^{r+s-1} (-1)^{\jmath+1} g_1\ot\cdots\ot g_r\ot [g_{r+1}\ot\cdots\ot g_{\jmath-1}\ot g_{\jmath}g_{\jmath+1}\ot g_{\jmath+2}\ot\cdots\ot g_{r+s}]\\
&\phantom{\partial^{\mathrm{h}}_{rs}(g_1\ot\cdots \ot g_r)} + (-1)^{r+s+1} g_1\ot\cdots\ot g_r\ot [g_{r+1}\ot\cdots\ot g_{r+s-1}].
\end{align*}
Recall that the total complex of $\bigl(\wh{C}^N_{**}(\mathcal{A},\mathds{Z}),\partial^{\mathrm{h}}_{**},\partial^{\mathrm{v}}_{**}\bigr)$ is the chain complex $\bigl(\wh{C}^N_*(\mathcal{A},\mathds{Z}),\partial_*\bigr)$, where
$$
\wh{C}^N_n(\mathcal{A},\mathds{Z})\coloneqq \bigoplus_{r+s=n} \wh{C}^N_{rs}(\mathcal{A},\mathds{Z})\quad\text{and}\quad \partial_n\vert_{\wh{C}^N_{rs}(\mathcal{A},\mathds{Z})} \coloneqq \partial^{\mathrm{h}}_{rs}+\partial^{\mathrm{v}}_{rs}.
$$
Let $\Gamma$ be an abelian additive group. The {\em normalized full homology groups} and the {\em normalized full cohomology groups} of $\mathcal{A}$ with coefficients in $\Gamma$ are the homology groups of $\wh{C}^N_*(\mathcal{A},\Gamma)\coloneqq \Gamma\ot (\wh{C}^N_*(\mathcal{A},\mathds{Z}),\partial_*)$ and the cohomology groups of $\wh{C}_N^*(\mathcal{A},\Gamma)\coloneqq \Hom\bigl((\wh{C}^N_*(\mathcal{A},\mathds{Z}),\partial_*),\Gamma\bigr)$, respectively. We let $\wh{\Ho}^N_*(\mathcal{A},\Gamma)$ and $\wh{\Ho}_N^*(\mathcal{A},\Gamma)$ denote the full normalized homology and the full normalized cohomology, of $\mathcal{A}$ with coefficients in $\Gamma$.

\begin{remark}\label{son iguales2} The complex $\wh{C}^N_*(\mathcal{A},\Gamma)$ is not the complex $\bigl(C^N_*(\mathcal{A},\Gamma),\partial_*\bigr)$ introduced in \cite{LV}*{Def\-i\-nition~4.2}, but they are isomorphic via the maps $\Xi_{rs}\colon \wh{C}^N_{rs}(\mathcal{A},\Gamma)\to C^N_{rs}(\mathcal{A},\Gamma)$, given by
$$
\Xi_{rs}(X^{a_1}\ot\cdots\ot X^{a_s}\ot [X^{a_{s+1}}\ot\cdots \ot X^{a_{r+s}}])\coloneqq (a_1,\dots,a_s,a_{s+1},\dots,a_{r+s}).
$$
Similarly, $\wh{C}_N^*(\mathcal{A},\Gamma)\simeq \bigl(C_N^*(\mathcal{A},\Gamma), \partial^*\bigr)$, and so $\wh{\Ho}^N_*(\mathcal{A},\Gamma) = \Ho^N_*(\mathcal{A},\Gamma)$ and $\wh{\Ho}_N^*(\mathcal{A},\Gamma) = \Ho_N^*(\mathcal{A},\Gamma)$.
\end{remark}

\subsection{The perturbation lemma}
Next, we recall the perturbation lemma. We present the version given in~\cite{C}.

\smallskip

A {\em special deformation retract}
\begin{equation}\label{(a)}
\begin{tikzpicture}[baseline=(current bounding box.center)]
\draw (-0.1,0) node[] {$(X_*,d_*)$};\draw (2.6,0) node {$(C_*,\partial_*)$};\draw[<-] (0.6,0.12) -- node[above=-2pt,font=\scriptsize] {$p_*$}(1.9,0.12);\draw[->] (0.6,-0.12) -- node[below=-2pt,font=\scriptsize] {$\imath_*$}(1.9,-0.12);\draw (4.2,0) node {$C_*$};\draw (6.7,0) node {$C_{*+1}$,};\draw[->] (4.4,0) -- node[above=-2pt,font=\scriptsize] {$h_{*+1}$}(6.1,0);
\end{tikzpicture}
\end{equation}
consists of the following:

\begin{enumerate}

\smallskip

\item Chain complexes $(X,d)$, $(C,\partial)$ and morphisms $\imath$, $p$ between them, such that $p\xcirc \imath = \ide$.

\smallskip

\item A homotopy $h$ from $\imath\xcirc p$ to $\ide$, such that $h\xcirc \imath = 0$, $p\xcirc h = 0$ and $h\xcirc h = 0$.
\end{enumerate}

\smallskip

A {\em perturbation} of~\eqref{(a)} is a map $\delta_*\colon C_*\to C_{*-1}$ such that $(\partial+\delta)^2 = 0$. We call it {\em small} if $\ide -\delta\xcirc h$ is invertible. In this case we write $A \coloneqq  (\ide -\delta\xcirc h)^{-1}\xcirc\delta$ and we consider the diagram
\begin{equation}\label{(b)}
\begin{tikzpicture}[baseline=(current bounding box.center)]
\draw (-0.1,0) node[] {$(X_*,d_*^1)$};\draw (3,0) node {$(C_*,\partial_*+\delta_*)$};\draw[<-] (0.6,0.12) -- node[above=-2pt,font=\scriptsize] {$p_*^1$}(1.9,0.12);\draw[->] (0.6,-0.12) -- node[below=-2pt,font=\scriptsize] {$\imath_*^1$}(1.9,-0.12);\draw (5.1,0) node {$C_*$};\draw (7.3,0) node {$C_{*+1}$,};\draw[->] (5.3,0) -- node[above=-2pt,font=\scriptsize] {$h_{*+1}^1$}(6.8,0);
\end{tikzpicture}
\end{equation}
where $d^1\coloneqq d + p\xcirc A\xcirc i$, $i^1\coloneqq  i + h\xcirc A\xcirc i$, $p^1\coloneqq  p + p\xcirc A\xcirc h$ and $h^1\coloneqq  h + h\xcirc A\xcirc h$.

\smallskip

In all the cases considered in this paper the morphism $\delta\xcirc h$ is locally nilpotent (in other words, for all $x\in C_*$ there exists $n\in \mathds{N}$ such that $(\delta\xcirc h)^n(x)=0$). Consequently, $(\ide -\delta\xcirc h)^{-1} =\sum_{n=0}^{\infty} (\delta\xcirc h)^n$.

\begin{theorem}[{\cite{C}}]\label{lema de perturbacion} If $\delta$ is a small perturbation of~\eqref{(a)}, then the diagram~\eqref{(b)} is a special deformation retract.
\end{theorem}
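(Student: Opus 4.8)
The statement to prove is the Perturbation Lemma (Theorem~\ref{lema de perturbacion}): given a small perturbation $\delta$ of the special deformation retract~\eqref{(a)}, the data in~\eqref{(b)} again form a special deformation retract. Let me sketch a plan.

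\medskip

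\textbf{Plan of proof.} The proof is a direct verification, carried out by manipulating the five defining relations of a special deformation retract for the perturbed data. Write $A = (\ide - \delta h)^{-1}\delta$, so that $A = \delta + A\delta h = \delta + \delta h A$ (both identities follow from $(\ide-\delta h)A = \delta = A(\ide-\delta h)$, expanding $\ide - h\delta$ on the other side is also needed and follows similarly from the geometric series expression $A = \sum_{n\ge 0}(\delta h)^n\delta = \sum_{n\ge0}\delta(h\delta)^n$). The first step is to record these algebraic identities for $A$, together with the consequence $hA = hAh\delta' $-type relations obtained from $h^2=0$, $hi=0$, $ph=0$. Then I would check the six required properties in turn:

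\begin{enumerate}
\item $p^1 i^1 = \ide$: expand $(p + pAh)(i + hAi) = pi + phAi + pAhi + pAhhAi$; the last three terms vanish using $ph=0$, $hi=0$, $h^2=0$ respectively, leaving $pi=\ide$.
\item $d^1$ is a differential, i.e.\ $(d^1)^2=0$: this is the only computation of real substance. One uses $d^2=0$, the fact that $\partial+\delta$ squares to zero (so $\partial\delta + \delta\partial + \delta^2 = 0$), the homotopy relation $ip - \ide = \partial h + h\partial$, and the side conditions. The identity $(d + pAi)^2 = 0$ unfolds into $pAi\cdot pAi + d\,pAi + pAi\,d$; one rewrites $i\,p = \ide + \partial h + h\partial$ inside the middle, uses $p\partial = d p$ and $\partial i = i d$ (chain maps), and the defining relation for $A$ to telescope everything to $0$.
\item $h^1$ is a homotopy from $i^1 p^1$ to $\ide$: i.e.\ $(\partial+\delta)h^1 + h^1(\partial+\delta) = i^1 p^1 - \ide$. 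Expand $h^1 = h + hAh$, use $\partial h + h\partial = ip-\ide$, push $\delta$ past $h$ using the $A$-relations, and match against $i^1 p^1 - \ide = (ip-\ide) + (\text{terms with }A)$.
\item The three side conditions $h^1 i^1 = 0$, $p^1 h^1 = 0$, $h^1 h^1 = 0$: each expands into four terms that collapse via $hi=0$, $ph=0$, $h^2=0$ together with $phAh\cdot(\cdots)$ reductions. For instance $h^1 i^1 = (h+hAh)(i+hAi) = hi + hhAi + hAhi + hAhhAi = 0$ termwise.
\item One also should confirm $i^1$, $p^1$ are chain maps for the new differentials, but this is subsumed in checking (2) and (3) consistently; alternatively it follows from $\partial^1 i^1 = i^1 d^1$, which one derives from the $A$-identity $A = \delta + \delta h A$ after multiplying by $i$ on the right.
\end{enumerate}

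\textbf{Main obstacle.} The genuinely delicate step is~(2), showing $(d^1)^2 = 0$ — all the others are ``termwise annihilation'' arguments, but the differential identity requires using \emph{every} available relation (the two chain-map identities, the homotopy identity $ip-\ide=\partial h+h\partial$, the perturbed-square identity $\partial\delta+\delta\partial+\delta^2=0$, and the recursive definition of $A$) in a coordinated telescoping cancellation. In practice one proves the sharper statement that $(\ide - h\delta)^{-1}$, $i$, $p$, $h$, $A$ satisfy a closed system of ``perturbed'' side conditions, from which all six properties drop out; the cleanest route is to verify that $A$ itself satisfies $A\partial + \partial A + A^2 = 0$ on the subquotient where it acts (the ``Basic Perturbation'' identity), and then substitute. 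Since the paper only ever needs the case where $\delta h$ is locally nilpotent, all infinite sums are finite on each element, so no convergence issue arises and every rearrangement above is legitimate. I would therefore present the proof as: (i) establish the $A$-identities; (ii) verify (1) and the three side conditions by termwise cancellation; (iii) verify the chain-map and homotopy identities; (iv) conclude.
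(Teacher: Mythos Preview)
The paper does not supply a proof of this theorem at all: it is quoted verbatim from \cite{C} (Crainic's note on the perturbation lemma) and used as a black box, with the next result (Proposition~\ref{para complejos dobles}) invoking it immediately. So there is no ``paper's own proof'' to compare against.

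That said, your sketch is the standard verification-by-identities proof of the Basic Perturbation Lemma, and it is essentially what one finds in \cite{C} or any other reference. Your organization is sound: the termwise annihilations for $p^1i^1=\ide$ and the three side conditions are routine, and you correctly flag the differential identity $(d^1)^2=0$ (together with the chain-map and homotopy identities) as the place where all the relations have to interlock. One small remark: rather than treating ``$i^1,p^1$ are chain maps'' as subsumed in (2) and (3), it is cleaner to verify $(\partial+\delta)i^1 = i^1 d^1$ and $p^1(\partial+\delta) = d^1 p^1$ directly from the $A$-recursions, since these are logically prior to the homotopy identity. Nothing in your outline is wrong, and in the locally nilpotent setting the paper actually uses, every series is a finite sum on each element, so the manipulations are unproblematic.
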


\begin{proposition}\label{para complejos dobles} Consider morphisms of double complexes
\begin{equation}\label{(ad1)}
\begin{tikzpicture}[baseline=(current bounding box.center)]
\draw (-0.5,0) node[] {$(X_{**},d^{\mathrm{h}}_{**},d^{\mathrm{v}}_{**})$};\draw (3.2,0) node {$(C_{**},\partial^{\mathrm{h}}_{**},\partial^{\mathrm{v}}_{**})$,};\draw[<-] (0.6,0.12) -- node[above=-2pt,font=\scriptsize] {$p_{**}$}(1.9,0.12);\draw[->] (0.6,-0.12) -- node[below=-2pt,font=\scriptsize] {$\imath_{**}$}(1.9,-0.12);
\end{tikzpicture}
\end{equation}
such that $p_{**}\xcirc \imath_{**} = \ide$. Assume that in each row $s$ we have a special~de\-for\-ma\-tion retract
\begin{equation}\label{(aa1)}
\begin{tikzpicture}[baseline=(current bounding box.center)]
\draw (-0.2,0) node[] {$(X_{*s},d^{\mathrm{h}}_{*s})$};\draw (2.8,0) node {$(C_{*s},\partial^{\mathrm{h}}_{*s})$};\draw[<-] (0.6,0.12) -- node[above=-2pt,font=\scriptsize] {$p_{*s}$}(1.9,0.12);\draw[->] (0.6,-0.12) -- node[below=-2pt,font=\scriptsize] {$\imath_{*s}$}(1.9,-0.12);\draw (4.6,0) node {$C_{*s}$};\draw (7.1,0) node {$C_{*+1,s}$,};\draw[->] (5,0) -- node[above=-2pt,font=\scriptsize] {$h_{*+1,s}$}(6.5,0);
\end{tikzpicture}
\end{equation}
endowed with a small perturbation $\delta^{\mathrm{h}}_{*s}\colon C_{*s}\to C_{*-1,s}$. Let $A_{**} \coloneqq  (\ide -\delta^{\mathrm{h}}_{**}\xcirc h_{**})^{-1}\xcirc\delta^{\mathrm{h}}_{**}$ and consider the di\-a\-gram
\begin{equation}\label{(ad2)}
\begin{tikzpicture}[baseline=(current bounding box.center)]
\draw (-0.5,0) node[] {$(X_{**},d^{h1}_{**},d^{\mathrm{v}}_{**})$};\draw (3.5,0) node {$(C_{**},\partial^{\mathrm{h}}_{**}+\delta^{\mathrm{h}}_{**},\partial^{\mathrm{v}}_{**})$};\draw[<-] (0.6,0.12) -- node[above=-2pt,font=\scriptsize] {$p^1_{**}$}(1.9,0.12);\draw[->] (0.6,-0.12) -- node[below=-2pt,font=\scriptsize] {$\imath^1_{**}$}(1.9,-0.12); \draw (6.2,0) node {$C_{**}$};\draw (8.7,0) node {$C_{*+1,*}$,};\draw[->] (6.5,0) -- node[above=-2pt,font=\scriptsize] {$h^1_{*+1,*}$,}(8,0);
\end{tikzpicture}
\end{equation}
where $d^{h1}\coloneqq d^{\mathrm{h}} + p\xcirc A\xcirc \imath$, $\imath^1\coloneqq  \imath + h\xcirc A\xcirc \imath$, $p^1\coloneqq  p + p\xcirc A\xcirc h$ and $h^1\coloneqq  h + h\xcirc A\xcirc h$. The following facts hold:

\begin{enumerate}

\smallskip

\item The maps $\imath^1_{**}$ and $p^1_{**}$ are morphisms of double complexes such that $p^1_{**} \xcirc \imath^1_{**} = \ide$.

\smallskip

\item For each row $s$, the map $h^1_{*+1,s}$ is a homotopy from $\imath^1_{*s}\xcirc p^1_{*s}$ to $\ide$.
\end{enumerate}
\end{proposition}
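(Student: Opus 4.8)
The plan is to reduce everything to the one-complex Perturbation Lemma (Theorem~\ref{lema de perturbacion}) applied one row at a time, and then to upgrade the resulting row-wise statements to statements about the double complexes by checking compatibility with the vertical differentials.

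\emph{Step 1: the Perturbation Lemma, row by row.} Fix $s$. The perturbation $\delta^{\mathrm{h}}_{*s}$ of the special deformation retract~\eqref{(aa1)} is small in the sense of Theorem~\ref{lema de perturbacion}: by the remark just before that theorem $\delta^{\mathrm{h}}_{*s}\xcirc h_{*s}$ is locally nilpotent, so $\ide-\delta^{\mathrm{h}}_{*s}\xcirc h_{*s}$ is invertible with inverse $\sum_{n\ge 0}(\delta^{\mathrm{h}}_{*s}\xcirc h_{*s})^n$. Theorem~\ref{lema de perturbacion} then says that the row-$s$ part of~\eqref{(ad2)} is again a special deformation retract. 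Explicitly: $d^{h1}_{*s}$ is a differential, $\imath^1_{*s}$ and $p^1_{*s}$ are chain maps between $(X_{*s},d^{h1}_{*s})$ and $(C_{*s},\partial^{\mathrm{h}}_{*s}+\delta^{\mathrm{h}}_{*s})$ with $p^1_{*s}\xcirc\imath^1_{*s}=\ide$, and $h^1_{*+1,s}$ is a homotopy from $\imath^1_{*s}\xcirc p^1_{*s}$ to $\ide$. The last assertion, for all $s$, is exactly~(2); the others give $p^1_{**}\xcirc\imath^1_{**}=\ide$ and the commutation of $\imath^1_{**}$ and $p^1_{**}$ with the horizontal (perturbed) differentials.

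\emph{Step 2: compatibility with the vertical differential.} To finish~(1) one must check that $\imath^1_{**}$ and $p^1_{**}$ commute with $d^{\mathrm{v}}_{**}$ and $\partial^{\mathrm{v}}_{**}$, and --- so that the statement is meaningful --- that $(X_{**},d^{h1}_{**},d^{\mathrm{v}}_{**})$ and $(C_{**},\partial^{\mathrm{h}}_{**}+\delta^{\mathrm{h}}_{**},\partial^{\mathrm{v}}_{**})$ are double complexes. Write $A_{**}=\sum_{n\ge 0}(\delta^{\mathrm{h}}_{**}\xcirc h_{**})^n\xcirc\delta^{\mathrm{h}}_{**}$, so that $\imath^1_{**}=\imath_{**}+h_{**}\xcirc A_{**}\xcirc\imath_{**}$, $p^1_{**}=p_{**}+p_{**}\xcirc A_{**}\xcirc h_{**}$ and $d^{h1}_{**}=d^{\mathrm{h}}_{**}+p_{**}\xcirc A_{**}\xcirc\imath_{**}$. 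Since $\imath_{**}$ and $p_{**}$ already commute with the vertical differentials, it is enough that $h_{**}$ and $\delta^{\mathrm{h}}_{**}$ commute with $\partial^{\mathrm{v}}_{**}$ up to the sign prescribed by their bidegrees (for $\delta^{\mathrm{h}}_{**}$ this anticommutation is also exactly what makes the target a double complex). Granting this, $A_{**}$ likewise commutes with $\partial^{\mathrm{v}}_{**}$ up to sign --- it is a locally finite sum of composites of such maps --- and hence so do the three correction terms above; the vertical intertwining relations for $\imath^1_{**}$ and $p^1_{**}$ follow immediately, and $d^{h1}_{**}\xcirc d^{\mathrm{v}}_{**}+d^{\mathrm{v}}_{**}\xcirc d^{h1}_{**}=0$ follows as well, for instance from the injectivity of $\imath^1_{**}$ (which has left inverse $p^1_{**}$) together with $(\partial^{\mathrm{h}}_{**}+\delta^{\mathrm{h}}_{**})\xcirc\partial^{\mathrm{v}}_{**}+\partial^{\mathrm{v}}_{**}\xcirc(\partial^{\mathrm{h}}_{**}+\delta^{\mathrm{h}}_{**})=0$.

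The only real work is in Step~2: one has to keep the bidegree signs straight when commuting $h_{**}$, $\delta^{\mathrm{h}}_{**}$ and their composites past $\partial^{\mathrm{v}}_{**}$, and one has to justify commuting $\partial^{\mathrm{v}}_{**}$ termwise through the series $(\ide-\delta^{\mathrm{h}}_{**}\xcirc h_{**})^{-1}=\sum_{n\ge 0}(\delta^{\mathrm{h}}_{**}\xcirc h_{**})^n$ --- which is legitimate precisely because $\delta^{\mathrm{h}}_{**}\xcirc h_{**}$ is locally nilpotent, so that on each element only finitely many terms contribute.
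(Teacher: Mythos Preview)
Your argument is essentially correct, but it takes a different and more hands-on route than the paper's. Rather than working row by row and then patching in vertical compatibility, the paper passes to the total complexes $(X_*,d^t_*)$ and $(C_*,\partial^t_*)$, assembles the row-wise data into maps $\imath^t,p^t,h^t$ of total complexes (as direct sums over $r+s=n$), and applies the Perturbation Lemma \emph{once} to this total SDR with the single perturbation $\delta^t\coloneqq\bigoplus_s\delta^{\mathrm{h}}_{*s}$. Because $h_{**}$, $\delta^{\mathrm{h}}_{**}$ and hence $A_{**}$ all preserve the row index $s$, the perturbed maps $\imath^1,p^1,h^1,d^{h1}$ still respect the bigrading; the single total chain-map identity $(\partial^{\mathrm{h}}+\delta^{\mathrm{h}}+\partial^{\mathrm{v}})\,\imath^1=\imath^1\,(d^{h1}+d^{\mathrm{v}})$ then separates by bidegree into the horizontal and vertical intertwining relations simultaneously, and likewise for $p^1$ and for the double-complex identities on $X_{**}$. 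In other words, the paper gets your entire Step~2 for free.

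Both approaches, however, lean on the same tacit hypotheses you flagged with ``Granting this'': that $h_{**}$ and $\delta^{\mathrm{h}}_{**}$ anticommute with $\partial^{\mathrm{v}}_{**}$. In the paper's argument these are precisely what make the assembled total data a special deformation retract and make $\delta^t$ a genuine perturbation (i.e.\ $(\partial^t+\delta^t)^2=0$); the paper simply does not spell them out. Your caution here is justified, and in the applications later in the paper (cf.\ Proposition~\ref{basico para mor de comp dob}) these conditions are in fact verified.
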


\begin{proof} Let $(X_*,d^t_*)$ and $(C_*,\partial^t_*)$ be the total chain complexes of $(X_{**},d^{\mathrm{h}}_{**},d^{\mathrm{v}}_{**})$ and $(C_{**},\partial^{\mathrm{h}}_{**},\partial^{\mathrm{v}}_{**})$, respectively. We have an homotopy equivalence data
\begin{equation}\label{(aa2)}
\begin{tikzpicture}[baseline=(current bounding box.center)]
\draw (-0.1,0) node[] {$(X_*,d^t_*)$};\draw (2.6,0) node {$(C_*,\partial^t_*)$};\draw[<-] (0.6,0.12) -- node[above=-2pt,font=\scriptsize] {$p^t_*$}(1.9,0.12);\draw[->] (0.6,-0.12) -- node[below=-2pt,font=\scriptsize] {$\imath^t_*$}(1.9,-0.12);\draw (4.2,0) node {$C_*$};\draw (6.7,0) node {$C_{*+1}$,};\draw[->] (4.4,0) -- node[above=-2pt,font=\scriptsize] {$h^t_{*+1}$}(6.1,0);
\end{tikzpicture}
\end{equation}
where $\imath^t_*$, $p_*^t$ an $h_*^t$ are given by $\imath^t_n \coloneqq \bigoplus_{r+s = n} \imath_{rs}$, $p^t_n \coloneqq \bigoplus_{r+s = n} p_{rs}$ and $h^t_{n+1} \coloneqq \bigoplus_{r+s=n+1} h_{rs}$. Consider the small perturbation $\delta^t_*\colon C_*\to C_{*-1}$, given by $\delta^t_n\coloneqq \bigoplus_{r+s = n} \delta^h_{rs}$. The result follows immediately by applying the perturbation lemma to this case.
\end{proof}

\section{A complex for the group homology of cyclic groups}
Let $C_v$ be a cyclic group of order $v\in \mathds{N}$ and let $D\coloneqq \mathds{Z}[C_v]$. In this section we construct a chain complex suitable for our purposes, giving the group homology of $C_v$ with coefficients in an abelian group~$M$, considered as a left $D$-module via the trivial action. This complex is the complex $(X_*,d_*)$ in a special deformation retract as in~\eqref{(a)}, in which $(C_*,\partial_*)$ is the normalized bar complex of $C_v$ with coe\-fficients in $M$. It is natural to try to use the minimal resolution of $C_v$ in order to construct $(X_*,d_*)$, but this does not work because, in this case, the perturbation is not small. So we are forced to use a more involved complex.

\subsection{A resolution for a cyclic group}\label{una resolucion}
Let $v,u,t\in \mathds{N}$ such that $u>1$ and $ut=v$. Consider the cycle groups $C_v\coloneqq \langle g \rangle$, $C_u\coloneqq \langle x \rangle$ and $C_t\coloneqq \langle y \rangle$ of order $v$, $u$ and $t$, respectively. The group $C_v$ is isomorphic to the crossed product $C_u \rtimes_{\zeta}C_t$, in which $C_t$ acts trivially on $C_u$ and $\zeta$ is the cocycle given by
$$
\zeta(y^{\jmath},y^{\jmath'})\coloneqq \begin{cases} 1 &\text{if $\jmath+\jmath'<t$},\\ x &\text{otherwise,}\end{cases}
$$
where $0\le \jmath,\jmath'<t$. We recall that $C_u \rtimes_{\zeta}C_t=\{x^{\imath}w_{y^{\jmath}}: 0\le \imath < u \text{ and } 0\le \jmath<t\}$ endowed with~the~multpli\-ca\-tion map
$$
x^{\imath}w_{y^{\jmath}} \, x^{\imath'}w_{y^{\jmath'}}= x^{\imath+\imath'}\zeta(y^{\jmath},y^{\jmath'})w_{y^{\jmath+\jmath'}}\quad\text{ where $0\le \jmath,\jmath'<t$.}
$$
The map $f\colon C_u \rtimes_{\zeta}C_t \to C_v$, defined by $f(x^{\imath}w_{y^{\jmath}})\coloneqq g^{t\imath+\jmath}$, where $0\le \jmath<t$, is an group isomor\-phism.

\smallskip

Let $E\coloneqq \mathds{Z}[C_u \rtimes_{\zeta}C_t]$. For all $\alpha,\beta\ge 0$, let $Y_{\beta}\coloneqq \mathds{Z}[C_t]$ and $X_{\alpha\beta}\coloneqq E$. The groups $X_{\alpha\beta}$ are right~$E$-mo\-dules via the right regular action and the groups $Y_{\beta}$ are right $E$-modules via $y^lx^{\imath} w_{y^{\jmath}}\coloneqq  y^{\jmath+l}$. Consider the diagram of right $E$-modules and right $E$-module maps
$$
\begin{tikzpicture}
\begin{scope}[yshift=0cm,xshift=0cm]
\matrix(BPcomplex) [matrix of math nodes,row sep=2.5em, text height=1.5ex, text
depth=0.25ex, column sep=2.5em, inner sep=0pt, minimum height=5mm,minimum width =8mm]
{\vdots\\
Y_2 & X_{02} & X_{12} &\cdots\\
Y_1 & X_{01} & X_{11} &\cdots\\
Y_0 & X_{00} & X_{10} &\cdots,\\};
\draw[->] (BPcomplex-1-1) -- node[right=1pt,font=\scriptsize] {$\partial_3$} (BPcomplex-2-1);
\draw[->] (BPcomplex-2-1) -- node[right=1pt,font=\scriptsize] {$\partial_2$} (BPcomplex-3-1);
\draw[->] (BPcomplex-3-1) -- node[right=1pt, font=\scriptsize] {$\partial_1$} (BPcomplex-4-1);
\draw[<-] (BPcomplex-2-1) -- node[above=1pt,font=\scriptsize] {$\upsilon_2$} (BPcomplex-2-2);
\draw[<-] (BPcomplex-2-2) -- node[above=1pt,font=\scriptsize] {$d^0_{12}$} (BPcomplex-2-3);
\draw[<-] (BPcomplex-2-3) -- node[above=1pt,font=\scriptsize] {$d^0_{22}$} (BPcomplex-2-4);
\draw[<-] (BPcomplex-3-1) -- node[above=1pt,font=\scriptsize] {$\upsilon_1$} (BPcomplex-3-2);
\draw[<-] (BPcomplex-3-2) -- node[above=1pt,font=\scriptsize] {$d^0_{11}$} (BPcomplex-3-3);
\draw[<-] (BPcomplex-3-3) -- node[above=1pt,font=\scriptsize] {$d^0_{21}$} (BPcomplex-3-4);
\draw[<-] (BPcomplex-4-1) -- node[above=1pt,font=\scriptsize] {$\upsilon_0$} (BPcomplex-4-2);
\draw[<-] (BPcomplex-4-2) -- node[above=1pt,font=\scriptsize] {$d^0_{10}$} (BPcomplex-4-3);
\draw[<-] (BPcomplex-4-3) -- node[above=1pt,font=\scriptsize] {$d^0_{20}$} (BPcomplex-4-4);
\end{scope}
\end{tikzpicture}
$$
where $v_{\beta}(w_1)\coloneqq 1$ and
$$
\partial_{2\beta-1}(1)\coloneqq y-1,\quad \partial_{2\beta}(1)\coloneqq \sum_{l=0}^{t-1}y^l,\quad d_{2\alpha-1,\beta}^0(w_1)\coloneqq xw_1 - w_1\quad\text{and}\quad d_{2\alpha,\beta}^0(w_1)\coloneqq \sum_{l=0}^{u-1}x^lw_1.
$$
Clearly, the column and the rows of this diagram are chain complexes.

\begin{proposition}\label{las filas son contractiles} Each one of the rows of the above diagram is contractible as a complex of abelian groups. A contracting homotopy $\sigma^0_{0\beta}\colon Y_{\beta}\to X_{0\beta}$ and $\sigma^0_{\alpha+1,\beta}\colon X_{\alpha\beta}\to X_{\alpha+1,\beta}$ for $\alpha\ge 0$, of the $\beta$-th row, is given by
$$
\sigma^0_{0\beta}(y^{\jmath})\coloneqq w_{y^{\jmath}},\quad \sigma^0_{2\alpha-1,\beta}(x^{\imath}w_{y^{\jmath}})\coloneqq \sum_{l=0}^{\imath-1} x^l w_{y^{\jmath}}\quad\text{and}\quad \sigma^0_{2\alpha,\beta}(x^{\imath}w_{y^{\jmath}}) \coloneqq \delta_{\imath,u-1} w_{y^{\jmath}}
$$
where $0\le\imath<u$ and $\delta_{\imath,u-1}$ is the delta of Kronecker.
\end{proposition}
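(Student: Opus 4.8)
The $\beta$-th row of the diagram, augmented on the left by $\upsilon_\beta$, is the complex of abelian groups
$$
0 \longleftarrow Y_\beta \xleftarrow{\ \upsilon_\beta\ } X_{0\beta} \xleftarrow{\ d^0_{1\beta}\ } X_{1\beta} \xleftarrow{\ d^0_{2\beta}\ } X_{2\beta} \xleftarrow{\ d^0_{3\beta}\ } X_{3\beta} \longleftarrow \cdots ,
$$
and since $\sigma^0_{0\beta}$ and the $\sigma^0_{\alpha+1,\beta}$ increase degree by one, what has to be checked is exactly
$$
\upsilon_\beta\circ\sigma^0_{0\beta}=\ide, \qquad \sigma^0_{0\beta}\circ\upsilon_\beta + d^0_{1\beta}\circ\sigma^0_{1\beta}=\ide, \qquad \sigma^0_{\alpha\beta}\circ d^0_{\alpha\beta} + d^0_{\alpha+1,\beta}\circ\sigma^0_{\alpha+1,\beta}=\ide \quad(\alpha\ge 1).
$$
The plan is to verify these three identities by a direct computation on $\mathds{Z}$-bases, after describing the maps concretely.

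Because every $d^0_{\alpha\beta}$ is a morphism of right $E$-modules out of the regular module, $d^0_{2\alpha-1,\beta}$ is left multiplication by $x-1$ and $d^0_{2\alpha,\beta}$ is left multiplication by the norm $N\coloneqq\sum_{l=0}^{u-1}x^l$ of the subgroup $C_u=\langle x\rangle\subseteq C_u\rtimes_\zeta C_t$, while $\upsilon_\beta$ is the $\mathds{Z}$-linear extension of $x^\imath w_{y^\jmath}\mapsto y^\jmath$. Since $\{x^\imath w_{y^\jmath}: 0\le\imath<u,\ 0\le\jmath<t\}$ is a $\mathds{Z}$-basis of $E$, $\{y^\jmath:0\le\jmath<t\}$ a $\mathds{Z}$-basis of $Y_\beta$, and none of $\sigma^0$, $d^0$, $\upsilon_\beta$ alters the index $\jmath$, one may fix $\jmath$ and reduce everything to identities in $\mathds{Z}[C_u]$ with $x$ of order $u$. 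The first identity is $\upsilon_\beta\sigma^0_{0\beta}(y^\jmath)=\upsilon_\beta(w_{y^\jmath})=y^\jmath$. For the degree-$0$ identity, $d^0_{1\beta}\sigma^0_{1\beta}(x^\imath w_{y^\jmath})=(x-1)\sum_{l=0}^{\imath-1}x^l w_{y^\jmath}=(x^\imath-1)w_{y^\jmath}$, and adding $\sigma^0_{0\beta}\upsilon_\beta(x^\imath w_{y^\jmath})=w_{y^\jmath}$ yields $x^\imath w_{y^\jmath}$. In the even positive degrees one invokes $Nx^\imath=N$ and $\sigma^0_{2\alpha,\beta}(Nw_{y^\jmath})=w_{y^\jmath}$, combined again with $(x-1)\sum_{l=0}^{\imath-1}x^l=x^\imath-1$; in the odd degrees one uses $(x-1)x^\imath=x^{\imath+1}-x^\imath$ and lets $\sigma^0_{2\alpha-1,\beta}$ telescope against it.

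The one place needing attention — and the only genuine obstacle, mild as it is — is the boundary case $\imath=u-1$ in the odd degrees, where the relation $x^u=1$ turns $d^0_{2\alpha-1,\beta}(x^{u-1}w_{y^\jmath})$ into $w_{y^\jmath}-x^{u-1}w_{y^\jmath}$ rather than $x^u w_{y^\jmath}-x^{u-1}w_{y^\jmath}$; then $\sigma^0_{2\alpha-1,\beta}$ of this is $-\sum_{l=0}^{u-2}x^l w_{y^\jmath}$, which is cancelled precisely by $\sigma^0_{2\alpha,\beta}d^0_{2\alpha,\beta}(x^{u-1}w_{y^\jmath})=\sigma^0_{2\alpha,\beta}(Nw_{y^\jmath})=\sum_{l=0}^{u-1}x^l w_{y^\jmath}$, leaving $x^{u-1}w_{y^\jmath}$ as required. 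This wrap-around, together with the empty-sum convention $\sum_{l=0}^{-1}(\,\cdot\,)=0$ used when $\imath=0$, is exactly what makes the closed forms of $\sigma^0$ look asymmetric in the two parities; every other case is immediate. (Conceptually each row is $E\ot_{\mathds{Z}[C_u]}(-)$ applied to the standard $2$-periodic free resolution of $\mathds{Z}$ over $\mathds{Z}[C_u]$, and the displayed $\sigma^0$ is the image of that resolution's usual $\mathds{Z}$-split contracting homotopy; but as explicit formulas are what the proposition asserts, the direct check above is the shortest route.)
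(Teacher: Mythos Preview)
Your argument is correct and matches the paper's: both directly verify the three homotopy identities on the basis $\{x^\imath w_{y^\jmath}\}$, with the same case split and the same telescoping computations. One slip in your last paragraph: at an odd index the second term of the identity is $d^0_{2\alpha,\beta}\circ\sigma^0_{2\alpha,\beta}$, not $\sigma^0_{2\alpha,\beta}\circ d^0_{2\alpha,\beta}$ (and indeed $\sigma^0_{2\alpha,\beta}(Nw_{y^\jmath})=w_{y^\jmath}$, not $Nw_{y^\jmath}$); with the correct order your claimed value $\sum_{l=0}^{u-1}x^l w_{y^\jmath}$ is right and the cancellation goes through.
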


\begin{proof} We must check that
\begin{equation}\label{a ver}
v_{\beta}\xcirc \sigma^0_{0\beta} = \ide_{Y_{\beta}},\quad \sigma^0_{0\beta}\xcirc v_{\beta} + d^0_{1\beta}\xcirc \sigma^0_{1\beta} = \ide_{X_{0\beta}}\quad\text{and}\quad \sigma^0_{\alpha\beta}\xcirc d^0_{\alpha\beta} + d^0_{\alpha+1,\beta}\xcirc \sigma^0_{\alpha+1,\beta} = \ide_{X_{\alpha\beta}}.
\end{equation}
A direct computation shows that
\begin{align*}
& v_{\beta}\xcirc\sigma^0_{0\beta}(y^{\jmath}) = v_{\beta}\bigl(w_{y^{\jmath}}\bigr) = y^{\jmath},\\
& \sigma^0_{0\beta}\xcirc v_{\beta}(x^{\imath}w_{y^{\jmath}})= \sigma^0_{0\beta}\bigl(y^{\jmath}\bigr) = w_{y^{\jmath}},\\
& d^0_{2\alpha+1,\beta}\xcirc\sigma^0_{2\alpha+1,\beta}(x^{\imath} w_{y^{\jmath}}) = \sum_{l=0}^{\imath-1} d^0_{2\alpha+1,\beta}\bigl(x^lw_{y^{\jmath}}\bigr) = x^{\imath}w_{y^{\jmath}} - w_{y^{\jmath}},\\
& d^0_{2\alpha,\beta}\xcirc\sigma^0_{2\alpha,\beta}(x^{\imath} w_{y^{\jmath}}) = \delta_{\imath,u-1} d^0_{2\alpha,\beta} (w_{y^j}) = \delta_{\imath,u-1}\sum_{l=0}^{u-1} x^lw_{y^j}\\
& \sigma^0_{2\alpha+1,\beta}\xcirc d^0_{2\alpha+1,\beta}(x^{\imath} w_{y^{\jmath}}) = \sigma^0_{2\alpha+1,\beta}(x^{\imath+1} w_{y^{\jmath}}-x^{\imath} w_{y^{\jmath}}) = \begin{cases} x^{\imath}w_{y^{\jmath}} &\text{if $0\le \imath<u-1$,}\\ -\sum_{l=0}^{u-2} x^l w_{y^{\jmath}} & \text{if $\imath=u-1$,} \end{cases}\\
& \sigma^0_{2\alpha,\beta}\xcirc d^0_{2\alpha,\beta}(x^{\imath} w_{y^{\jmath}}) = \sum_{l=0}^{u-1} \sigma^0_{2\alpha,\beta}\bigl(x^{l+\imath} w_{y^{\jmath}}\bigr) = w_{y^{\jmath}}.
\end{align*}
Equalities~\eqref{a ver} follows immediately from these facts.
\end{proof}	

\begin{proposition} Consider $\mathds{Z}$ as a right $E$-module via the trivial action. The complex of right $E$-modules
$$
\begin{tikzpicture}
\begin{scope}[yshift=0cm,xshift=0cm]
\matrix(BPcomplex) [matrix of math nodes,row sep=0em, text height=1.5ex, text
depth=0.25ex, column sep=2.5em, inner sep=0pt, minimum height=5mm,minimum width =6mm]
{\mathds{Z} & Y_0 & Y_1 & Y_2 & Y_3 & Y_4 & Y_5 &\cdots,\\};
\draw[<-] (BPcomplex-1-1) -- node[above=1pt,font=\scriptsize] {$\pi$} (BPcomplex-1-2);
\draw[<-] (BPcomplex-1-2) -- node[above=1pt,font=\scriptsize] {$\partial_1$} (BPcomplex-1-3);
\draw[<-] (BPcomplex-1-3) -- node[above=1pt,font=\scriptsize] {$\partial_2$} (BPcomplex-1-4);
\draw[<-] (BPcomplex-1-4) -- node[above=1pt,font=\scriptsize] {$\partial_3$} (BPcomplex-1-5);
\draw[<-] (BPcomplex-1-5) -- node[above=1pt,font=\scriptsize] {$\partial_4$} (BPcomplex-1-6);
\draw[<-] (BPcomplex-1-6) -- node[above=1pt,font=\scriptsize] {$\partial_5$} (BPcomplex-1-7);
\draw[<-] (BPcomplex-1-7) -- node[above=1pt,font=\scriptsize] {$\partial_6$} (BPcomplex-1-8);
\end{scope}
\end{tikzpicture}
$$
where $\pi$ is the right $E$-module morphism given by $\pi(w_1)\coloneqq 1$, is contractible as a complex of abelian groups. A contracting homotopy $\sigma_0^{-1}\colon E\to Y_0$ and $\sigma^{-1}_{\beta+1}\colon Y_{\beta}\to Y_{\beta+1}$ for $\beta\ge 0$, is given by
$$
\sigma^{-1}_0(1)\coloneqq w_1,\quad \sigma^{-1}_{2\beta}(y^{\jmath})\coloneqq \delta_{t-1,\jmath}\quad\text{and}\quad \sigma^{-1}_{2\beta+1}(y^{\jmath})\coloneqq \sum_{l=0}^{\jmath-1} y^l
$$
where $0\le \jmath<t$.
\end{proposition}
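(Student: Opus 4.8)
The plan is to proceed exactly as in the proof of Proposition~\ref{las filas son contractiles}: once the differentials are made explicit on the canonical $\mathds{Z}$-basis $\{y^{\jmath}:0\le\jmath<t\}$ of each $Y_{\beta}=\mathds{Z}[C_t]$, the verification of the contracting homotopy identities becomes a short direct computation. Throughout I write $N_t\coloneqq\sum_{l=0}^{t-1}y^l$ for the norm element of $\mathds{Z}[C_t]$.

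First I would unwind the differentials. Since each $\partial_{\beta}$ is a right $E$-module map and $Y_{\beta}$ is generated over $E$ by $1=y^0$ (indeed $y^{\jmath}=1\cdot x^0w_{y^{\jmath}}$ under the right action $y^l\cdot x^{\imath}w_{y^{\jmath'}}=y^{\jmath'+l}$), the prescriptions $\partial_{2\beta-1}(1)=y-1$ and $\partial_{2\beta}(1)=N_t$ force
$$
\partial_{2\beta-1}(y^{\jmath})=y^{\jmath+1}-y^{\jmath},\qquad \partial_{2\beta}(y^{\jmath})=N_t,\qquad \pi(y^{\jmath})=1,
$$
with $y^{\jmath+1}$ read modulo $t$. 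Thus the augmented column is the familiar $2$-periodic free resolution of $\mathds{Z}$ over $\mathds{Z}[C_t]$, and it remains only to check that $\sigma^{-1}_*$ satisfies
$$
\pi\xcirc\sigma^{-1}_0=\ide_{\mathds{Z}},\qquad \sigma^{-1}_0\xcirc\pi+\partial_1\xcirc\sigma^{-1}_1=\ide_{Y_0},\qquad \sigma^{-1}_{\beta}\xcirc\partial_{\beta}+\partial_{\beta+1}\xcirc\sigma^{-1}_{\beta+1}=\ide_{Y_{\beta}}\quad(\beta\ge1),
$$
where $\sigma^{-1}_0\colon\mathds{Z}\to Y_0$ is the splitting sending $1$ to $1$.

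I would then evaluate each identity on $y^{\jmath}$. The first is immediate from $\sigma^{-1}_0(1)=1$ and $\pi(1)=1$. Applying a differential after $\sigma^{-1}_{2\beta+1}(y^{\jmath})=\sum_{l=0}^{\jmath-1}y^l$ produces a telescoping sum equal to $y^{\jmath}-1$ when no wrap-around occurs, whereas $\sigma^{-1}_{2\beta}$ applied after $N_t$ collapses the norm to $1$ via the Kronecker delta, and $\partial_{2\beta}$ applied after $\sigma^{-1}_{2\beta}$ yields $\delta_{t-1,\jmath}N_t$; matching these contributions against the incoming differential (or against $\pi$) gives $\ide$ in each case, the leftover $-1$ of the telescope being cancelled either by the $\sigma^{-1}\xcirc N_t$ term or by $\sigma^{-1}_0\xcirc\pi$. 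The only step that is not an automatic telescoping/collapsing identity --- and hence the one point needing a line of care --- is the cyclic wrap-around $y^{t}=y^{0}$ in odd column degrees: for $\jmath=t-1$ the telescope leaves a leftover $-\sum_{l=0}^{t-2}y^l$, which is absorbed exactly by the contribution $\partial_{2\beta}(\delta_{t-1,t-1})=N_t$ coming through the delta. Beyond that the computation is routine, and I would present it in the same compact display format as in Proposition~\ref{las filas son contractiles}.
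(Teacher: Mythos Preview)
Your proposal is correct and follows essentially the same approach as the paper: both unwind the differentials on the basis $\{y^{\jmath}\}$ and verify the contracting homotopy identities by the same telescoping/Kronecker-delta computations, including the wrap-around at $\jmath=t-1$. The paper simply displays the six individual compositions $\pi\xcirc\sigma^{-1}_0$, $\sigma^{-1}_0\xcirc\pi$, $\partial_{2\beta+1}\xcirc\sigma^{-1}_{2\beta+1}$, $\partial_{2\beta}\xcirc\sigma^{-1}_{2\beta}$, $\sigma^{-1}_{2\beta+1}\xcirc\partial_{2\beta+1}$, $\sigma^{-1}_{2\beta}\xcirc\partial_{2\beta}$ and leaves the reader to combine them, exactly as you propose.
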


\begin{proof} A direct computation shows that
\begin{align*}
& \pi\xcirc\sigma^{-1}_0(1) = \pi(1) = 1,\\
& \sigma^{-1}_0\xcirc \pi(y^{\jmath}) = \sigma^{-1}_0(1) = 1,\\
& \partial_{2\beta+1}\xcirc\sigma^{-1}_{2\beta+1}(y^{\jmath}) = \sum_{l=0}^{\jmath-1} \partial_{2\beta+1}(y^l) = y^{\jmath} - 1,\\
& \partial_{2\beta}\xcirc\sigma^{-1}_{2\beta}(y^{\jmath}) = \partial_{2\beta}(\delta_{t-1,\jmath}) = \delta_{t-1,\jmath} \sum_{l=0}^{t-1} y^l,\\
& \sigma^{-1}_{2\beta+1}\xcirc\partial_{2\beta+1}(y^{\jmath}) = \sigma^{-1}_{2\beta+1}\bigl(y^{\jmath+1}- y^{\jmath}\bigr) = \begin{cases} y^{\jmath} &\text{if $\jmath<t-1$,}\\ -\sum_{l=0}^{t-2} y^l &\text{if $\jmath=t-1$,} \end{cases}\\
& \sigma^{-1}_{2\beta}\xcirc\partial_{2\beta}(y^{\jmath}) = \sum_{l=0}^{t-1} \sigma^{-1}_{2\beta}(y^l) = 1.
\end{align*}
The result follows easily from these facts.
\end{proof}

For $\alpha\ge 0$ and $1\le l\le \beta$, we define right $E$-module maps $d^l_{\alpha\beta}\colon X_{\alpha\beta}\to X_{\alpha+l-1,\beta-l}$, \index{db@$d^l_{\alpha\beta}$|dotfillboldidx} recursively by:
\begin{equation}\label{def de dl}
d^l(w_1)\coloneqq \begin{cases}
-\sigma^0\xcirc\partial\xcirc\upsilon(w_1) &\text{if $l=1$ and $\alpha=0$,}\\
-\sigma^0\xcirc d^1\xcirc d^0(w_1) &\text{if $l=1$ and $\alpha>0$,}\\
-\sum_{\jmath=1}^{l-1}\sigma^0\xcirc d^{l-\jmath}\xcirc d^{\jmath}(w_1) &\text{if $1<l$ and $\alpha=0$,}\\
-\sum_{\jmath=0}^{l-1}\sigma^0\xcirc d^{l-\jmath}\xcirc d^{\jmath}(w_1) &\text{if $1<l$ and $\alpha>0$.}
\end{cases}
\end{equation}

\begin{theorem}\label{res nuestra} Let $\Upsilon$ be the family of all the epimorphism of right $E$-modules which split as morphisms of abelian groups. The chain complex
$$
\begin{tikzpicture}
\begin{scope}[yshift=0cm,xshift=0cm]
\matrix(BPcomplex) [matrix of math nodes, row sep=0em, text height=1.5ex, text
depth=0.25ex, column sep=2.5em, inner sep=0pt, minimum height=5mm, minimum width =6mm]
{\mathds{Z} & X_0 & X_1 & X_2 & X_3 & X_4 &\cdots,\\};
\draw[<-] (BPcomplex-1-1) -- node[above=1pt,font=\scriptsize] {$\pi_E$} (BPcomplex-1-2);
\draw[<-] (BPcomplex-1-2) -- node[above=1pt,font=\scriptsize] {$d_1$} (BPcomplex-1-3);
\draw[<-] (BPcomplex-1-3) -- node[above=1pt,font=\scriptsize] {$d_2$} (BPcomplex-1-4);
\draw[<-] (BPcomplex-1-4) -- node[above=1pt,font=\scriptsize] {$d_3$} (BPcomplex-1-5);
\draw[<-] (BPcomplex-1-5) -- node[above=1pt,font=\scriptsize] {$d_4$} (BPcomplex-1-6);
\draw[<-] (BPcomplex-1-6) -- node[above=1pt,font=\scriptsize] {$d_5$} (BPcomplex-1-7);
\end{scope}
\end{tikzpicture}
$$
where $\pi_E$ is the augmentation map, $X_n\coloneqq \bigoplus_{\alpha+\beta=n} X_{\alpha\beta}$ and $d_n$ is the right $E$-module map defined by
$$
d_n(\bx)\coloneqq \begin{cases} \displaystyle{\sum_{l=1}^n d^l_{0n}(\bx)} &\text{if $\bx\in X_{0n}$,}\\ \displaystyle{\sum^{n-\alpha}_{l=0} d^l_{\alpha,n-\alpha}(\bx)} &\text{if $\bx\in X_{\alpha,n-\alpha}$ with $\alpha>0$,}\end{cases}
$$
is a $\Upsilon$-relative projective resolution of $E$.
\end{theorem}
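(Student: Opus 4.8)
The plan is to deduce the theorem from the perturbation lemma (Theorem~\ref{lema de perturbacion}). Relative projectivity needs nothing: each $X_n=\bigoplus_{\alpha+\beta=n}E$ is a finite direct sum of copies of $E\cong\mathds{Z}\ot E$, hence $\Upsilon$-relative projective by the criterion recalled in Section~\ref{section:Preliminaries}. So the whole content is that the augmented complex $\mathds{Z}\xleftarrow{\pi_E}X_0\xleftarrow{d_1}X_1\xleftarrow{}\cdots$ is contractible as a complex of abelian groups.

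First I would verify that $(X_*,d_*)$ is a chain complex, i.e. $\pi_E\xcirc d_1=0$ and $d_{n-1}\xcirc d_n=0$. Write $d=d^{0}+\sum_{l\ge 1}d^{l}$, where $d^{0}$ is the purely horizontal part $\bigoplus_{\alpha\ge 1}d^0_{\alpha\beta}$ and each $d^{l}$ with $l\ge 1$ lowers the index $\beta$ by $l$. Sorting $d\xcirc d$ by the total drop $L$ in $\beta$, the identity $d\xcirc d=0$ amounts to $\sum_{\jmath=0}^{L}d^{L-\jmath}\xcirc d^{\jmath}=0$ for every $L\ge 0$: for $L=0$ this is the assertion that the rows of the diagram of Subsection~\ref{una resolucion} are complexes, and for $L\ge 1$ it is proved by induction on $L$ and, inside each $L$, on $\alpha$, by substituting the recursive definition~\eqref{def de dl} and then using the homotopy identities~\eqref{a ver} of Proposition~\ref{las filas son contractiles} (chiefly $\sigma^0\xcirc d^0+d^0\xcirc\sigma^0=\ide$, to rewrite $d^0\xcirc d^L$), after which the surviving terms collapse through the three ``base'' relations $d^0\xcirc d^0=0$, $\upsilon\xcirc d^0=0$ and $\partial\xcirc\partial=0$. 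The same computation settles $\pi_E\xcirc d_1=0$. This bookkeeping is the technical heart of the argument and the step I expect to be the main obstacle; once it is done, $(X_*,d_*)$ is a genuine complex of $\Upsilon$-relative projective modules.

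For exactness I would pass to total complexes and perturb. Proposition~\ref{las filas son contractiles} says that for each $\beta$ the row complex $(X_{*\beta},d^0_{*\beta})$ admits a special deformation retract onto $Y_\beta$, placed in horizontal degree $0$ with zero differential, with inclusion $\sigma^0_{0\beta}$, projection $\upsilon_\beta$ and homotopy $\sigma^0_{*\beta}$; the side conditions $h\xcirc\imath=0$, $p\xcirc h=0$, $h\xcirc h=0$ are read off the explicit formulas. Forming the total complex $X^{\mathrm t}_n=\bigoplus_{\alpha+\beta=n}X_{\alpha\beta}$ with differential $d^{0,\mathrm t}$ (the sum of the $d^0_{\alpha\beta}$ with $\alpha\ge 1$), these retracts assemble into a special deformation retract of $(X^{\mathrm t}_*,d^{0,\mathrm t}_*)$ onto $(Y^{\mathrm t}_*,0)$, where $Y^{\mathrm t}_n=Y_n$, $p^{\mathrm t}=\bigoplus_\beta\upsilon_\beta$, $\imath^{\mathrm t}=\bigoplus_\beta\sigma^0_{0\beta}$ and $h^{\mathrm t}=\bigoplus_\beta\sigma^0_{*\beta}$ (with the sign dictated by~\eqref{(a)}). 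Now $\delta\coloneqq\sum_{l\ge 1}d^{l}\colon X^{\mathrm t}_*\to X^{\mathrm t}_{*-1}$ is a perturbation of this retract: $(d^{0,\mathrm t}+\delta)^2=d\xcirc d=0$ by the previous paragraph, and it is small because $\delta$ strictly lowers $\beta$ while $h^{\mathrm t}$ preserves it, so $\delta\xcirc h^{\mathrm t}$ is locally nilpotent. Theorem~\ref{lema de perturbacion} then produces a special deformation retract of $(X^{\mathrm t}_*,d_*)=(X^{\mathrm t}_*,d^{0,\mathrm t}_*+\delta_*)$ onto $(Y^{\mathrm t}_*,d^{1}_*)$, where $d^{1}=p^{\mathrm t}\xcirc A\xcirc\imath^{\mathrm t}$ and $A=(\ide-\delta\xcirc h^{\mathrm t})^{-1}\xcirc\delta$.

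It remains to identify $d^1$. Since $p^{\mathrm t}$ is concentrated in horizontal degree $0$ and $p^{\mathrm t}\xcirc h^{\mathrm t}=0$, only the summand $p^{\mathrm t}\xcirc\delta\xcirc\imath^{\mathrm t}$ of $p^{\mathrm t}\xcirc A\xcirc\imath^{\mathrm t}$ contributes, and within it only the maps $d^1_{0\beta}$; using $\upsilon_\beta\xcirc\sigma^0_{0\beta}=\ide$ and $d^1_{0\beta}=-\sigma^0_{0,\beta-1}\xcirc\partial_\beta\xcirc\upsilon_\beta$ this gives $d^{1}=-\partial$. Thus $(Y^{\mathrm t}_*,d^1_*)$ is the complex $\mathds{Z}\xleftarrow{\pi}Y_0\xleftarrow{\partial_1}Y_1\xleftarrow{}\cdots$ with differentials and augmentation negated, which is contractible as a complex of abelian groups by the second proposition of Subsection~\ref{una resolucion}. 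Composing the two special deformation retracts yields one of $(X_*,d_*)$ onto $\mathds{Z}$ whose projection in degree $0$ is $\pi\xcirc\upsilon_0=\pi_E$; hence $\mathds{Z}\xleftarrow{\pi_E}X_0\xleftarrow{d_1}\cdots$ is contractible over $\mathds{Z}$, and together with relative projectivity this is exactly the assertion of the theorem.
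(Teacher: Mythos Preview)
Your approach is correct and considerably more self-contained than the paper's: the paper's entire proof is the one-line citation ``This is an immediate consequence of~\cite{GG}*{Corollary~A2},'' so what you have written is effectively a reconstruction of that external result with the tools already set up in the paper.

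The perturbation argument you give is sound. The side conditions $h\xcirc\imath=0$, $p\xcirc h=0$, $h\xcirc h=0$ do hold for $\sigma^0_{*\beta}$ (e.g.\ $\sigma^0_{2\alpha}\xcirc\sigma^0_{2\alpha-1}(x^{\imath}w_{y^{\jmath}})=\sum_{l<i}\delta_{l,u-1}w_{y^{\jmath}}=0$ since $\imath<u$, and similarly in the other parity), and your identification $d^{1}=-\partial$ is right: only the $l=1$ summand of $\delta$ survives under $p^{\mathrm t}$, and $\upsilon\xcirc d^1_{0\beta}\xcirc\sigma^0_{0\beta}=-\upsilon\xcirc\sigma^0\xcirc\partial\xcirc\upsilon\xcirc\sigma^0=-\partial$. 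The compatibility $\pi_E=\pi\xcirc\upsilon_0$ in degree~$0$ lets you splice the two contractions as you describe.

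The one place where your sketch is genuinely incomplete is the verification that $d\xcirc d=0$, which you correctly flag as the technical heart. Your inductive outline (on $L$, then on $\alpha$, rewriting $d^0\xcirc d^L$ via $d^0\xcirc\sigma^0=\ide-\sigma^0\xcirc d^0$ and collapsing against $d^0\xcirc d^0=0$, $\upsilon\xcirc d^0=0$, $\partial\xcirc\partial=0$) is the standard argument and does go through; just be aware that the boundary case $\alpha+L-2=0$ forces you to use the second identity in~\eqref{a ver} ($d^0\xcirc\sigma^0=\ide-\sigma^0_0\xcirc\upsilon$) rather than the third, and the $\alpha=0$ case of the recursion~\eqref{def de dl} omits the $\jmath=0$ term, so the bookkeeping splits into several subcases. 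This is presumably exactly what \cite{GG}*{Corollary~A2} packages.
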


\begin{proof} This is an immediate consequence of~\cite{GG}*{Corollary~A2}.
\end{proof}

\begin{remark} In the previous definition and in the rest of this work we identify each $X_{rs}$ with its image inside $X_{\alpha+\beta}$.
\end{remark}

In order to carry out our computations we also need to give an explicit $\mathds{Z}$-linear contracting homotopy of this resolution. For this we define morphisms of abelian groups
$$
\sigma^l_{l,\beta-l}\colon Y_{\beta}\longrightarrow X_{l,\beta-l}\qquad\text{and}\qquad\sigma^l_{\alpha+l+1,\beta-l}\colon X_{\alpha\beta}\longrightarrow X_{\alpha+l+1,\beta-l},
$$
recursively by $\sigma^l_{\alpha+l+1,\beta-l}\coloneqq  -\sum_{\imath=0}^{l-1}\sigma^0\xcirc d^{l-\imath}\xcirc\sigma^{\imath}$ ($0<l\le \beta$ and $\alpha\ge -1$).

\begin{proposition}\label{cont nuestra} A contracting homotopy $\ov{\sigma}_0\colon E\to X_0$ and $\ov{\sigma}_{n+1}\colon X_n\to X_{n+1}$ ($n\ge 0$), of the resolution introduced in Theorem~\ref{res nuestra}, is given by $\ov{\sigma}_0(\bx)\coloneqq \sigma_{00}^0\xcirc\sigma_0^{-1}(\bx)$ and
$$
\ov{\sigma}_{n+1}(\bx)\coloneqq \begin{cases} \displaystyle{-\sum_{l=0}^{n+1}\sigma_{l,n-l+1}^l\xcirc\sigma_{n+1}^{-1}\xcirc \upsilon_n(\bx) +  \sum_{l=0}^n\sigma_{l+1,n-l}^l(\bx)} &\text{if $\bx\in X_{0n}$,}\\ \displaystyle{\phantom{-} \sum_{l=0}^{n-\alpha}\sigma_{\alpha+l+1,n-\alpha-l}^l(\bx)} &\text{if $\bx\in X_{\alpha,n-\alpha}$ with $\alpha>0$.}\end{cases}
$$
\end{proposition}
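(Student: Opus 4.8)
This proposition is the homotopy counterpart of Theorem~\ref{res nuestra}: the recursively defined maps $d^l$ and $\sigma^l$, together with the assembly in the statement, are exactly the data produced by the homological perturbation machinery of \cite{GG}*{Corollary~A2} that was already invoked for Theorem~\ref{res nuestra}, so the quickest route is to observe that that same corollary outputs, alongside the resolution $(X_*,d_*)$, a $\mathds{Z}$-linear contracting homotopy, and that unwinding its construction gives precisely the maps $\ov{\sigma}_{n+1}$. The underlying picture is the following. For each $\beta$ the augmented row $Y_{\beta}\xleftarrow{\upsilon_{\beta}}X_{0\beta}\xleftarrow{d^0_{1\beta}}X_{1\beta}\xleftarrow{d^0_{2\beta}}\cdots$ is contractible onto $Y_{\beta}$ with homotopy $\sigma^0_{*\beta}$ (Proposition~\ref{las filas son contractiles}); stacking these over $\beta$ yields a special deformation retract, and the maps $\sum_{l\ge 1}d^l$ form a perturbation of the ``horizontal'' differential --- small because $\delta\xcirc h$ raises the column index and is therefore locally nilpotent --- which, after transfer along the retract, incorporates both the differential $\partial$ and the augmentation $\pi$ of the complex $Y_*$ (the latter itself resolving $\mathds{Z}$ with homotopy $\sigma^{-1}_*$, by the preceding proposition). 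Applying the perturbation lemma (Theorem~\ref{lema de perturbacion}, in the double-complex form of Proposition~\ref{para complejos dobles}) and expanding $h^1=h+h\xcirc A\xcirc h$ with $A=\sum_{k\ge 0}(\delta\xcirc h)^k\xcirc\delta$: collecting the summands by the number $l$ of $d^{\ge 1}$-factors sandwiched between successive $\sigma^0$'s produces exactly the maps $\sigma^l$ (they obey the same recursion $\sigma^l=-\sum_{\imath=0}^{l-1}\sigma^0\xcirc d^{l-\imath}\xcirc\sigma^{\imath}$), hence the block $\sum_l\sigma^l_{\alpha+l+1,\beta-l}$; while the summands that exit the $X$-region through $\upsilon_n$, travel along $Y_*$ via $\sigma^{-1}_{n+1}$ and re-enter through a $\sigma^l$ based at $Y_{n+1}$ --- which occur only for $\bx\in X_{0n}$ --- produce the extra block $-\sum_l\sigma^l_{l,n-l+1}\xcirc\sigma^{-1}_{n+1}\xcirc\upsilon_n$. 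After this translation of notation one reads off $h^1=\ov{\sigma}_{n+1}$.

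If one prefers a self-contained argument, the plan is a direct induction on the total degree $n$. One substitutes the definitions of $d_n$ and $\ov{\sigma}_{n+1}$ into the desired identity $d_{n+1}\xcirc\ov{\sigma}_{n+1}+\ov{\sigma}_{n}\xcirc d_{n}=\ide$ (with the obvious modification in degree $0$, where $\pi_E\xcirc\ov{\sigma}_0=\ide$ follows from $\upsilon_0\xcirc\sigma^0_{00}=\ide$ and $\pi\xcirc\sigma^{-1}_0=\ide$), and then collapses the left-hand side using: the three contraction identities of Proposition~\ref{las filas son contractiles}; the analogous identities for $(Y_*,\partial,\pi)$ established in the preceding proposition; and the recursions~\eqref{def de dl} for $d^l$ together with the twin recursion $\sigma^l=-\sum_{\imath<l}\sigma^0\xcirc d^{l-\imath}\xcirc\sigma^{\imath}$ for $\sigma^l$. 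Organizationally this reduces to an inner induction on $l$ proving, on each $X_{\alpha\beta}$, the ``associativity'' relations $\sum_{\jmath=0}^{l}d^{l-\jmath}\xcirc d^{\jmath}=0$ --- which are precisely what make $(X_*,d_*)$ a complex, and which on the column $\alpha=0$ acquire the edge term $\partial\xcirc\upsilon$ --- together with the corresponding mixed $d$--$\sigma$ relations.

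The main obstacle in either route is purely combinatorial rather than conceptual: one must keep straight which of the four cases of~\eqref{def de dl} is in force and which branch of the formula for $\ov{\sigma}_{n+1}$ applies, and in particular handle correctly the ``edge'' contributions on the column $\alpha=0$ --- the terms carrying the detour through $Y_*$ and the augmentation $\pi_E$ --- matching them against the $\partial\xcirc\upsilon$ terms coming from the first line of~\eqref{def de dl} and from the contraction identities for $Y_*$; the signs built into $d^0$, into the recursions, and into $\ov{\sigma}_{n+1}$ must be tracked throughout. Since none of this is deep, the economical path is to deduce the statement directly from \cite{GG}*{Corollary~A2}.
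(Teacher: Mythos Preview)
Your proposal is correct and takes the same approach as the paper: the paper's proof consists of the single sentence ``This is a direct consequence of~\cite{GG}*{Corollary~A2}.'' Your elaboration of the perturbation-lemma picture and the alternative direct induction are sound but go well beyond what the paper records.
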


\begin{proof} This is a direct consequence of~\cite{GG}*{Corollary~A2}.
\end{proof}

The following theorem gives a closed expression of the homomorphisms $d^l_{\alpha\beta}$ that appear in the relative projective resolution of $E$, obtained above.

\begin{theorem}\label{calculo de los dl} The maps $d^l$ vanish for $l>2$. Moreover
\begin{align*}
&d^1_{\alpha,2\beta-1}(w_1) = (-1)^{\alpha} (w_1 - w_y), && d^2_{2\alpha,\beta}(w_1) = - w_1,\\
&d^1_{\alpha,2\beta}(w_1) = (-1)^{\alpha+1} \sum_{h=0}^{t-1} w_{y^h}, && d^2_{2\alpha+1,\beta}(w_1) = 0.
\end{align*}
\end{theorem}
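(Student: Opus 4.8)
The plan is to prove the theorem by induction on $l$, using the recursive definition~\eqref{def de dl} of the maps $d^l_{\alpha\beta}$ together with the explicit contracting homotopies $\sigma^0$ provided by Proposition~\ref{las filas son contractiles}. Everything reduces to evaluating composites of the very concrete maps $d^0_{\alpha\beta}$, $\partial_\beta$, $\upsilon_\beta$ and $\sigma^0_{\alpha\beta}$ on the generator $w_1$, so the work is bookkeeping rather than anything conceptual. First I would treat the base case $l=1$. For $\alpha=0$ one has $d^1_{0\beta}(w_1)=-\sigma^0\xcirc\partial_\beta\xcirc\upsilon_\beta(w_1)=-\sigma^0_{0\beta}(\partial_\beta(1))$; plugging in $\partial_{2\beta-1}(1)=y-1$ and $\partial_{2\beta}(1)=\sum_{l=0}^{t-1}y^l$ and applying $\sigma^0_{0\beta}(y^{\jmath})=w_{y^{\jmath}}$ gives directly $d^1_{0,2\beta-1}(w_1)=w_1-w_y$ and $d^1_{0,2\beta}(w_1)=-\sum_{h=0}^{t-1}w_{y^h}$, matching the stated formulas with $\alpha=0$. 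For $\alpha>0$ one uses $d^1_{\alpha\beta}(w_1)=-\sigma^0\xcirc d^1_{\alpha-1,\beta+?}\xcirc d^0_{\alpha\beta}(w_1)$ and the explicit form of $d^0_{2\alpha-1,\beta}$, $d^0_{2\alpha,\beta}$; the sign $(-1)^\alpha$ (resp.\ $(-1)^{\alpha+1}$) then propagates by an immediate induction on $\alpha$, since each application of $d^0$ contributes $x-1$ or $\sum x^l$ and $\sigma^0_{2\alpha-1,\beta}$, $\sigma^0_{2\alpha,\beta}$ are $\mathds{Z}[C_t]$-linear in the obvious sense.

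Next I would compute $d^2$. By~\eqref{def de dl}, $d^2_{\alpha\beta}(w_1)=-\sigma^0\xcirc d^1\xcirc d^1(w_1)$ in the $\alpha=0$ case and $d^2_{\alpha\beta}(w_1)=-\sigma^0\bigl(d^2\xcirc d^0(w_1)+d^1\xcirc d^1(w_1)\bigr)$ for $\alpha>0$; but the second summand involves $d^1\xcirc d^1$, which by the $l=1$ formulas just computed is a composite of two "$\partial$-type" differentials, and one checks it lands in the image controlled by $\sigma^0$ so that, after applying $\sigma^0$, only the parity of $\alpha$ survives. Concretely, for $d^2_{2\alpha,\beta}$ one traces through: the inner $d^1$ produces $\pm(w_1-w_y)$ or $\pm\sum_h w_{y^h}$, the outer $d^1$ lands in a $Y$-term via $\upsilon$, and $\sigma^0_{2\alpha,\beta}(x^{\imath}w_{y^{\jmath}})=\delta_{\imath,u-1}w_{y^{\jmath}}$ collapses everything to $-w_1$; for $d^2_{2\alpha+1,\beta}$ the analogous trace uses $\sigma^0_{2\alpha+1,\beta}(x^{\imath}w_{y^{\jmath}})=\sum_{l=0}^{\imath-1}x^lw_{y^{\jmath}}$ and a telescoping cancellation forces the result to be $0$.

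Finally, for the vanishing $d^l=0$ when $l>2$, I would argue that the recursion~\eqref{def de dl} for $d^l_{\alpha\beta}$ with $l\ge 3$ is built entirely out of composites $d^{l-\jmath}\xcirc d^{\jmath}$ with $1\le\jmath\le l-1$, hence $2\le l-\jmath\le l-1<l$ or $\jmath\ge 2$; since we have just shown $d^1,d^2$ are the only nonzero ones and $d^2\xcirc d^1$, $d^1\xcirc d^2$, $d^2\xcirc d^2$ all vanish (the middle differentials $d^0$, $\partial$ of a double complex compose to zero, and the cross terms vanish by the same $\sigma^0$-computations as above, being forced by the identity $\sum_{\jmath}d^{l-\jmath}d^{\jmath}=0$ that $(X_*,d_*)$ being a complex imposes), every summand in the definition of $d^l_{\alpha\beta}$ for $l\ge 3$ is already zero before applying $\sigma^0$. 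This last point is really the place to be careful: one must confirm that no new contributions appear at level $l=3$ from a composite of the form $\sigma^0\xcirc d^2\xcirc d^1$ or $\sigma^0\xcirc d^1\xcirc d^2$ — I expect this to be the main obstacle — but since $d^1$ and $d^2$ as computed above take values in the "$Y$-supported" part on which $d^0$ and $d^1$ act compatibly, these composites vanish for degree/support reasons, and then an easy induction closes the case $l>2$.
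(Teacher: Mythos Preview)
Your approach is essentially the paper's: induct on $\alpha$ to get the closed forms of $d^1$ and $d^2$, then verify directly that the $\sigma^0$-composites feeding into the recursion for $d^3$ (and $d^4$) vanish. The $d^1$ and $d^2$ computations are fine in outline.

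The genuine problem is in your $l>2$ argument. You assert that $d^2\xcirc d^1$, $d^1\xcirc d^2$, $d^2\xcirc d^2$ \emph{themselves} vanish, and justify this by the identity $\sum_{\jmath} d^{l-\jmath}\xcirc d^{\jmath}=0$ coming from $(X_*,d_*)$ being a complex. That is circular: for $l=3$ the identity reads $d^0d^3+d^1d^2+d^2d^1+d^3d^0=0$, so you cannot extract $d^1d^2+d^2d^1=0$ without already knowing $d^3=0$. Moreover the claim is simply false: for $\alpha$ even, $d^2_{\alpha,\beta-1}\xcirc d^1_{\alpha\beta}(w_1)=-d^1_{\alpha\beta}(w_1)\neq 0$, since $d^2_{\alpha,\cdot}$ acts as $-\ide$. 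What \emph{is} true, and what the paper checks, is that
\[
\sigma^0_{\alpha+2,\beta-3}\xcirc d^1_{\alpha+1,\beta-2}\xcirc d^2_{\alpha\beta}(w_1)=\sigma^0_{\alpha+2,\beta-3}\xcirc d^2_{\alpha,\beta-1}\xcirc d^1_{\alpha\beta}(w_1)=\sigma^0_{\alpha+2,\beta-4}\xcirc d^2_{\alpha+1,\beta-2}\xcirc d^2_{\alpha\beta}(w_1)=0.
\]
Your final paragraph actually contains the correct reason for the first two: the values $d^1(w_1)$ and $d^2(w_1)$ lie in the $\mathds{Z}$-span of $\{w_{y^h}\}$ (the $x^0$-part), and since $d^2_{\alpha}$ acts as $-\ide$ or $0$ this persists after one more application; then $\sigma^0_{\alpha+2,\cdot}$ with $\alpha$ even sends $x^{\imath}w_{y^{\jmath}}\mapsto \delta_{\imath,u-1}w_{y^{\jmath}}$, which kills the $x^0$-part because $u>1$. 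For $\alpha$ odd one of the $d^2$'s is already zero. You should state this explicitly rather than appealing to the complex identity, and you should not forget the third composite $\sigma^0\xcirc d^2\xcirc d^2$ (needed for $d^4$), which vanishes because consecutive $d^2$'s have opposite $\alpha$-parity so $d^2\xcirc d^2=0$. Once $d^3=d^4=0$, the vanishing for $l\ge 5$ follows by the recursion since every term $d^{l-\jmath}\xcirc d^{\jmath}$ with $1\le\jmath\le l-1$ then involves a factor $d^{\ge 3}$.
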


\begin{proof} We sketch the proof. We first prove the formula for $d^1_{\alpha\beta}$ by induction on $\alpha$. By equality~\eqref{def de dl},
\begin{align*}
& d^1_{0,2\beta+1}(w_1) = -\sigma^0_{0,2\beta}\xcirc\partial_{2\beta+1}\xcirc\upsilon_{2\beta+1}(w_1) = w_1- w_y
\shortintertext{and}
& d^1_{0,2\beta}(w_1) = -\sigma^0_{0,2\beta-1}\xcirc\partial_{2\beta}\xcirc\upsilon_{2\beta}(w_1) = -\sum_{h=0}^{t-1} w_{y^h},
\end{align*}
which proves the case $\alpha=0$. Assume the formula is true for $\alpha$. Then
\begin{align*}
d^1_{\alpha+1,2\beta+1}(w_1) & = -\sigma^0_{\alpha+1,2\beta}\xcirc d^1_{\alpha,2\beta+1}\xcirc d^0_{\alpha+1,2\beta+1}(w_1) = (-1)^{\alpha+1} (w_1 - w_y)
\shortintertext{and}
d^1_{\alpha+1,2\beta}(w_1) & = -\sigma^0_{\alpha+1,2\beta-1}\xcirc d^1_{\alpha,2\beta}\xcirc d^0_{\alpha+1,2\beta}(w_1) = (-1)^{\alpha} \sum_{h=0}^{t-1} w_{y^h},
\end{align*}
as desired. We next prove the formula for $d^2_{\alpha\beta}$. For $\alpha=0$, we have
\begin{equation*}
d^2_{0\beta}(w_1) = -\sigma^0_{1,\beta-2}\xcirc d^1_{0,\beta-1} \xcirc d^1_{0\beta}(w_1) = -w_1.
\end{equation*}
Assume the formula is true for $\alpha$. Then
\begin{equation*}
d^2_{\alpha+1,\beta}(w_1) = -\sigma^0_{\alpha+2,\beta-2}\xcirc \left(d^2_{\alpha\beta}\xcirc d^0_{\alpha+1,\beta} + d^1_{\alpha+1,\beta-1} \xcirc d^1_{\alpha+1,\beta}\right)(w_1) = \begin{cases} \phantom{-} 0 &\text{if $\alpha$ is even,}\\ -w_1 &\text{if $\alpha$ is odd,}\end{cases}
\end{equation*}
as desired. Finally, since
$$
\sigma^0_{\alpha+2,\beta-3}\xcirc d^1_{\alpha+1,\beta-2}\xcirc d^2_{\alpha\beta}(w_1) = \sigma^0_{\alpha+2,\beta-3}\xcirc d^2_{\alpha,\beta-1}\xcirc d^1_{\alpha\beta}(w_1) = \sigma^0_{\alpha+2,\beta-4}\xcirc d^2_{\alpha+1,\beta-2}\xcirc d^2_{\alpha\beta}(w_1) = 0,
$$
from equality~\eqref{def de dl} it follows that $d^l=0$ for $l>2$.
\end{proof}

\begin{proposition}\label{prim prop of ov sigma} The homotopy $\ov{\sigma}$ found in Proposition~\ref{cont nuestra} satisfies
$$
\ov{\sigma}_{n+1}(\bx) = -\sigma_{0,n+1}^0\xcirc\sigma_{n+1}^{-1}\xcirc\upsilon_n(\bx) + \sum_{l=0}^n \sigma_{l+1,n-l}^l(\bx)\qquad\text{for all $\bx\in X_{0n}$.}
$$
\end{proposition}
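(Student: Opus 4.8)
The plan is to start from the formula for $\ov{\sigma}_{n+1}$ on $X_{0n}$ proved in Proposition~\ref{cont nuestra} and to note that it coincides with the asserted one exactly when the first sum there collapses to its $l=0$ summand, that is, when
$$\sigma^l_{l,n+1-l}\xcirc\sigma^{-1}_{n+1}\xcirc\upsilon_n(\bx)=0\qquad\text{for all }\bx\in X_{0n}\text{ and all }l\text{ with }1\le l\le n+1.$$
Since $\upsilon_n$ is a surjective morphism of right $E$-modules ($\upsilon_n(w_1)=1$ generates $Y_n$), it is enough to prove the more precise statement that $\sigma^l_{l,\beta-l}\xcirc\sigma^{-1}_\beta\colon Y_{\beta-1}\to X_{l,\beta-l}$ is the zero map for every $\beta\ge 1$ and every $l$ with $1\le l\le\beta$. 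I would isolate this as a lemma and prove it by strong induction on $l$.

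For the induction I would feed in the defining recursion $\sigma^l_{l,\beta-l}=-\sum_{\imath=0}^{l-1}\sigma^0_{l,\beta-l}\xcirc d^{l-\imath}_{\imath,\beta-\imath}\xcirc\sigma^{\imath}_{\imath,\beta-\imath}$ (its $\alpha=-1$ instance, where $\sigma^0_{0\beta}$ denotes the map $Y_\beta\to X_{0\beta}$ of Proposition~\ref{las filas son contractiles}). Each summand with $1\le\imath\le l-1$ carries the factor $\sigma^{\imath}_{\imath,\beta-\imath}\xcirc\sigma^{-1}_\beta$, which is zero by the inductive hypothesis, so only the $\imath=0$ term survives and
$$\sigma^l_{l,\beta-l}\xcirc\sigma^{-1}_\beta=-\sigma^0_{l,\beta-l}\xcirc d^l_{0\beta}\xcirc\sigma^0_{0\beta}\xcirc\sigma^{-1}_\beta.$$
For $l\ge 3$ this vanishes at once because $d^l=0$ by Theorem~\ref{calculo de los dl}. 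For $l\in\{1,2\}$ the argument runs as follows: the explicit formula for $\sigma^{-1}$ shows that $\sigma^0_{0\beta}\xcirc\sigma^{-1}_\beta$ maps $Y_{\beta-1}$ into the subgroup $J_0:=\langle w_{y^\jmath}:0\le\jmath<t\rangle$ of $X_{0\beta}=E$ — in fact into $\langle w_{y^\jmath}:\jmath\le t-2\rangle$ when $\beta$ is odd and into $\mathds{Z}w_1$ when $\beta$ is even; then the formulas of Theorem~\ref{calculo de los dl} for $d^1_{0\beta}$ and $d^2_{0\beta}$, together with the observation that all crossed-product products $w_{y^a}w_{y^l}$ that occur here have $a+l<t$ (so no positive power of $x$ is produced), yield that $d^l_{0\beta}$ sends this element back into $J_0$; and finally $\sigma^0_{l,\beta-l}$ annihilates $J_0$ for every $l\ge 1$, because $\sigma^0_{l,\beta-l}(x^{\imath}w_{y^{\jmath}})$ equals $\sum_{s=0}^{\imath-1}x^sw_{y^{\jmath}}$, an empty sum when $\imath=0$, for $l$ odd, and $\delta_{\imath,u-1}w_{y^{\jmath}}$, which is zero when $\imath=0$ precisely because $u>1$, for $l$ even. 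This finishes the induction, and specializing to $\beta=n+1$ and to $z=\upsilon_n(\bx)\in Y_n$ gives the proposition.

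The step I expect to be the main obstacle is the base cases $l=1,2$: one must follow the crossed-product multiplication carefully enough to be certain that $d^1_{0\beta}$ and $d^2_{0\beta}$ introduce no term of positive $x$-degree when applied to the very particular element $\sigma^0_{0\beta}\xcirc\sigma^{-1}_\beta(z)$ — it is only the $x$-degree-zero part of $E$ that $\sigma^0_{l,\beta-l}$ kills, and it is exactly here that the standing hypothesis $u>1$ enters. Everything else is a routine induction resting on the recursion for the maps $\sigma^l$ and on the vanishing $d^l=0$ for $l>2$ established in Theorem~\ref{calculo de los dl}.
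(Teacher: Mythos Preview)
Your argument is correct and follows essentially the same route as the paper's own proof: both reduce the claim to showing that $\sigma^l_{l,\beta-l}$ vanishes on the image of $\sigma^{-1}_\beta$ for $l\ge 1$, handle the cases $l=1,2$ by direct inspection of $\sigma^0_{l,\beta-l}\circ d^l_{0\beta}\circ\sigma^0_{0\beta}$, and invoke $d^l=0$ for $l>2$ together with the recursion for $\sigma^l$ to finish. Your packaging via the subgroup $J_0=\langle w_{y^\jmath}\rangle$ and the observation that $\sigma^0_{l,\cdot}$ kills $J_0$ is a clean way to organize the base cases; the paper simply writes out the same verifications elementwise.
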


\begin{proof} By the definitions of $\ov{\sigma}$, $\upsilon$ and $\sigma^{-1}$, it suffices to prove that
\begin{equation}\label{para todo l}
\sigma^l_{l,2\beta-l}(1) =0\quad\text{and}\quad \sigma^l_{l,2\beta+1-l}(y^k) = 0\qquad\text{for all $l\ge 1$ and $0\le k<t-1$.}
\end{equation}
By the definition of $\sigma^l$ and Theorem~\ref{calculo de los dl}, for this it sufficient to consider the cases $l=1$ and $l=2$. We have
\begin{equation*}
\sigma^1_{1,2\beta-1}(1) = -\sigma^0_{1,2\beta-1} \xcirc d^1_{0,2\beta}\xcirc \sigma^0_{0,2\beta}(1)  = 0\quad\text{and}\quad \sigma^1_{1,2\beta}(y^k)  = -\sigma^0_{1,2\beta} \xcirc d^1_{0,2\beta+1}\xcirc \sigma^0_{0,2\beta+1}(y^k)= 0.
\end{equation*}
Therefore,
\begin{equation*}
\sigma^2_{2,2\beta-2}(1) = -\sigma^0_{2,2\beta-2} \xcirc d^2_{0,2\beta}\xcirc \sigma^0_{0,2\beta}(1) = 0 \quad\text{and}\quad \sigma^2_{2,2\beta-1}(y^k) = -\sigma^0_{2,2\beta-1} \xcirc d^2_{0,2\beta-1}\xcirc \sigma^0_{0,2\beta-1}(y^k) = 0,
\end{equation*}
which finishes the proof.
\end{proof}

\begin{remark}\label{rem 2.8} Let  $0\le \jmath<t$. A direct computation shows that
$$
\sigma^0\xcirc\sigma^{-1}\xcirc\upsilon(x^{\imath}w_{y^{\jmath}}) = \begin{cases} \sum_{l=0}^{\jmath-1} w_{y^l} &\text{if $x^{\imath}w_{y^{\jmath}}\in X_{0,2\beta}$,}\\ \delta_{t-1,\jmath} w_1 &\text{if $x^{\imath}w_{y^{\jmath}}\in X_{0,2\beta+1}$.}\end{cases}
$$
\end{remark}

\begin{proposition}\label{prop 2.9} Let $0\le \imath <u$ and $0\le\jmath<t$. For all $\alpha\ge 0$ and $\beta\ge 1$, we have
$$
\sigma^1_{\alpha+2,2\beta}(x^{\imath}w_{y^{\jmath}}) = (-1)^{\alpha+1} \delta_{u-1,\imath}\delta_{t-1,\jmath} w_1\qquad\text{and}\qquad \sigma^1_{\alpha+2,2\beta-1}(x^{\imath}w_{y^{\jmath}})  = (-1)^{\alpha+1} \delta_{u-1,\imath} \sum_{l=0}^{\jmath-1} w_{y^l}.
$$
Moreover, $\sigma^l_{\alpha+l+1,\beta-l} = 0$ for all $l\ge 2$, $\alpha\ge 0$ and $\beta\ge l$.
\end{proposition}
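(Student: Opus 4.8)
The plan is to compute $\sigma^1$ directly from its defining recursion $\sigma^l_{\alpha+l+1,\beta-l}\coloneqq-\sum_{\imath=0}^{l-1}\sigma^0\xcirc d^{l-\imath}\xcirc\sigma^{\imath}$ and then to bootstrap to the higher $\sigma^l$ using the vanishing $d^l=0$ for $l>2$ from Theorem~\ref{calculo de los dl}.

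For the two displayed formulas (the case $l=1$) the recursion reads $\sigma^1_{\alpha+2,\beta-1}=-\sigma^0\xcirc d^1\xcirc\sigma^0$, and I would simply unwind it. To evaluate $\sigma^1_{\alpha+2,\bullet}$ on $x^{\imath}w_{y^{\jmath}}$ one first applies the appropriate inner $\sigma^0_{\alpha+1,\bullet}$, which by Proposition~\ref{las filas son contractiles} is $\sum_{l=0}^{\imath-1}x^lw_{y^{\jmath}}$ when $\alpha$ is even and $\delta_{\imath,u-1}w_{y^{\jmath}}$ when $\alpha$ is odd; then applies $d^1$, whose value on $w_1$ is read off from Theorem~\ref{calculo de los dl} and which, being a right $E$-module map, is extended by multiplication in $E=\mathds{Z}[C_u\rtimes_{\zeta}C_t]$ — here one must remember that the cocycle $\zeta$ contributes a factor $x$ whenever the $C_t$-components wrap around, i.e. $w_{y^h}w_{y^{h'}}$ equals $w_{y^{h+h'}}$ if $h+h'<t$ and $xw_{y^{h+h'-t}}$ if $h+h'\ge t$; and finally applies the appropriate outer $\sigma^0_{\alpha+2,\bullet}$, again splitting on the parity of $\alpha$. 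Running through the four cases fixed by the parities of $\alpha$ and of the second index, the intermediate sums either telescope or are killed for exponent reasons (using $\imath<u$, so that the exponent $u-1$ can only be reached via a single wrap-around), and what survives is exactly the asserted expression. This is the same bookkeeping as in the proof of Theorem~\ref{calculo de los dl}, with the two branches of $\sigma^0$ added in.

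For the ``moreover'' part, once $d^l=0$ for $l>2$ the recursion collapses: for $l\ge 3$ it becomes $\sigma^l=-\sigma^0\xcirc d^2\xcirc\sigma^{l-2}-\sigma^0\xcirc d^1\xcirc\sigma^{l-1}$, with an extra term $-\sigma^0\xcirc d^2\xcirc\sigma^0$ when $l=2$. Two structural facts then do the work. First, $\sigma^0$ annihilates the subgroup $W\coloneqq\bigoplus_h\mathds{Z}\,w_{y^h}$ of each $X_{\alpha\beta}$: on an odd-indexed $X$ because $w_{y^h}=x^0w_{y^h}$ makes $\sigma^0$ an empty sum, and on an even-indexed $X$ because there $\sigma^0$ needs $x$-exponent $u-1\ge 1$. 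Second, by the $l=1$ formulas just proved, $\sigma^1$ has image in $W$, and on the elements that actually occur in its image — namely $w_1$ and the interval sums $\sum_{l=0}^{\jmath-1}w_{y^l}$ with $\jmath<t$ — the map $d^1$ again takes values in $W$ (a full sum $\sum_h w_{y^h}$, or a telescoped difference $w_1-w_{y^{\jmath}}$, with no factors of $x$ since $\jmath-1<t-1$); moreover $d^2$, being $0$ or $-\ide$, preserves $W$. A short parity check then gives $\sigma^0\xcirc d^2\xcirc\sigma^0=0$, $\sigma^0\xcirc d^1\xcirc\sigma^1=0$ and $\sigma^0\xcirc d^2\xcirc\sigma^1=0$, hence $\sigma^2=0$; and induction on $l$ finishes: for $l=3$ we get $\sigma^3=-\sigma^0\xcirc d^2\xcirc\sigma^1-\sigma^0\xcirc d^1\xcirc\sigma^2=0$, and for $l\ge 4$ both $\sigma^{l-1}$ and $\sigma^{l-2}$ vanish by the inductive hypothesis.

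The only delicate point is the parity bookkeeping of the $l=1$ computation: one has to track carefully the factors of $x$ produced by $\zeta$ when the $C_t$-exponents reach $t$, and check in each subcase that the terms surviving $d^1$ have $x$-exponent either $0$ or $u-1$, so that the outer $\sigma^0$ returns a clean $w_1$ or $\sum_{l=0}^{\jmath-1}w_{y^l}$ rather than a longer sum. Nothing deeper than the multiplication table of $E$ and the two pieces of $\sigma^0$ is involved; it is of the same flavour as the arguments in Theorem~\ref{calculo de los dl} and Remark~\ref{rem 2.8}.
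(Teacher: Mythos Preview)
Your proposal is correct and follows essentially the same route as the paper: both compute $\sigma^1$ by unwinding $-\sigma^0\circ d^1\circ\sigma^0$ with a parity split, then handle the vanishing of $\sigma^{l\ge 2}$ by combining $d^{l>2}=0$ with a case analysis on whether $d^2$ vanishes or the inner $\sigma$ lands where the outer $\sigma^0$ kills it. Your packaging via the subgroup $W=\bigoplus_h\mathds{Z}\,w_{y^h}$ (observing $\sigma^0|_W=0$, $\ima\sigma^1\subset W$, and that $d^1,d^2$ keep the relevant elements in $W$) is a tidy way to organize exactly the computations the paper does case by case, but it is not a different argument.
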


\begin{proof} We sketch the proof. By the definition of $\sigma^1$ above Proposition~\ref{cont nuestra}, we have
\begin{align*}
&\sigma^1_{\alpha+2,2\beta}(x^{\imath}w_{y^{\jmath}})= - \sigma^0_{\alpha+2,2\beta}\xcirc d^1_{\alpha+1,2\beta+1} \xcirc \sigma^0_{\alpha+1,2\beta+1}(x^{\imath}w_{y^{\jmath}}) = (-1)^{\alpha+1} \delta_{u-1,\imath}\delta_{t-1,\jmath} w_1\\
\shortintertext{and}
&\sigma^1_{\alpha+2,2\beta-1}(x^{\imath}w_{y^{\jmath}}) = - \sigma^0_{\alpha+2,2\beta-1}\xcirc d^1_{\alpha+1,2\beta} \xcirc \sigma^0_{\alpha+1,2\beta}(x^{\imath}w_{y^{\jmath}}) = (-1)^{\alpha+1} \delta_{u-1,\imath} \sum_{l=0}^{\jmath-1} w_{y^l},
\end{align*}
which proves the statement for $\sigma^1$. Our next purpose is to prove that $\sigma^2_{\alpha+3,\beta-2} = 0$. We assert that $\sigma^0_{\alpha+3,\beta-2}\xcirc d^2_{\alpha+1,\beta}\xcirc \sigma^0_{\alpha+1,\beta}\! =\! 0$. In fact, if $\alpha$ is even this follows from the fact that $d^2_{\alpha+1,\beta} = 0$, while if $\alpha$ is odd, then the assertion is also true, because
$$
\sigma^0_{\alpha+3,\beta-2}\xcirc d^2_{\alpha+1,\beta}\xcirc \sigma^0_{\alpha+1,\beta}(x^{\imath}w_{y^{\jmath}}) =  - \delta_{u-1,\imath} \sigma^0_{\alpha+3,\beta-2}\bigl(x^{u-1} w_{y^{\jmath}}\bigr)= 0.
$$
Thus,
\begin{equation*}
\sigma^2_{\alpha+3,\beta-2}(x^{\imath}w_{y^{\jmath}}) = - \sigma^0_{\alpha+3,\beta-2}\xcirc d^1_{\alpha+2,\beta-1} \xcirc \sigma^1_{\alpha+2,\beta-1} (x^{\imath}w_{y^{\jmath}}) = 0,
\end{equation*}
as desired. Since, moreover $d^3 = 0$, in order to prove that $\sigma^3_{\alpha+4,\beta-3} = 0$ it suffices to check that the equality $\sigma^0_{\alpha+4,\beta-3} \xcirc d^2_{\alpha+2,\beta-1}\xcirc \sigma^1_{\alpha+2,\beta-1} = 0$ holds. If $\alpha$ is odd this follows from the fact that $d^2_{\alpha+2,\beta-1} = 0$, while if $\alpha$ is even, then a direct computation proves that we also have $\sigma^0_{\alpha+4,\beta-3} \xcirc d^2_{\alpha+2,\beta-1}\xcirc \sigma^1_{\alpha+2,\beta-1}(x^{\imath} w_{y^{\jmath}}) =  0$. The proof that $\sigma^l_{\alpha+l+1,\beta-l} = 0$ for $l\ge 4$, follows easily by induction.
\end{proof}

\subsubsection{Comparison with the normalized bar resolution}\label{comparison maps} Let $(\ov{E}^{\ot *}\ot E,b'_*)$ be the normalized bar resolution of $\mathds{Z}$ as a trivial right $E$-module. It is easy to see that there exist unique morphisms of right $E$-module chain complexes
$$
\phi_*\colon (X_*,d_*)\longrightarrow (\ov{E}^{\ot *}\ot E,b'_*)\qquad\text{and} \qquad\varphi_*\colon (\ov{E}^{\ot *}\ot E,b'_*)\longrightarrow (X_*,d_*),
$$
such that

\begin{itemize}[itemsep=0.7ex,topsep=1.0ex,label=-]

\item $\phi_0 = \varphi_0 = \ide_E$,

\item $\varphi_{n+1}(\bx\ot w_1) = \ov{\sigma}_{n+1}\xcirc \varphi_n\xcirc b'_{n+1}(\bx\ot w_1)$ for all $n\ge 0$ and $\bx\in \ov{E}^{\ot {n+1}}$,

\item the restriction of $\phi_{n+1}$ to $X_{\imath,n+1-\imath}$ satisfies $\phi_{n+1}(w_1) = \ov{\xi}_{n+1}\xcirc \phi_n\xcirc d_{n+1}(w_1)$, where $\ov{\xi}_{n+1}$ is as in subsection~\ref{Hochschild homology of algebras}.

\end{itemize}

\begin{proposition}\label{homotopia} $\varphi_*\xcirc\phi_* =\ide$ and $\phi_*\xcirc \varphi_*$ is homotopically equivalent to the identity map. A homotopy is the one degree map $\omega_{*+1}\colon \phi_*\xcirc\varphi_* \to \ide$, recursively defined by
\begin{equation}
\omega_1\coloneqq  0\quad\text{and}\quad\omega_{n+1}(\byy)\coloneqq \ov{\xi}_{n+1} \xcirc(\phi_n \xcirc\varphi_n -\ide -\omega_n\xcirc b'_n)(\byy)\quad\text{for $n\ge 0$ and $\byy\in \ov{E}^{\ot n}\ot \mathds{Z}w_1$.}\label{aaa}
\end{equation}
Moreover, $\varphi_*\xcirc \omega_* = 0$, $\omega_{*+1}\xcirc \phi_* = 0$ and $\omega_{*+1}\xcirc \omega_* = 0$.
\end{proposition}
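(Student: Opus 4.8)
The plan is to prove all the stated facts at once, by a single induction on the homological degree $n$: that $\phi_*$ and $\varphi_*$ are chain maps; that $\varphi_n\xcirc\phi_n=\ide$; that $b'_{n+1}\xcirc\omega_{n+1}+\omega_n\xcirc b'_n=\phi_n\xcirc\varphi_n-\ide$; and the three side conditions $\omega_{n+1}\xcirc\phi_n=0$, $\varphi_n\xcirc\omega_n=0$ and $\omega_{n+1}\xcirc\omega_n=0$. Since $\phi_*$ and $\varphi_*$ are right $E$-linear, since $\omega_*$ is the right $E$-linear extension of the map prescribed by~\eqref{aaa} on $\ov{E}^{\ot n}\ot\mathds{Z}w_1$, and since $\ov{E}^{\ot n}\ot\mathds{Z}w_1$ generates $\ov{E}^{\ot n}\ot E$ as a right $E$-module, it suffices to verify each identity on the generators $w_1$ of the summands $X_{\alpha\beta}$, respectively on $\ov{E}^{\ot n}\ot\mathds{Z}w_1$. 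The standing ingredients are the contracting homotopy relation $b'_{n+1}\xcirc\ov{\xi}_{n+1}+\ov{\xi}_n\xcirc b'_n=\ide$ of the normalized bar resolution (with the augmentation correction in degree $0$), the relation $d_{n+1}\xcirc\ov{\sigma}_{n+1}+\ov{\sigma}_n\xcirc d_n=\ide$ from Proposition~\ref{cont nuestra}, and $b'\xcirc b'=0$, $d\xcirc d=0$.

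That $\phi_*$ and $\varphi_*$ are chain maps is immediate by induction from their recursive definitions; for instance $d_{n+1}\xcirc\varphi_{n+1}(\byy)=(\ide-\ov{\sigma}_n\xcirc d_n)\xcirc\varphi_n\xcirc b'_{n+1}(\byy)=\varphi_n\xcirc b'_{n+1}(\byy)-\ov{\sigma}_n\xcirc\varphi_{n-1}\xcirc b'_n\xcirc b'_{n+1}(\byy)=\varphi_n\xcirc b'_{n+1}(\byy)$ by the inductive hypothesis, and symmetrically for $\phi_*$. For $\varphi_n\xcirc\phi_n=\ide$ the case $n=0$ is $\ide_E\xcirc\ide_E$, and for the inductive step one takes a generator $w_1\in X_{\alpha,n+1-\alpha}$; since $\ov{\xi}_{n+1}$ takes values in $\ov{E}^{\ot n+1}\ot\mathds{Z}w_1$, the recursion for $\varphi_{n+1}$ may be applied to it, giving
\[
\varphi_{n+1}\xcirc\phi_{n+1}(w_1)=\varphi_{n+1}\bigl(\ov{\xi}_{n+1}\xcirc\phi_n\xcirc d_{n+1}(w_1)\bigr)=\ov{\sigma}_{n+1}\xcirc\varphi_n\xcirc b'_{n+1}\xcirc\ov{\xi}_{n+1}\xcirc\phi_n\xcirc d_{n+1}(w_1).
\]
Now $b'_{n+1}\xcirc\ov{\xi}_{n+1}=\ide-\ov{\xi}_n\xcirc b'_n$, while $b'_n\xcirc\phi_n=\phi_{n-1}\xcirc d_n$ and $d_n\xcirc d_{n+1}=0$, so the right-hand side collapses to $\ov{\sigma}_{n+1}\xcirc\varphi_n\xcirc\phi_n\xcirc d_{n+1}(w_1)=\ov{\sigma}_{n+1}\xcirc d_{n+1}(w_1)$ by the inductive hypothesis. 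By $\ov{\sigma}_{n+1}\xcirc d_{n+1}=\ide-d_{n+2}\xcirc\ov{\sigma}_{n+2}$, it remains only to show that $\ov{\sigma}_{n+2}$ annihilates the module generator $w_1$; this follows from the explicit descriptions in Propositions~\ref{prim prop of ov sigma} and~\ref{prop 2.9} (the terms with $\sigma^l$, $l\ge 2$, vanish by Proposition~\ref{prop 2.9}; those with $l\le 1$, and the term involving $\sigma^{-1}$, vanish on $w_1$ because $u>1$ and because $\upsilon(w_1)$ is the unit of $\mathds{Z}[C_t]$).

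Granting $\varphi_*\xcirc\phi_*=\ide$, the remaining assertions are short inductions relying only on~\eqref{aaa} and on $b'_{n+1}\xcirc\ov{\xi}_{n+1}=\ide-\ov{\xi}_n\xcirc b'_n$. For the homotopy identity, substituting~\eqref{aaa} into $b'_{n+1}\xcirc\omega_{n+1}$ and applying that relation gives, on $\ov{E}^{\ot n}\ot\mathds{Z}w_1$, the equality $b'_{n+1}\xcirc\omega_{n+1}=(\phi_n\xcirc\varphi_n-\ide-\omega_n\xcirc b'_n)-\ov{\xi}_n\xcirc b'_n\xcirc(\phi_n\xcirc\varphi_n-\ide-\omega_n\xcirc b'_n)$; the last summand vanishes because $b'_n\xcirc(\phi_n\xcirc\varphi_n-\ide-\omega_n\xcirc b'_n)=0$, which one gets from $b'_n\xcirc\phi_n=\phi_{n-1}\xcirc d_n$, $d_n\xcirc\varphi_n=\varphi_{n-1}\xcirc b'_n$, the degree $n-1$ homotopy identity and $b'\xcirc b'=0$, so $b'_{n+1}\xcirc\omega_{n+1}+\omega_n\xcirc b'_n=\phi_n\xcirc\varphi_n-\ide$ there, and hence everywhere by $E$-linearity. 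For the side conditions: evaluating $\phi_n\xcirc\varphi_n-\ide-\omega_n\xcirc b'_n$ at $\phi_n(w_1)\in\ov{E}^{\ot n}\ot\mathds{Z}w_1$ yields $0$ by $\varphi_n\xcirc\phi_n=\ide$, $b'_n\xcirc\phi_n=\phi_{n-1}\xcirc d_n$ and the degree $n-1$ case $\omega_n\xcirc\phi_{n-1}=0$, whence $\omega_{n+1}\xcirc\phi_n=0$; similarly $\varphi_n\xcirc\omega_n=0$ follows from $\varphi_*\xcirc\phi_*=\ide$ together with the lower-degree relations $b'_{n-1}\xcirc(\phi_{n-1}\xcirc\varphi_{n-1}-\ide-\omega_{n-1}\xcirc b'_{n-1})=0$ and $\varphi_{n-1}\xcirc\omega_{n-1}=0$; and then $\omega_{n+1}\xcirc\omega_n=0$ follows from the previous relations and $\omega_n\xcirc\omega_{n-1}=0$.

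The hardest step is $\varphi_*\xcirc\phi_*=\ide$, since it is the only point requiring more than the formal contracting homotopy property of $\ov{\sigma}$ — namely the vanishing $\ov{\sigma}_{n+2}(w_1)=0$ on module generators — so one must carry along Propositions~\ref{prim prop of ov sigma} and~\ref{prop 2.9} throughout. The only other delicacy is the bookkeeping of signs coming from the factor $(-1)^{n+1}$ inside $\ov{\xi}$ (and the signs in $\ov{\sigma}$), which must be tracked through the recursions but is mechanical once the induction is organised as above.
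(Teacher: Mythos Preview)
Your proposal is correct and follows essentially the same approach as the paper: an induction on $n$ reducing $\varphi_{n+1}\xcirc\phi_{n+1}=\ide$ to the vanishing $\ov{\sigma}_{n+2}(w_1)=0$ (via Remark~\ref{rem 2.8} and Propositions~\ref{prim prop of ov sigma} and~\ref{prop 2.9}), then the homotopy identity by expanding $b'_{n+1}\xcirc\ov{\xi}_{n+1}=\ide-\ov{\xi}_n\xcirc b'_n$ and using $b'_n\xcirc T_n=0$, and finally the side conditions by straightforward recursion. Your closing remark about tracking the sign $(-1)^{n+1}$ in $\ov{\xi}$ is unnecessary, since only the contracting-homotopy identity for $\ov{\xi}$ is ever invoked, not its explicit formula.
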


\begin{proof} We prove the first two assertions by induction. Clearly $\varphi_0\xcirc\phi_0 =\ide$. Assume that $\varphi_n\xcirc\phi_n =\ide$. Since the image of $\ov{\xi}_{n+1}$ is included in $\ov{E}^{\ot {n+1}}\ot \mathds{Z}w_1$, we have
\begin{align*}
\varphi_{n+1}\xcirc\phi_{n+1}(\byy)&=\ov{\sigma}_{n+1}\xcirc\varphi_n\xcirc b'_{n+1}\xcirc\ov{\xi}_{n+1}\xcirc\phi_n\xcirc d_{n+1}(\byy)\\
& = \ov{\sigma}_{n+1}\xcirc\varphi_n\xcirc\phi_n\xcirc d_{n+1}(\byy) -\ov{\sigma}_{n+1}\xcirc \varphi_n\xcirc\ov{\xi}_n\xcirc b'_n\xcirc\phi_n\xcirc d_{n+1}(\byy)\\
&=\ov{\sigma}_{n+1}\xcirc d_{n+1}(\byy)\\
&= \byy - d_{n+2}\xcirc\ov{\sigma}_{n+2}(\byy),
\end{align*}
for $\byy\in X_{\imath,n+1-\imath}\cap \mathds{Z}w_1$. So, to conclude that $\varphi_{n+1}\xcirc\phi_{n+1} =\ide$ it suffices to check that $\ov{\sigma}_{n+2}(w_1) = 0$, which follows easily from Remark~\ref{rem 2.8} and Propositions~\ref{prim prop of ov sigma} and~\ref{prop 2.9}. Next we prove the second assertion. Clearly $\phi_0\xcirc\varphi_0 - \ide = 0 = b'_1\xcirc \omega_1$. Let $U_n \coloneqq \phi_n\xcirc\varphi_n-\ide$ and $T_n \coloneqq  U_n -\omega_n\xcirc b'_n$. Assuming that $b'_n\xcirc\omega_n +\omega_{n-1}\xcirc b'_{n-1} = U_{n-1}$, we get that
\begin{align*}
b'_{n+1}\xcirc\omega_{n+1}(\byy)+\omega_n\xcirc b'_n(\byy) &= b'_{n+1}\xcirc\ov{\xi}_{n+1}\xcirc T_n(\byy)+\omega_n\xcirc b'_n(\byy)\\
& = T_n(\byy) -\ov{\xi}_n\xcirc b'_n\xcirc T_n(\byy) +\omega_n\xcirc b'_n(\byy)\\
& = U_n(\byy) -\ov{\xi}_n\xcirc b'_n\xcirc U_n(\byy) +\ov{\xi}_n\xcirc b'_n\xcirc\omega_n\xcirc b'_n(\byy)\\
& = U_n(\byy) -\ov{\xi}_n\xcirc U_{n-1}\xcirc b'_n(\byy) +\ov{\xi}_n\xcirc b'_n\xcirc\omega_n\xcirc b'_n(\byy)\\
& = U_n(\byy) -\ov{\xi}_n\xcirc U_{n-1}\xcirc b'_n(\byy) + \ov{\xi}_n\xcirc U_{n-1}\xcirc b'_n(\byy) - \ov{\xi}_n\xcirc \omega_{n-1} \xcirc b'_{n-1}\xcirc b'_n(\byy)\\
&= U_n(\byy),
\end{align*}
for $\byy \in \ov{E}^{\ot n}\ot \mathds{Z}w_1$,
where the first equality holds by identity~\eqref{aaa}; the second one, since $\ov{\xi}$ is a contracting homotopy; the third one, by the definition of $T_n$; the fourth one, since $U_*$ is a morphism; and the fifth one, by the assumption.

It remains to prove the last assertions. We check the last equality assuming that $\omega_{*+1}\xcirc \phi_* = 0$ and $\varphi_*\xcirc\omega_* = 0$, and let the other ones, which are easier, to the reader. It is evident that $\omega_2\xcirc \omega_1 = 0$. Assume that $n\ge 1$ and $\omega_{n+1}\xcirc \omega_n = 0$ and let $\byy\in \ov{E}^{\ot n}\ot \mathds{Z}w_1$. Since
$$
\omega_{n+1}\xcirc b'_{n+1}\xcirc\omega_{n+1}(\byy) = \omega_{n+1}\xcirc(\phi_n\xcirc \varphi_n(\byy)-\byy - \omega_n\xcirc b'_n(\byy)) = - \omega_{n+1}(\byy),
$$
we have
$$
\omega_{n+2}\xcirc \omega_{n+1}(\byy) = \ov{\xi}_{n+2} \xcirc(\phi_{n+1}\xcirc\varphi_{n+1} -\ide - \omega_{n+1}\xcirc b'_{n+1})\xcirc\omega_{n+1}(\byy) = - \ov{\xi}_{n+2} \xcirc (\omega_{n+1}+\omega_{n+1}\xcirc b'_{n+1}\xcirc\omega_{n+1})(\byy) = 0,
$$
as desired.
\end{proof}

For each $\alpha,\beta,n\in \mathds{N}_0$ such that $\alpha+\beta=n$, we let $\varphi_n^{\alpha\beta} \colon E\ot\ov{E}^{\ot n} \to X_{\alpha\beta}$ denote the unique map such that $\varphi_n = \sum_{\alpha+\beta =n} \varphi_n^{\alpha\beta}$.

\begin{remark}\label{calculo de morfismos} A direct computation using Proposition~\ref{prim prop of ov sigma}, Remark~\ref{rem 2.8} and the definitions of $\phi_*$ and $\varphi_*$, shows that
\begin{align*}
& \phi_1(w_1) =  w_y\ot w_1 &&\text{on $X_{01}$},\\
& \phi_1(w_1) = -xw_1\ot w_1 &&\text{on $X_{10}$},\\
& \phi_2 (w_1) = - \sum_{h=1}^{t-1} w_y\ot w_{y^h}\ot w_1 &&\text{on $X_{02}$,}\\
& \phi_2 (w_1) = w_y\ot xw_1\ot w_1 - xw_1\ot w_y\ot w_1 &&\text{on $X_{11}$,}\\
& \phi_2 (w_1) = - \sum_{h=1}^{u-1} xw_1\ot x^hw_1\ot w_1 &&\text{on $X_{20}$},\\
& \varphi_1^{01}(x^{\imath}w_{y^{\jmath}}\ot w_1) = \sum_{h=0}^{\jmath-1} w_{y^h}\\
\shortintertext{and}
& \varphi_1^{10}(x^{\imath}w_{y^{\jmath}}\ot w_1) = - \sum_{h=0}^{\imath-1} x^h w_{y^{\jmath}},
\end{align*}
where $0\le\imath< u$ and $0\le\jmath< t$.
\end{remark}

\begin{remark}\label{calculo de la homotopia} A direct computation shows that
$$
\omega_2(x^{\imath}w_{y^{\jmath}}\ot w_1) = \sum_{h=0}^{\imath-1} xw_1\ot x^h w_{y^{\jmath}} \ot w_1+\sum_{h=1}^{\jmath-1} w_y \ot w_{y^h} \ot w_1,
$$
where $0\le\imath< u$ and $0\le\jmath< t$.
\end{remark}

\subsection{A complex for the homology of cyclic groups}\label{subsection:A complex for the homology of cyclic groups}

Let $v$, $u$, $t$, $C_u \rtimes_{\zeta}C_t$ and $E$ be as in Section~\ref{una resolucion}. Let $D\coloneqq \mathds{Z}[C_v]$ and let $\ov{D}\coloneqq D/\mathds{Z}1$. Recall that the map $f\colon C_u \rtimes_{\zeta}C_t \to C_v$, defined by $f(x^{\imath}w_{y^{\jmath}})\coloneqq g^{t\imath+\jmath}$, where $0\le \jmath<t$, is a group isomorphism. Here we will obtain a chain complex giving the group homology of $C_v$ with coefficients in a commutative group $M$, considered as a left $D$-module via the trivial action (that is $g^{\imath} m = m$). We are interested in the case $M\coloneqq \ov{D}^{\ot s}/\sh(\ov{D}^{\ot s})$ with $s\in \mathds{N}$.

For each $\alpha,\beta\in \mathds{N}_0$, let $M_{\alpha\beta}$ be a copy of $M$. Let $\ov{d}_{\alpha\beta}^l \colon M_{\alpha\beta} \to M_{\alpha+l-1,\beta-l}$ ($\alpha,\beta\ge 0$, $0\le l\le \min(2,\beta)$ and $\alpha+l>0$) be the morphisms defined by:
\begin{equation}\label{pepito}
\begin{aligned}
&\ov{d}^0_{2\alpha-1,\beta}(m) \coloneqq 0, &&\qquad \ov{d}^1_{\alpha,2\beta-1}(m) \coloneqq 0, &&\qquad \ov{d}^2_{2\alpha,\beta}(m) \coloneqq -m,\\
&\ov{d}^0_{2\alpha,\beta}(m) \coloneqq um, &&\qquad \ov{d}^1_{\alpha,2\beta}(m) \coloneqq (-1)^{\alpha+1} tm, &&\qquad\ov{d}^2_{2\alpha+1,\beta}(m) \coloneqq 0.
\end{aligned}
\end{equation}
By the definition of the maps $d^0_{2\alpha-1,\beta}$ and $d^0_{2\alpha,\beta}$, given above Proposition~\ref{las filas son contractiles}, and by Theorem~\ref{calculo de los dl}, tensoring $M$ over $D$ with $(X_*,d_*)$ and using the identifications $\theta_{\alpha\beta}\colon M_{\alpha\beta} \to X_{\alpha\beta}\ot_D M$, given by $\theta_{\alpha\beta}(m) \coloneqq w_1\ot m$, we obtain the chain complex
\begin{equation}\label{a1}
\begin{tikzpicture}[baseline=(current bounding box.center)]
\begin{scope}[yshift=0cm,xshift=0cm]
\matrix(BPcomplex) [matrix of math nodes, row sep=0em, text height=1.5ex, text
depth=0.25ex, column sep=2.5em, inner sep=0pt, minimum height=5mm, minimum width =6mm]
{\ov{X}_0(M) & \ov{X}_1(M) & \ov{X}_2(M) & \ov{X}_3(M) & \ov{X}_4(M) & \ov{X}_5(M) &\cdots,\\};
\draw[<-] (BPcomplex-1-1) -- node[above=1pt,font=\scriptsize] {$\ov{d}_1$} (BPcomplex-1-2);
\draw[<-] (BPcomplex-1-2) -- node[above=1pt,font=\scriptsize] {$\ov{d}_2$} (BPcomplex-1-3);
\draw[<-] (BPcomplex-1-3) -- node[above=1pt,font=\scriptsize] {$\ov{d}_3$} (BPcomplex-1-4);
\draw[<-] (BPcomplex-1-4) -- node[above=1pt,font=\scriptsize] {$\ov{d}_4$} (BPcomplex-1-5);
\draw[<-] (BPcomplex-1-5) -- node[above=1pt,font=\scriptsize] {$\ov{d}_5$} (BPcomplex-1-6);
\draw[<-] (BPcomplex-1-6) -- node[above=1pt,font=\scriptsize] {$\ov{d}_6$} (BPcomplex-1-7);
\end{scope}
\end{tikzpicture}
\end{equation}
where $\ov{X}_n(M)\coloneqq \bigoplus_{\alpha+\beta=n} M_{\alpha\beta}$ and $\ov{d}_n$ is the morphism of abelian groups defined by
\begin{equation}\label{cuentapal}
\ov{d}_n(m)\coloneqq \begin{cases} \displaystyle{\sum_{l=1}^{\min(n,2)} \ov{d}^l_{0n}(m)} &\text{if $m\in M_{0n}$,}\\ \displaystyle{\sum_{l=0}^{\min(n-\alpha,2)} \ov{d}^l_{\alpha,n-\alpha}(m)} &\text{if $m\in M_{\alpha,n-\alpha}$ with $\alpha>0$.}\end{cases}
\end{equation}
Let $\cramped{\bigl(\ov{D}^{\ot *}\ot M,b_*\bigr)}$ be the canonical normalized complex of $C_v$ with coefficients in~$M$.~Re\-call that there is a canonical identification $\bigl(\ov{D}^{\ot *}\ot M,b_*\bigr)\simeq \bigl(\ov{D}^{\ot *}\ot D,b'_*\bigr)\ot_D M$. Let
\begin{equation}\label{mapas basicos}
\ov{\phi}_* \colon (\ov{X}_*(M),\ov{d}_*)\longrightarrow\bigl(\ov{D}^{\ot *}\ot M,b_*\bigr) \quad\text{and}\quad \ov{\varphi}_*\colon \bigl(\ov{D}^{\ot *}\ot M,b_*\bigr)\longrightarrow (\ov{X}_*(M),\ov{d}_*)
\end{equation}
be the morphisms of chain complexes induced by $\phi_*$ and $\varphi_*$, respectively. By definition $\ov{\phi}_0 = \ov{\varphi}_0 = \ide_M$. Moreover, by Proposition~\ref{homotopia} we know that $\ov{\varphi}_*\xcirc \ov{\phi}_* =\ide$ and $\ov{\phi}_*\xcirc\ov{\varphi}_*$ is ho\-motopically equivalent to the identity map. More precisely, a homotopy $\ov{\omega}_{*+1}$, from $\ov{\phi}_*\xcirc\ov{\varphi}_*$ to $\ide$, is the family of maps
\begin{equation}\label{homotop}
\Bigl(\ov{\omega}_{n+1}\colon \ov{D}^{\ot n}\ot M\longrightarrow \ov{D}^{\ot {n+1}}\ot M\Bigr)_{n\ge 0},
\end{equation}
induced by $\cramped{\bigl(\omega_{n+1}\colon \ov{E}^{\ot n}\ot E\longrightarrow \ov{E}^{\ot {n+1}}\ot E\bigr)_{n\ge 0}}$. By Proposition~\ref{homotopia} we also know that $\ov{\omega}_1=0$, $\ov{\varphi}_*\xcirc \ov{\omega}_* = 0$, $\ov{\omega}_{*+1}\xcirc \phi_* = 0$ and $\ov{\omega}_{*+1}\xcirc \ov{\omega}_* = 0$.

\smallskip

For each $\alpha,\beta,n\in \mathds{N}_0$ such that $\alpha+\beta=n$, we let $\ov{\varphi}_n^{\alpha\beta} \colon \ov{D}^{\ot n}\ot M \to M_{\alpha\beta}$ denote the unique map such that $\ov{\varphi}_n = \sum_{\alpha+\beta =n} \ov{\varphi}_n^{\alpha\beta}$. In Section~\ref{Section:Full linear cycle set (co)homology of cyclic cycle sets} we will use the following result with $M\coloneqq \ov{D}^{\ot s}/\sh(\ov{D}^{\ot s})$.

\begin{proposition}\label{basico para mor de comp dob} The following assertions hold:

\begin{enumerate}

\item For each $\alpha,\beta\ge 0$, there exists $x_{\alpha\beta}\in \ov{D}^{\ot {\alpha+\beta}}$ such that $\ov{\phi}_{\alpha+\beta}(m) = x_{\alpha\beta} \ot m$, for all $m\in M_{\alpha\beta}$.

\item For each $\alpha,\beta\ge 0$, there exists a map $\breve{\varphi}_{\alpha+\beta}^{\alpha\beta}\colon \ov{D}^{\ot {\alpha+\beta}}\to \mathds{Z}$ such that
$$
\ov{\varphi}_{\alpha+\beta}^{\alpha\beta}(g^{\imath_1}\ot\cdots\ot g^{\imath_{\alpha+\beta}}\ot m) = \breve{\varphi}_{\alpha+\beta}^{\alpha\beta}  (g^{\imath_1} \ot \cdots\ot g^{\imath_{\alpha+\beta}})m\quad\text{for all $m\in M$.}
$$

\item For each $n\ge 0$, there exists a map $\breve{\omega}_{n+1}\colon \ov{D}^{\ot n}\to \ov{D}^{\ot {n+1}}$ such that
$$
\ov{\omega}_{n+1}(g^{\imath_1}\ot\cdots\ot g^{\imath_n}\ot m) = \breve{\omega}_{n+1}(g^{\imath_1}\ot\cdots\ot g^{\imath_n})\ot m\quad\text{for all $m\in M$.}
$$
\end{enumerate}
\end{proposition}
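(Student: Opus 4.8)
The plan is to reduce all three assertions to one formal fact: the comparison maps $\phi_*$, $\varphi_*$ and the homotopy $\omega_{*+1}$ of Proposition~\ref{homotopia} are morphisms of right $E$-modules — for $\omega_{*+1}$ once one notes that the recursion~\eqref{aaa} defines it on the free generators $\ov{E}^{\ot n}\ot w_1$ and is then extended $E$-linearly — and that tensoring a right $E$-linear map over $E$ with the \emph{trivial} left $E$-module $M$ merely ``carries the $M$-slot along''. So first I would fix the identifications in play: via the isomorphism $f$ the module $M$ (with its trivial $D$-action) becomes a trivial $E$-module and $\ov{E}^{\ot n}\cong\ov{D}^{\ot n}$; the maps $\theta_{\alpha\beta}\colon M_{\alpha\beta}\to X_{\alpha\beta}\ot_D M$, $m\mapsto w_1\ot m$; and $\ov{\digamma}_n^{-1}$, which sends $(\byy\ot e)\ot_D m$ to $\pi_E(e)\,(\byy\ot m)$. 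Under these, $\ov{\phi}_*=\phi_*\ot_D\ide_M$, $\ov{\varphi}_*=\varphi_*\ot_D\ide_M$ and $\ov{\omega}_{*+1}=\omega_{*+1}\ot_D\ide_M$, and if $\byy\in\ov{E}^{\ot n}$ corresponds to $g^{\imath_1}\ot\cdots\ot g^{\imath_n}\in\ov{D}^{\ot n}$, then $g^{\imath_1}\ot\cdots\ot g^{\imath_n}\ot m$ corresponds to $(\byy\ot w_1)\ot_D m$.

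The only extra ingredient, needed for items~(1) and~(3), is that $\phi_n(w_1)$ lies in $\ov{E}^{\ot n}\ot\mathds{Z}w_1$ and $\omega_{n+1}(\byy\ot w_1)$ lies in $\ov{E}^{\ot{n+1}}\ot\mathds{Z}w_1$ — that is, their last tensor factor is $w_1=1$. This is immediate: by their defining formulas they are values of $\ov{\xi}_{n+1}$ (respectively of $\ov{\xi}_n$), and the image of every $\ov{\xi}_k$ is contained in $\ov{E}^{\ot k}\ot\mathds{Z}w_1$ (the cases $n=0$, where $\phi_0=\ide_E$ and $\omega_1=0$, are trivial, and Remarks~\ref{calculo de morfismos} and~\ref{calculo de la homotopia} list these low-degree values). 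Hence $\phi_{\alpha+\beta}(w_1)=\bx_{\alpha\beta}\ot w_1$ for a unique $\bx_{\alpha\beta}\in\ov{E}^{\ot{\alpha+\beta}}$, and $\omega_{n+1}(\byy\ot w_1)=\bz_n(\byy)\ot w_1$ for a unique $\bz_n(\byy)\in\ov{E}^{\ot{n+1}}$ depending $\mathds{Z}$-linearly on $\byy$.

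Items~(1) and~(3) then come out by applying the identifications above: $\ov{\phi}_{\alpha+\beta}(m)=(\phi_{\alpha+\beta}\ot_D\ide_M)(w_1\ot_D m)=(\bx_{\alpha\beta}\ot w_1)\ot_D m$, which (using $\pi_E(w_1)=1$ and then $f$) equals $x_{\alpha\beta}\ot m$ with $x_{\alpha\beta}\in\ov{D}^{\ot{\alpha+\beta}}$ the image of $\bx_{\alpha\beta}$; and $\ov{\omega}_{n+1}(g^{\imath_1}\ot\cdots\ot g^{\imath_n}\ot m)=(\omega_{n+1}\ot_D\ide_M)\bigl((\byy\ot w_1)\ot_D m\bigr)=(\bz_n(\byy)\ot w_1)\ot_D m=\breve{\omega}_{n+1}(g^{\imath_1}\ot\cdots\ot g^{\imath_n})\ot m$ once $\breve{\omega}_{n+1}$ is defined on generators as the image of $\bz_n(\byy)$ and extended $\mathds{Z}$-linearly. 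For item~(2), the component $\varphi_n^{\alpha\beta}\colon\ov{E}^{\ot n}\ot E\to X_{\alpha\beta}=E$ of $\varphi_n$ takes values in $E$, so — since $E$ acts trivially on $M$ —
$$
\varphi_n^{\alpha\beta}(\byy\ot w_1)\ot_D m=\pi_E\bigl(\varphi_n^{\alpha\beta}(\byy\ot w_1)\bigr)\,(w_1\ot_D m)\quad\text{in}\quad X_{\alpha\beta}\ot_D M;
$$
applying $\theta_{\alpha\beta}^{-1}$ gives $\ov{\varphi}_{\alpha+\beta}^{\alpha\beta}(g^{\imath_1}\ot\cdots\ot g^{\imath_{\alpha+\beta}}\ot m)=\pi_E\bigl(\varphi_{\alpha+\beta}^{\alpha\beta}(\byy\ot w_1)\bigr)\,m$, and I would set $\breve{\varphi}_{\alpha+\beta}^{\alpha\beta}(g^{\imath_1}\ot\cdots\ot g^{\imath_{\alpha+\beta}})\coloneqq\pi_E\bigl(\varphi_{\alpha+\beta}^{\alpha\beta}(\byy\ot w_1)\bigr)$, extended $\mathds{Z}$-linearly (legitimate since $\byy\mapsto\varphi_{\alpha+\beta}^{\alpha\beta}(\byy\ot w_1)$ is $\mathds{Z}$-linear); the formulas for $\ov{\varphi}_1^{01}$ and $\ov{\varphi}_1^{10}$ in Remark~\ref{calculo de morfismos} agree with this.

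I do not expect a genuine obstacle: the statement is essentially formal. The one point requiring care is threading together the chain of canonical identifications — $f$ between $E$ and $D$, $\theta_{\alpha\beta}$ between $M_{\alpha\beta}$ and $X_{\alpha\beta}\ot_D M$, and $\ov{\digamma}_n$ between $\ov{D}^{\ot n}\ot M$ and $(\ov{D}^{\ot n}\ot D)\ot_D M$ — so that the triviality of the $E$-action on $M$ is invoked in exactly the right place; once that bookkeeping is set up, each claim reads off directly from the formulas of Proposition~\ref{homotopia} and Remarks~\ref{calculo de morfismos} and~\ref{calculo de la homotopia}.
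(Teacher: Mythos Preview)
Your proposal is correct and is essentially a detailed unpacking of the paper's one-line proof, which simply says ``All the assertions follow from the fact that the left and right actions of $D$ on $M$ are trivial.'' You make explicit the chain of identifications ($f$, $\theta_{\alpha\beta}$, $\ov{\digamma}_n$) and how triviality is used at each step, whereas the paper leaves this to the reader. One small remark: once you have noted that $\ov{\digamma}_n^{-1}$ sends $(\byy\ot e)\ot_D m$ to $\pi_E(e)\,(\byy\ot m)$, the ``extra ingredient'' that $\phi_n(w_1)$ and $\omega_{n+1}(\byy\ot w_1)$ end in $w_1$ is not actually needed for the bare existence claims in (1) and (3)---applying $\pi_E$ to the last $E$-factor already produces a well-defined $x_{\alpha\beta}$ and $\breve{\omega}_{n+1}$---though your observation does yield the cleaner explicit description that matches Remarks~\ref{calculo de morfismos en hom} and~\ref{calculo de la homotopia en hom}.
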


\begin{proof} All the assertions follow from the fact that the left and right actions of $D$ on $M$ are trivial.
\end{proof}

\begin{remark}\label{calculo de morfismos en hom} By Remark~\ref{calculo de morfismos}, we have
\begin{alignat*}{3}
\begin{aligned}
& \ov{\phi}_1(m) = g\ot m &&\!\!\!\text{on $M_{01}$,}\\[8pt] &\ov{\phi}_1(m) = -g^t\ot m &&\!\!\!\text{on $M_{10}$},
\end{aligned}
\qquad \begin{aligned}
& \ov{\phi}_2 (m) = -\sum_{l=1}^{t-1} g\ot g^l\ot m &&\!\!\!\text{on $M_{02}$,}\\ & \ov{\phi}_2 (m) = g\ot g^t\ot m - g^t\ot g\ot m &&\!\!\!\text{on $M_{11}$,}\\ &\ov{\phi}_2 (m) = - \sum_{l=1}^{u-1} g^t\ot g^{tl}\ot m &&\!\!\!\text{on $M_{20}$,}
\end{aligned}\qquad
\begin{aligned}
& \ov{\varphi}_1^{01}(g^{t\imath+\jmath}\ot m) = \jmath m,\\[8pt] &\ov{\varphi}_1^{10}(g^{t\imath+\jmath}\ot m) = - \imath m,
\end{aligned}
\end{alignat*}
where $0\le\imath< u$ and $0\le\jmath< t$.
\end{remark}

\begin{remark}\label{calculo de la homotopia en hom} By Remark~\ref{calculo de la homotopia}, we have
$$
\ov{\omega}_2(g^{t\imath+\jmath}\ot m) = \sum_{l=0}^{\imath-1} g^t\ot g^{tl+\jmath}\ot m + \sum_{l=1}^{\jmath-1} g \ot g^l\ot m,
$$
where $0\le\imath< u$ and $0\le\jmath< t$.
\end{remark}

\section{Full linear cycle set cohomology of cyclic cycle sets}\label{Section:Full linear cycle set (co)homology of cyclic cycle sets}

Let $p\in \mathds{N}$ be a prime number and let $\nu,\eta\in \mathds{N}$ be such that $0<\nu\le \eta\le 2\nu$. Let $v\coloneqq p^{\eta}$, $u\coloneqq p^{\nu}$, $t\coloneqq p^{\eta-\nu}$ and $u'\coloneqq p^{2\nu-\eta}$. Note that $u't = u$ and $ut=v$. Consider the linear cycle set $\mathcal{A}\coloneqq (\mathds{Z}/v\mathds{Z};\cdot)$, where $\imath\cdot \jmath\coloneqq (1-u\imath)\jmath$. Note that the set of invariants of $\mathcal{A}$ is formed by the multiples of $t$ and that it has $u$ elements. In this section we compute the cohomologies $\Ho_N^1(\mathcal{A},\Gamma)$ and $\Ho_N^2(\mathcal{A},\Gamma)$ of $\mathcal{A}$ with coefficients in an arbitrary abelian group $\Gamma$. Then, using the last result we prove Theorems~A, B and~C of the introduction. Let $C_v\coloneqq \langle g\rangle$ be the multiplicative cyclic group of order $v$, endowed with the binary operation $g^{\imath}\cdot g^{\jmath}\coloneqq g^{\imath\cdot \jmath}$. Let $D\coloneqq \mathds{Z}[C_v]$ and $\ov{D}\coloneqq D/\mathds{Z}1$. Let $\sh(\ov{D}^{\ot s})$ be as in subsection~\ref{The full linear cycle set (co)homology}. For each $r\ge 0$ and $s\ge 1$, let $\ov{M}(s)\coloneqq \ov{D}^{\ot s}/\sh(\ov{D}^{\ot s})$ and let $\ov{X}_{rs} \coloneqq \ov{X}_r(\ov{M}(s))$ where $\ov{X}_r(\ov{M}(s))$ is as in Subsec\-tion~\ref{subsection:A complex for the homology of cyclic groups}. Thus $\ov{X}_{rs} = \bigoplus_{\alpha+\beta = r} \ov{M}(s)_{\alpha \beta}$, where $\alpha,\beta\ge 0$ and each $\ov{M}(s)_{\alpha \beta}$ is a copy of $\ov{M}(s)$. Consider the~dou\-ble complex $(\ov{X}_{**},\ov{d}^{\mathrm{h}}_{**},\ov{d}^{\mathrm{v}}_{**})$, where the $s$-row $(\ov{X}_{*s},\ov{d}^{\mathrm{h}}_{*s})$ is the complex $(\ov{X}_*(\ov{M}(s)),\ov{d}_*)$, introduced in~\eqref{a1}, and $\ov{d}^{\mathrm{v}}_{rs}\coloneqq \bigoplus_{\alpha+\beta = r} \ov{d}^{\mathrm{v}}_{\alpha\beta s}$, in which $\ov{d}^{\mathrm{v}}_{\alpha\beta s}\colon \ov{M}(s)_{\alpha \beta} \to \ov{M}(s-1)_{\alpha \beta}$ is the map defined by
\begin{equation}\label{dvbarra}
\begin{aligned}
\ov{d}^{\mathrm{v}}_{\alpha\beta s}([g^{\imath_1}\ot\cdots\ot g^{\imath_s}]) & \coloneqq (-1)^{r+1} [g^{\imath_2}\ot\cdots\ot g^{\imath_s}]\\
&+\sum_{\jmath=1}^{s-1} (-1)^{\jmath+r+1} [g^{\imath_1}\ot\cdots\ot g^{\imath_{j-1}}\ot g^{\imath_{j}+\imath_{j+1}}\ot g^{\imath_{j+2}}\ot\cdots\ot g^{\imath_{s}}]\\
&+ (-1)^{r+s+1} [g^{\imath_1}\ot\cdots\ot g^{\imath_{s-1}}],
\end{aligned}
\end{equation}
where $[g^{\imath_1}\ot\cdots\ot g^{\imath_s}]$, etcetera, are as in Subsection~\ref{The full linear cycle set (co)homology}. Let $\mathcal{A}_{tr}$ be the group $\mathds{Z}/v\mathds{Z}$, endowed with the trivial structure of linear cycle set. For each $s\ge 1$, let
\begin{equation*}
\ov{\phi}_{*s} \colon (\ov{X}_{*s},\ov{d}^{\mathrm{h}}_{*s})\longrightarrow \bigl(\wh{C}^N_{*s}(\mathcal{A}_{tr}),\partial^{\mathrm{h}}_{*s}\bigr) \qquad\text{and}\qquad \ov{\varphi}_{*s}\colon \bigl(\wh{C}^N_{*s}(\mathcal{A}_{tr}),\partial^{\mathrm{h}}_{*s}\bigr)\longrightarrow (\ov{X}_{*s},\ov{d}^{\mathrm{h}}_{*s})
\end{equation*}
be the maps $\ov{\phi}_*$ and $\ov{\varphi}_*$ introduced in~\eqref{mapas basicos}, with $M$ replaced by $\ov{M}(s)$. By items~(1) and~(2) of Proposition~\ref{basico para mor de comp dob}, in the diagram
\begin{equation}\label{(add1)}
\begin{tikzpicture}[baseline=(current bounding box.center)]
\draw (-0.5,0) node[] {$(\ov{X}_{**},\ov{d}^{\mathrm{h}}_{**},\ov{d}^{\mathrm{v}}_{**})$}; \draw (3.5,0) node {$\bigl(\wh{C}^N_{*s}(\mathcal{A}_{tr}),\partial^{\mathrm{h}}_{**},\partial^{\mathrm{v}}_{**}\bigr)$}; \draw[<-] (0.62,0.125) -- node[above=-2pt,font=\scriptsize] {$\ov{\varphi}_{**}$}(1.9,0.12);\draw[->] (0.62,-0.15) -- node[below=-2pt,font=\scriptsize] {$\ov{\phi}_{**}$}(1.9,-0.12);
\end{tikzpicture}
\end{equation}
the maps $\ov{\phi}_{**}$ and $\ov{\varphi}_{**}$ are morphisms of double complexes. Moreover, we know that $\ov{\varphi}_{**}\xcirc \ov{\phi}_{**} = \ide$, and that in each row $s$, we have a special~de\-for\-ma\-tion retract
\begin{equation}\label{(aaaa1)}
\begin{tikzpicture}[baseline=(current bounding box.center)]
\draw (-0.2,0) node[] {$(\ov{X}_{*s},\ov{d}^{\mathrm{h}}_{*s})$}; \draw (3.1,0) node {$\bigl(\wh{C}^N_{*s}(\mathcal{A}_{tr}),\partial^{\mathrm{h}}_{*s})$};\draw[<-] (0.6,0.12) -- node[above=-2pt,font=\scriptsize] {$\ov{\varphi}_{*s}$}(1.9,0.12);\draw[->] (0.6,-0.12) -- node[below=-2pt,font=\scriptsize] {$\ov{\phi}_{*s}$}(1.9,-0.12); \draw (5.7,0) node {$\wh{C}^N_{*s}(\mathcal{A}_{tr})$}; \draw (9.1,0) node {$\wh{C}^N_{*+1,s}(\mathcal{A}_{tr})$,}; \draw[->] (6.5,0) -- node[above=-2pt,font=\scriptsize] {$\ov{\omega}_{*+1,s}$}(8,0);
\end{tikzpicture}
\end{equation}
where $(\ov{\omega}_{n+1,s})_{n\ge 0}$ is the family of maps $(\ov{\omega}_{n+1})_{n\ge 0}$, introduce in~\eqref{homotop}, with $M$ replaced by $\ov{M}(s)$. For each $s\in \mathds{N}$, we have a perturbation $\delta^h_{*s}\colon \ov{D}^{\ot *}\ot \ov{M}(s)\to \ov{D}^{\ot {*-1}}\ot \ov{M}(s)$, where
\begin{equation}\label{(bb'')}
\begin{aligned}
& \delta^h_{11}(g^{\imath_1}\ot g^{\imath_2}) = g^{\imath_1}\cdot g^{\imath_2} - g^{\imath_2} = g^{(1-u\imath_1)\imath_2} - g^{\imath_2},\\
& \delta^h_{21}(g^{\imath_1}\ot g^{\imath_2}\ot g^{\imath_3})\coloneqq g^{\imath_1}\cdot g^{\imath_2}\ot g^{\imath_1}\cdot g^{\imath_3} - g^{\imath_2} \ot g^{\imath_3} = g^{(1-u\imath_1)\imath_2}\ot g^{(1-u\imath_1)\imath_3} - g^{\imath_2} \ot g^{\imath_3},\\
& \delta^h_{12}(g^{\imath_1}\ot [g^{\imath_2}\ot g^{\imath_3}])\coloneqq [g^{\imath_1}\cdot g^{\imath_2}\ot g^{\imath_1}\cdot g^{\imath_3}] - [g^{\imath_2} \ot g^{\imath_3}] = [g^{(1-u\imath_1)\imath_2}\ot g^{(1-u\imath_1)\imath_3}] - [g^{\imath_2} \ot g^{\imath_3}],\\
& \delta^h_{ns} = 0 \quad\text{for $ns\notin\{11,21,12\}$}.
\end{aligned}
\end{equation}
In order to carry out our computations we are going to apply Proposition~\ref{para complejos dobles} to this data. For this, we first must prove that $\delta^t_*$ is small. Since $\ov{\omega}_{11} = \ov{\omega}_{12} = 0$, the unique non-trivial point is that $\delta^h_{21}\xcirc \ov{\omega}_{21}$ is nilpotent. But, by Remark~\ref{calculo de la homotopia en hom} and the fact that $g^t\cdot g^l=g^l$ and $g\cdot g^l=g^{(1-u)l}=g^{t(u-u'l)+l}$, we have
\begin{equation}\label{formula'}
\delta^h_{21}\xcirc \ov{\omega}_{21} (g^{t\imath+\jmath}\ot g^{\imath_1}) = \sum_{l=1}^{\jmath-1} g\cdot g^l\ot g\cdot g^{\imath_1} - \sum_{l=1}^{\jmath-1} g^l\ot g^{\imath_1} = \sum_{l=1}^{\jmath-1} g^{t(u-u'l)+l}\ot  g^{(1-u)\imath_1} - \sum_{l=1}^{\jmath-1}  g^l \ot g^{\imath_1},
\end{equation}
where $0\le \jmath<t$. Using this it is easy to see that $(\delta^h_{21}\xcirc \ov{\omega}_{21})^{t-1} = 0$.

\begin{remark}\label{util para'} The chain double complex $\bigl(\wh{C}^N_{**}(\mathcal{A}_{tr},\mathds{Z}),\wh{\partial}^{\mathrm{h}}_{**}, \partial^{\mathrm{v}}_{**}\bigr)$, obtained by applying the perturbation~\eqref{(bb'')} to $\bigl(\wh{C}^N_{**}(\mathcal{A}_{tr},\mathds{Z}),\partial^{\mathrm{h}}_{**}, \partial^{\mathrm{v}}_{**}\bigr)$, only coincides with $\bigl(\wh{C}^N_{**}(\mathcal{A},\mathds{Z}),\partial^{\mathrm{h}}_{**},\partial^{\mathrm{v}}_{**}\bigr)$ for $** = 01$, $** = 11$, $** = 21$, $** = 02$, $** = 12$ and $** = 03$. Thus, the chain double complex $\mathcal{X}(\mathcal{A})\coloneqq (\ov{X}_{**},\wh{d}^{\mathrm{h}}_{**}, \wh{d}^{\mathrm{v}}_{**})$, obtained by applying Proposition~\ref{para complejos dobles} to the above data, will be useful only to compute the full (co)homology of $\mathcal{A}$ in degrees $1$ and $2$. Note that $\wh{d}^{\mathrm{v}}_{**}=\ov{d}^{\mathrm{v}}_{**}$.
\end{remark}

For each $\alpha,\beta,r\in \mathds{N}_0$ and $s\in \mathds{N}$ such that $\alpha+\beta=r$, we let $\ov{\varphi}_{rs}^{\alpha\beta} \colon \ov{D}^{\ot r}\ot \ov{M}(s) \to \ov{M}(s)_{\alpha\beta}$ denote the unique map such that $\ov{\varphi}_{rs} = \sum_{\alpha+\beta =r} \ov{\varphi}_{rs}^{\alpha\beta}$. Clearly $\ov{\varphi}_{rs}^{\alpha\beta}$ is the map $\ov{\varphi}_r^{\alpha\beta}$ introduced above Proposition~\ref{basico para mor de comp dob}, with $M$ replaced by $\ov{M}(s)$. A direct computation using equality~\eqref{formula'} and Remark~\ref{calculo de morfismos en hom} shows that
\begin{align*}
& \ov{\varphi}^{01}_{11}\xcirc\delta^h_{21}\xcirc \ov{\omega}_{21}(g^{t\imath+\jmath}\ot g^{\imath_1}) = \sum_{l=1}^{j-1} l (g^{\imath_1}g^{-u\imath_1} - g^{\imath_1}) = \binom{\jmath}{2} g^{\imath_1} (g^{-u\imath_1} - 1)
\shortintertext{and}
& \ov{\varphi}^{10}_{11}\xcirc\delta^h_{21}\xcirc \ov{\omega}_{21}(g^{t\imath+\jmath}\ot g^{\imath_1})
= -\sum_{l=1}^{\jmath-1} (u-u'l) g^{\imath_1} g^{-u\imath_1} = u'\left(\binom{j}{2}-\binom{j-1}{1}t\right) g^{\imath_1} g^{-u\imath_1},
\end{align*}
where $0\le\jmath<t$. An inductive argument using these equalities,~\eqref{formula'} and the fact that $g^{u^2} = 1$ shows that,
\begin{align}
&\ov{\varphi}_{11}^{01}\xcirc(\delta^h_{21}\xcirc \ov{\omega}_{21})^s(g^{t\imath+\jmath}\ot g^{\imath_1}) = \binom{\jmath}{s+1} g^{\imath_1}\bigl(g^{-u\imath_1}-1\bigr)^s\label{prim comp'}
\shortintertext{and}
& \ov{\varphi}_{11}^{10}\xcirc(\delta^h_{21}\xcirc \ov{\omega}_{21})^s(g^{t\imath+\jmath}\ot g^{\imath_1}) = u'\left(\binom{\jmath}{s+1}-\binom{\jmath-1}{s}t\right) g^{\imath_1} g^{-u\imath_1}(g^{-u\imath_1}-1)^{s-1},\label{sec comp'}
\end{align}
for all $0\le\jmath<t$ and $s\in \mathds{N}$. Let $\mathcal{X}(\mathcal{A})_T$ be the subcomplex
$$
\mathcal{X}(\mathcal{A})_T\coloneqq \quad
\begin{tikzcd}[
column sep={4em},
row sep={3.5em},
]
\ov{X}_{03} \arrow[d, "\wh{d}^{\mathrm{v}}_{03}"] \\
\ov{X}_{02} \arrow[d, "\wh{d}^{\mathrm{v}}_{02}"]  & \ov{X}_{12}\arrow[d, "\wh{d}^{\mathrm{v}}_{12}"] \arrow[l,"\wh{d}^{\mathrm{h}}_{12}"',]\\
\ov{X}_{01} & \ov{X}_{11} \arrow[l,"\wh{d}^{\mathrm{h}}_{11}"']& \ov{X}_{21}, \arrow[l,"\wh{d}^{\mathrm{h}}_{21}"']
\end{tikzcd}
$$
of $\mathcal{X}(\mathcal{A})$. Recall that $\ov{X}_{rs} = \bigoplus_{\substack{\alpha,\beta\ge 0  \\\alpha+\beta = r}}\ov{M}(s)_{\alpha\beta}$.

\begin{theorem}\label{complejo para}
The chain double complex $\mathcal{X}(\mathcal{A})_T$ is a partial total complex of the diagram
$$
\mathcal{D}(\mathcal{A})_T\coloneqq \quad
\begin{tikzpicture}[baseline=(a).base]
\node[scale=0.8](a)at(0,0){
\begin{tikzcd}[row sep=5em, column sep=1.5em] 	
&&\ov{M}(1)_{02} \arrow[ddl,"\wh{d}^{\mathrm{h}1}_{021}"' pos=0.35, outer sep=-1pt] \arrow[ddddr, end anchor={[xshift=-1ex]},,"\wh{d}^{\mathrm{h}2}_{021}" pos=0.39, outer sep=-1pt] \\
&\ov{M}(2)_{01} \arrow[ddl,"\wh{d}^{\mathrm{h}1}_{012}"' pos=0.35, outer sep=-1pt] \arrow[d,"\wh{d}^{\mathrm{v}}_{012}" pos=0.20, outer sep=-1pt] \\
\ov{M}(3)_{00} \arrow[d,"\wh{d}^{\mathrm{v}}_{003}" pos=0.20, outer sep=-1pt] &\ov{M}(1)_{01} \arrow[ddl,"\wh{d}^{\mathrm{h}1}_{011}"' pos=0.35, outer sep=-1pt]  &&& \ov{M}(1)_{11} \arrow[ddl,end anchor={[xshift=0.5ex]},"\wh{d}^{\mathrm{h}1}_{111}"' pos=0.35, outer sep=-1pt] \arrow[lll, crossing over,"0"' pos=0.65, outer sep=-1pt] \\
\ov{M}(2)_{00}\arrow[d,"\wh{d}^{\mathrm{v}}_{002}" pos=0.20, outer sep=-1pt] &&& \ov{M}(2)_{10} \arrow[lll, crossing over,"0"', outer sep=-1pt] \arrow[d,"\wh{d}^{\mathrm{v}}_{102}" pos=0.20, outer sep=-1pt]\\
\ov{M}(1)_{00} &&& \ov{M}(1)_{10} \arrow[lll,"0"', outer sep=-1pt] &&& \ov{M}(1)_{20}, \arrow[lll, "\wh{d}^{\mathrm{h}0}_{201}"', outer sep=-1pt]
\end{tikzcd}
};
\end{tikzpicture}
$$
where $\wh{d}^{\mathrm{v}}_{***} = \ov{d}^{\mathrm{v}}_{***}$ (see formula~\eqref{dvbarra}), and the other not zero maps are given by:
\begin{alignat*}{2}
& \wh{d}^{\mathrm{h}0}_{201}(g^{\imath})\coloneqq ug^{\imath}, &&\qquad \wh{d}^{\mathrm{h}1}_{011}(g^{\imath})\coloneqq g^{\imath}(g^{-u\imath} - 1),\\
&\wh{d}^{\mathrm{h}1}_{111}(g^{\imath})\coloneqq g^{\imath}(1-g^{-u\imath}), &&\qquad  \wh{d}^{\mathrm{h}1}_{021}(g^{\imath})\coloneqq - g^{\imath} \sum_{s=0}^{t-1} g^{-su\imath},\\
& \wh{d}^{\mathrm{h}1}_{012}([g^{\imath_1}\ot g^{\imath_2}]) \coloneqq [g^{(1-u)\imath_1}\ot g^{(1-u)\imath_2}]-[g^{\imath_1}\ot g^{\imath_2}], &&\qquad \wh{d}^{\mathrm{h}2}_{021}(g^{\imath})\coloneqq -g^{\imath} + u' g^{\imath} \left(\sum_{s=1}^{t-1} s g^{-u\imath s} \right).
\end{alignat*}

\end{theorem}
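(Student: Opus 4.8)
The plan is to unravel Proposition~\ref{para complejos dobles}. Applying it to the data assembled above --- whose smallness was established through the nilpotency of $\delta^{\mathrm{h}}_{21}\xcirc\ov{\omega}_{21}$ exhibited in~\eqref{formula'} --- the perturbed double complex $\mathcal{X}(\mathcal{A})=(\ov{X}_{**},\wh{d}^{\mathrm{h}}_{**},\wh{d}^{\mathrm{v}}_{**})$ satisfies $\wh{d}^{\mathrm{v}}_{**}=\ov{d}^{\mathrm{v}}_{**}$ (Remark~\ref{util para'}) and $\wh{d}^{\mathrm{h}}_{rs}=\ov{d}^{\mathrm{h}}_{rs}+\ov{\varphi}_{r-1,s}\xcirc A_{rs}\xcirc\ov{\phi}_{rs}$ with $A_{rs}=\sum_{k\ge 0}(\delta^{\mathrm{h}}_{rs}\xcirc\ov{\omega}_{rs})^k\xcirc\delta^{\mathrm{h}}_{rs}$. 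Since $\ov{X}_{rs}=\bigoplus_{\alpha+\beta=r}\ov{M}(s)_{\alpha\beta}$, since $\wh{d}^{\mathrm{v}}_{rs}$ preserves the $(\alpha,\beta)$-grading, since the pieces $\ov{d}^l_{\alpha\beta}$ of $\ov{d}^{\mathrm{h}}_{rs}$ (see~\eqref{cuentapal}) map $\ov{M}(s)_{\alpha\beta}$ into $\ov{M}(s)_{\alpha+l-1,\beta-l}$, and since $\ov{\varphi}_{r-1,s}=\sum_{\alpha'}\ov{\varphi}^{\alpha'\beta'}_{r-1,s}$, the differential $\wh{d}^{\mathrm{h}}_{rs}$ decomposes into components $\wh{d}^{\mathrm{h}l}_{\alpha\beta s}\colon\ov{M}(s)_{\alpha\beta}\to\ov{M}(s)_{\alpha+l-1,\beta-l}$. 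Displaying these components, together with the $\wh{d}^{\mathrm{v}}_{\alpha\beta s}$, on the summands of the six spots of $\mathcal{X}(\mathcal{A})_T$ produces the array $\mathcal{D}(\mathcal{A})_T$, of which $\mathcal{X}(\mathcal{A})_T$ is the partial totalization obtained by collapsing the $\alpha$-grading to $r=\alpha+\beta$. So the theorem reduces to identifying the components that appear in $\mathcal{D}(\mathcal{A})_T$.

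I would first dispose of the easy ones. The vertical maps are $\wh{d}^{\mathrm{v}}_{\alpha\beta s}=\ov{d}^{\mathrm{v}}_{\alpha\beta s}$, i.e.\ formula~\eqref{dvbarra}. For the horizontal ones, $\delta^{\mathrm{h}}_{rs}=0$ unless $rs\in\{11,21,12\}$ by~\eqref{(bb'')}, so off these three source bidegrees $\wh{d}^{\mathrm{h}}_{rs}=\ov{d}^{\mathrm{h}}_{rs}$, read from~\eqref{pepito}; this already gives $\wh{d}^{\mathrm{h}0}_{201}(g^{\imath})=ug^{\imath}$. For $rs\in\{11,12\}$ one has $\ov{\omega}_{rs}=0$ and $\ov{\varphi}_{0s}=\ide$ (the target $\ov{X}_{0s}$ being the single summand $\ov{M}(s)_{00}$), so the correction collapses to $\delta^{\mathrm{h}}_{rs}\xcirc\ov{\phi}_{rs}$; evaluating it on the summands of $\ov{X}_{11}$ and $\ov{X}_{12}$ via Remark~\ref{calculo de morfismos en hom} for $\ov{\phi}_1$, formula~\eqref{(bb'')}, and the relations $ut=v$, $g^v=1$, yields $\wh{d}^{\mathrm{h}1}_{011}$, $\wh{d}^{\mathrm{h}1}_{012}$, and $0$ on the two $\ov{M}(\cdot)_{10}$ summands; the same relations show $\delta^{\mathrm{h}}_{21}\xcirc\ov{\phi}_{21}$ vanishes on the $\ov{M}(1)_{20}$ summand, so nothing is added there.

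The main work is the map out of $\ov{X}_{21}$ on the summands $\ov{M}(1)_{11}$ and $\ov{M}(1)_{02}$. Using Remark~\ref{calculo de morfismos en hom}, formula~\eqref{(bb'')} and $g^v=1$, one computes $\delta^{\mathrm{h}}_{21}\xcirc\ov{\phi}_{21}$ to be $g^t\ot(g^{(1-u)\imath}-g^{\imath})$ on $\ov{M}(1)_{11}$, and $-\sum_{l=1}^{t-1}g^{(1-u)l}\ot g^{(1-u)\imath}+\sum_{l=1}^{t-1}g^l\ot g^{\imath}$ on $\ov{M}(1)_{02}$. On the first of these, $\delta^{\mathrm{h}}_{21}\xcirc\ov{\omega}_{21}$ kills $g^t\ot(-)$ by~\eqref{formula'}, so $A_{21}$ reduces to $\delta^{\mathrm{h}}_{21}$, and applying $\ov{\varphi}^{01}_{11}$, $\ov{\varphi}^{10}_{11}$ from Remark~\ref{calculo de morfismos en hom} gives $0$ and $g^{\imath}(1-g^{-u\imath})$, which is $\wh{d}^{\mathrm{h}1}_{111}$ together with the vanishing of the $\ov{M}(1)_{11}\to\ov{M}(1)_{01}$ arrow. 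On $\ov{M}(1)_{02}$ one must instead evaluate $\ov{\varphi}_{11}\xcirc\sum_{k\ge 0}(\delta^{\mathrm{h}}_{21}\xcirc\ov{\omega}_{21})^k$ on the element just computed; this is where the closed expressions~\eqref{prim comp'} and~\eqref{sec comp'} (valid also for $k=0$, by Remark~\ref{calculo de morfismos en hom}) are used, once each $g^{(1-u)l}$ has been put in normal form: from $g^v=1$, $u=u't$ and, crucially, $g^{u^2}=1$ --- which holds precisely because $\eta\le 2\nu$ --- one gets $g^{(1-u)l}=g^{tu'(t-l)+l}$, so the relevant ``$\jmath$'' is $l$ and the relevant root of unity is $g^{-u\imath}$. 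After summing the binomial series through $\sum_{s\ge 0}\binom{l}{s+1}x^s=\sum_{j=0}^{l-1}(1+x)^j$ with $x=g^{-u\imath}-1$ (and its refinement absorbing the $-t\binom{l-1}{s}$ term needed for $\ov{\varphi}^{10}_{11}$), collapsing the resulting double sums over $1\le l<t$, and adding back $\ov{d}^1_{02}(g^{\imath})=-tg^{\imath}$ and $\ov{d}^2_{02}(g^{\imath})=-g^{\imath}$ from~\eqref{pepito}, one is left with exactly $\wh{d}^{\mathrm{h}1}_{021}(g^{\imath})=-g^{\imath}\sum_{s=0}^{t-1}g^{-su\imath}$ and $\wh{d}^{\mathrm{h}2}_{021}(g^{\imath})=-g^{\imath}+u'g^{\imath}\sum_{s=1}^{t-1}sg^{-u\imath s}$. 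The hard part is precisely this last computation: the three nested summations --- over the iteration index $k$, over $l$, and over the binomial index --- together with the exponent arithmetic modulo $v$ have to be handled carefully; everything else is a direct substitution.
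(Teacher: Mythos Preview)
Your proposal is correct and follows essentially the same route as the paper: apply Proposition~\ref{para complejos dobles}, dispose of the summands where the perturbation correction is trivial (using $\ov{\omega}_{11}=\ov{\omega}_{12}=0$, $\ov{\varphi}_{0s}=\ide$, and the vanishing of $\delta^{\mathrm{h}}_{21}\xcirc\ov{\phi}_{21}$ on $\ov{M}(1)_{20}$), and then carry out the main computation on $\ov{M}(1)_{11}$ and $\ov{M}(1)_{02}$ via~\eqref{prim comp'},~\eqref{sec comp'} and the binomial identities, exactly as the paper does. One small expositional slip: the sentence ``off these three source bidegrees $\wh{d}^{\mathrm{h}}_{rs}=\ov{d}^{\mathrm{h}}_{rs}$ \dots\ this already gives $\wh{d}^{\mathrm{h}0}_{201}(g^{\imath})=ug^{\imath}$'' is misplaced, since $rs=21$ \emph{is} one of the three bidegrees where $\delta^{\mathrm{h}}$ is nonzero; the correct reason (which you supply a few lines later) is that $\delta^{\mathrm{h}}_{21}\xcirc\ov{\phi}_{21}$ vanishes on the $\ov{M}(1)_{20}$ summand.
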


\begin{proof} In order to prove this theorem we will apply Proposition~\ref{para complejos dobles} to the data consisting of diagrams~\eqref{(add1)} and~\eqref{(aaaa1)}. We begin by computing the first row of $\mathcal{X}(\mathcal{A})_T$. Since $\ov{\omega}_{11} = 0$ and $\ov{\varphi}_{01} = \ide_{\ov{M}(1)}$ we know that $\wh{d}^{\mathrm{h}}_{11} = \ov{d}^{\mathrm{h}}_{11}+\delta^h_{11}\xcirc \ov{\phi}_{11}$. Moreover, by Remark~\ref{calculo de morfismos en hom} and the definition of $\delta^h_{11}$, we have
$$
\delta^h_{11}\xcirc \ov{\phi}_{11}(g^{\imath}) = 0\text{ on $\ov{M}(1)_{10}$}\quad\text{and}\quad \delta^h_{11}\xcirc \ov{\phi}_{11}(g^{\imath}) = \delta^h_{11}(g\ot g^{\imath}) = g^{(1-u)\imath}- g^{\imath}\text{ on $\ov{M}(1)_{01}$.}
$$
Since $\ov{d}^{\mathrm{h}}_{11} = 0$ (by equalities~\eqref{pepito} and~\eqref{cuentapal}), this implies that
$$
\wh{d}^{\mathrm{h}}_{11}(g^{\imath}) = 0 \text{ on $\ov{M}(1)_{10}$}\qquad\text{and}\qquad \wh{d}^{\mathrm{h}}_{11}(g^{\imath}) = g^{\imath}(g^{-u\imath}- 1) = \wh{d}^{\mathrm{h}1}_{011}(g^{\imath}) \text{ on $\ov{M}(1)_{01}$.}
$$
We next compute $\wh{d}^{\mathrm{h}}_{21}$. By Remark~\ref{calculo de morfismos en hom} and the definition of $\delta^h_{21}$, we have
\begin{align}
& \delta^h_{21}\xcirc \ov{\phi}_{21}(g^{\imath}) = 0 &&\text{ on $\ov{M}(1)_{20}$}\label{eqq1},\\
& \delta^h_{21}\xcirc \ov{\phi}_{21}(g^{\imath}) = g^{(1-u)t}\ot g^{(1-u)\imath} - g^t\ot g^{\imath} = g^t\ot g^{\imath}\bigl(g^{-u\imath} - 1\bigr)  &&\text{ on $\ov{M}(1)_{11}$}\label{eqq2},\\
& \delta^h_{21}\xcirc \ov{\phi}_{21}(g^{\imath}) = \sum_{l=1}^{t-1}  g^l\ot g^{\imath} - \sum_{l=1}^{t-1} g^{t(u-u'l)+l}\ot g^{\imath}g^{-u\imath} &&\text{ on $\ov{M}(1)_{02}$.}\label{eqq3}
\end{align}
Hence, again by Remark~\ref{calculo de morfismos en hom},
\begin{align}
& \ov{\varphi}^{01}_{11}\xcirc \delta^h_{21}\xcirc \ov{\phi}_{21}(g^{\imath}) = 0 && \text{on $\ov{M}(1)_{11}$,}\label{1de4}\\
&\ov{\varphi}^{10}_{11}\xcirc \delta^h_{21}\xcirc \ov{\phi}_{21}(g^{\imath}) = g^{\imath}(1-g^{-u\imath}) && \text{on $\ov{M}(1)_{11}$,}\label{2de4}\\
& \ov{\varphi}_{11}^{01}\xcirc\delta^h_{21}\xcirc \ov{\phi}_{21}(g^{\imath}) = - \binom{t}{2}g^{\imath} \bigl(g^{-u\imath}-1\bigr) && \text{on $\ov{M}(1)_{02}$,}\label{3de4}\\
&\ov{\varphi}_{11}^{10}\xcirc\delta^h_{21}\xcirc \ov{\phi}_{21}(g^{\imath}) = u'\binom{t}{2}g^{\imath} g^{-u\imath} && \text{on $\ov{M}(1)_{02}$}\label{4de4}.
\end{align}
By equality~\eqref{eqq1} we know that $\wh{d}^{\mathrm{h}}_{21}(g^{\imath}) = \ov{d}^{\mathrm{h}}_{21}(g^{\imath})$ on $\ov{M}(1)_{20}$. Consequently, by equalities~\eqref{pepito} and~\eqref{cuentapal}, we have $\wh{d}^{\mathrm{h}}_{21}(g^{\imath}) = \wh{d}^{\mathrm{h}0}_{201}(g^{\imath})$ on $\ov{M}(1)_{20}$. Moreover,~by~equal\-ities~\eqref{prim comp'},~\eqref{sec comp'} and~\eqref{eqq2},
$$
\ov{\varphi}_{11}^{01}\xcirc\bigl(\delta^h_{21}\xcirc \ov{\omega}_{21}\bigr)^s\xcirc \delta^h_{21}\xcirc \ov{\phi}_{21}(g^{\imath}) = 0\text{ on $\ov{M}(1)_{11}$}\quad\text{and}\quad \ov{\varphi}_{11}^{10}\xcirc\bigl(\delta^h_{21}\xcirc \ov{\omega}_{21}\bigr)^s\xcirc\delta^h_{21}\xcirc \ov{\phi}_{21}(g^{\imath})= 0\text{ on $\ov{M}(1)_{11}$}\qquad\text{for all $s\ge 1$,}
$$
while, by equalities~\eqref{pepito} and~\eqref{cuentapal}, we know that $\ov{d}^{\mathrm{h}}_{21}=0$ on $\ov{M}(1)_{11}$. Hence, by equalities~\eqref{1de4} and~\eqref{2de4},
$$
\wh{d}^{\mathrm{h}}_{21}(g^{\imath}) = \ov{\varphi}^{01}_{11}\xcirc \delta^h_{21}\xcirc \ov{\phi}_{21}(g^{\imath}) + \ov{\varphi}^{10}_{11}\xcirc \delta^h_{21}\xcirc \ov{\phi}_{21}(g^{\imath})= \wh{d}^{\mathrm{h}1}_{111}(g^{\imath})\qquad\text{on $\ov{M}(1)_{11}$.}
$$
We now compute $\wh{d}^{\mathrm{h}}_{21}$ on $\ov{M}(1)_{02}$. Equalities~\eqref{prim comp'} and~\eqref{sec comp'} implies that
\begin{align*}
& \ov{\varphi}_{11}^{01}\xcirc\bigl(\delta^h_{21}\xcirc \ov{\omega}_{21}\bigr)^s\biggl(\sum_{l=1}^{t-1} g^l\ot g^{\imath}\biggr) = \sum_{l=1}^{t-1} \binom{l}{s+1} g^{\imath}\bigl(g^{-u\imath}-1\bigr)^s = \binom{t}{s+2} g^{\imath}\bigl(g^{-u\imath}-1\bigr)^s,\\
& \ov{\varphi}_{11}^{01}\xcirc\bigl(\delta^h_{21}\xcirc \ov{\omega}_{21}\bigr)^s\biggl(\sum_{l=1}^{t-1} g^{t(u-u'l)+l}\ot g^{\imath} g^{-u\imath}\biggr)
= \binom{t}{s+2} g^{\imath}g^{-u\imath}\bigl(g^{-u\imath}-1\bigr)^s,\\
&\ov{\varphi}_{11}^{10}\xcirc\bigl(\delta^h_{21}\xcirc \ov{\omega}_{21}\bigr)^s\biggl(\sum_{l=1}^{t-1} g^l\ot g^{\imath}\biggr)= \sum_{l=1}^{t-1} u'\left(\binom{l}{s+1}-\binom{l-1}{s}t\right) g^{\imath} g^{-u\imath}(g^{-u\imath}-1)^{s-1}\\
&\phantom{\ov{\varphi}_{11}^{10}\xcirc\bigl(\delta^h_{21}\xcirc \ov{\omega}_{21}\bigr)^s\biggl(\sum_{l=1}^{t-1} g^l\ot g^{\imath}\biggr)} = u'\left(\binom{t}{s+2}-\binom{t-1}{s+1}t\right) g^{\imath} g^{-u\imath}(g^{-u\imath}-1)^{s-1}\\
&\phantom{\ov{\varphi}_{11}^{10}\xcirc\bigl(\delta^h_{21}\xcirc \ov{\omega}_{21}\bigr)^s\biggl(\sum_{l=1}^{t-1} g^l\ot g^{\imath}\biggr)} = -u'(s+1)\binom{t}{s+2} g^{\imath} g^{-u\imath}(g^{-u\imath}-1)^{s-1},\\
& \ov{\varphi}_{11}^{10}\xcirc\bigl(\delta^h_{21}\xcirc \ov{\omega}_{21}\bigr)^s\biggl(\sum_{l=1}^{t-1} g^{t(u-u'l)+l}\ot  g^{\imath}g^{-u\imath}\biggr) = -u'(s+1)\binom{t}{s+2} g^{\imath}g^{-2u\imath}(g^{-u\imath}-1)^{s-1},
%
%
%
\end{align*}
for all $s\ge 1$ (in the computation of last equality we had used that $g^{u^2}=1$). So, by equality~\eqref{eqq3},
\begin{align}
&\ov{\varphi}_{11}^{01}\xcirc\bigl(\delta^h_{21}\xcirc \ov{\omega}_{21}\bigr)^s\xcirc \delta^h_{21}\xcirc \ov{\phi}_{21}(g^{\imath}) = - \binom{t}{s+2} g^{\imath}\bigl(g^{-u\imath}-1\bigr)^{s+1} &&\text{on $\ov{M}(1)_{02}$,}\label{eqq4}\\
&\ov{\varphi}_{11}^{10}\xcirc\bigl(\delta^h_{21}\xcirc \ov{\omega}_{21}\bigr)^s\xcirc \delta^h_{21}\xcirc \ov{\phi}_{21}(g^{\imath}) = u'(s+1) \binom{t}{s+2} g^{\imath}g^{-u\imath}\bigl(g^{-u\imath}-1\bigr)^s &&\text{on $\ov{M}(1)_{02}$,}\label{eqq5}
\end{align}
for all $s\ge 1$. Thus, by~\eqref{3de4} and~\eqref{eqq4},
\begin{equation}\label{eqqqq1}
\sum_{s=0}^{t-2} \ov{\varphi}_{11}^{01}\xcirc\bigl(\delta^h_{21}\xcirc \ov{\omega}_{21}\bigr)^s\xcirc \delta^h_{21}\xcirc \ov{\phi}_{21}(g^{\imath}) = - \sum_{s=1}^{t-1} \binom{t}{s+1} g^{\imath}\bigl(g^{-u\imath}-1\bigr)^s = - g^{\imath} \left(1-t + \sum_{s=1}^{t-1} g^{-su\imath}\right)
\end{equation}
on $\ov{M}(1)_{02}$. (The last equality follows by induction on $t$ using that $\binom{t}{s+1} = \binom{t-1}{s}+\binom{t-1}{s+1}$).
A similar argument using~\eqref{4de4} and~\eqref{eqq5}, shows that, on $\ov{M}(1)_{02}$ we have
$$
\sum_{s=0}^{t-2} \ov{\varphi}_{11}^{10}\xcirc\bigl(\delta^h_{21}\xcirc \ov{\omega}_{21}\bigr)^s\xcirc \delta^h_{21}\xcirc \ov{\phi}_{21}(g^{\imath}) = \sum_{s=0}^{t-2} u'(s+1) \binom{t}{s+2} g^{\imath}g^{-u\imath}\bigl(g^{-u\imath}-1\bigr)^s = u' g^{\imath} \left(\sum_{s=0}^{t-2} (s+1) g^{-u\imath (s+1)} \right).
$$
Combining this with~\eqref{pepito} and~\eqref{cuentapal}, we obtain that, on $\ov{M}(1)_{02}$,
$$
\wh{d}^{\mathrm{h}}_{21}(g^{\imath}) = \ov{d}^{\mathrm{h}}_{21}(g^{\imath}) - g^{\imath} \left(1-t + \sum_{s=1}^{t-1} g^{-su\imath}\right) +  u' g^{\imath} \left(\sum_{s=0}^{t-2} (s+1) g^{-u\imath (s+1)} \right) = \wh{d}^{\mathrm{h}1}_{021}(g^{\imath}) + \wh{d}^{\mathrm{h}2}_{021}(g^{\imath}).
$$
We now com\-pute the second row of $\mathcal{X}(\mathcal{A})_T$. Since $\ov{\omega}_{12} = 0$ and $\ov{\varphi}_{02} = \ide_{\ov{M}(2)}$, we know that $\wh{d}^{\mathrm{h}}_{12} = \ov{d}^{\mathrm{h}}_{12}+\delta^h_{12}\xcirc \ov{\phi}_{12}$. Moreover, by Remark~\ref{calculo de morfismos en hom} and the definition of $\delta^h_{12}$,
$$
\delta^h_{12}\xcirc \ov{\phi}_{12}([g^{\imath_1}\ot g^{\imath_2}]) = 0\text{ on $\ov{M}(2)_{10}$}\quad\text{and}\quad \delta^h_{12}\xcirc \ov{\phi}_{12}([g^{\imath_1}\ot g^{\imath_2}]) = [g^{(1-u)\imath_1}\ot g^{(1-u)\imath_2}]-[g^{\imath_1}\ot g^{\imath_2}]\text{ on $\ov{M}(2)_{01}$.}
$$
Therefore, by equalities~\eqref{pepito} and~\eqref{cuentapal},
$$
\wh{d}^{\mathrm{h}}_{12}([g^{\imath_1}\ot g^{\imath_2}]) = 0\text{ on $\ov{M}(2)_{10}$}\quad\text{and}\quad \wh{d}^{\mathrm{h}}_{12}([g^{\imath_1}\ot g^{\imath_2}]) = \wh{d}^{\mathrm{h}1}_{012}([g^{\imath_1}\ot g^{\imath_2}]) \text{ on $\ov{M}(2)_{01}$,}
$$
as desired.
\end{proof}

\begin{remark}\label{mascalculos}
Let $\bigl(\wh{C}^N_*(\mathcal{A},\mathds{Z}),\partial_*\bigr)$ be as in subsection~\ref{The full linear cycle set (co)homology}, let $\Tot(\mathcal{X}(\mathcal{A}))$ be the total complex of $\mathcal{X}(\mathcal{A})$ and let $\wh{\varphi}_{**}\colon \bigl(\wh{C}^N_{**}(\mathcal{A},\mathds{Z}),\partial^{\mathrm{h}}_{**}, \partial^{\mathrm{v}}_{**}\bigr) \longrightarrow \mathcal{X}(\mathcal{A})$ be the map obtained by applying Proposition~\ref{para complejos dobles} to the special de\-for\-ma\-tion retracts~\eqref{(aaaa1)}, endowed with the perturbations $\delta^h_{*s}$ given in~\eqref{(bb'')}. By that proposition, the map $\wh{\varphi}_*\colon \bigl(\wh{C}^N_*(\mathcal{A},\mathds{Z}), \partial_*\bigr)\to \Tot(\mathcal{X}(\mathcal{A}))$, induced by $\wh{\varphi}_{**}$, is an homotopy equivalence. Since $\ov{\omega}_{11}=0$ and $\ov{\omega}_{12}=0$, we have $\wh{\varphi}_{01} = \ov{\varphi}_{01} = \ide_{\ov{M}(1)}$ and $\wh{\varphi}_{02} = \ov{\varphi}_{02} = \ide_{\ov{M}(2)}$. On the other hand, by Remark~\ref{calculo de morfismos en hom} and equalities~\eqref{prim comp'} and~\eqref{sec comp'}, for each $0\le \imath < u$ and $0\le \jmath<t$, we have
$$
\wh{\varphi}_{11}(g^{t\imath+\jmath}\ot g^{\imath_1}) = \wh{\varphi}_{11}^{01}(g^{t\imath+\jmath}\ot g^{\imath_1}) + \wh{\varphi}_{11}^{10}(g^{t\imath+\jmath}\ot g^{\imath_1}),
$$
where $\wh{\varphi}_{11}^{01}\colon \ov{D}\ot \ov{M}(1)\longrightarrow \ov{M}(1)_{01}$ and $\wh{\varphi}_{11}^{10}\colon \ov{D}\ot \ov{M}(1)\longrightarrow \ov{M}(1)_{10}$ are the maps given by
\begin{align}
& \wh{\varphi}_{11}^{01}(g^{t\imath+\jmath}\ot g^{\imath_1}) \coloneqq \sum_{s=0}^{\jmath-1} \binom{\jmath}{s+1} g^{\imath_1}\bigl(g^{-u\imath_1}-1\bigr)^s \label{eqqq1'}
\shortintertext{and}
&\wh{\varphi}_{11}^{10}(g^{t\imath+\jmath}\ot g^{\imath_1}) \coloneqq -\imath g^{\imath_1} + \sum_{s=1}^{\jmath-1} u'\left(\binom{\jmath}{s+1}-\binom{\jmath-1}{s}t\right) g^{\imath_1} g^{-u\imath_1}(g^{-u\imath_1}-1)^{s-1}.\label{eqqq2'}
\end{align}
\end{remark}

\begin{proposition}\label{funciones de arriba} For all $0\le \imath< u$ and $0\le \jmath< t$, the following identities hold:
$$
\wh{\varphi}_{11}^{01}(g^{t\imath+\jmath}\ot g^{\imath_1}) = g^{\imath_1}\sum_{l=0}^{\jmath-1} g^{-ul\imath_1}\qquad\text{and}\qquad \wh{\varphi}_{11}^{10}(g^{t\imath+\jmath}\ot g^{\imath_1}) = -\imath g^{\imath_1} + u'g^{\imath_1}\sum_{l=1}^{\jmath-1} (\jmath-l-t) g^{-ul\imath_1}.
$$
\end{proposition}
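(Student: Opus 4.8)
The plan is to obtain both formulas from the closed expressions \eqref{eqqq1'} and \eqref{eqqq2'} recorded in Remark~\ref{mascalculos} by a purely combinatorial manipulation of the binomial sums. The one point that needs care is that $g^{-u\imath_1}-1$ is not invertible in $D$, so I would carry out every cancellation by $X-1$ at the level of polynomial identities in $\mathds{Z}[X]$ (an integral domain, hence $X-1$ is not a zero divisor) and only at the end specialize through the ring homomorphism $\mathds{Z}[X]\to D$ sending $X\mapsto g^{-u\imath_1}$.

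The key auxiliary fact is the identity
$$
\sum_{s=0}^{m-1}\binom{m}{s+1}(X-1)^s = \sum_{l=0}^{m-1}X^l\qquad\text{in $\mathds{Z}[X]$, for every $m\ge 1$.}
$$
I would prove it by multiplying both sides by $X-1$: the left side becomes $\sum_{s=1}^{m}\binom{m}{s}(X-1)^s = \bigl((X-1)+1\bigr)^m-1 = X^m-1$ by the binomial theorem, which is also the value of the right side, and cancelling $X-1$ gives the claim. (Alternatively this is immediate from the hockey-stick relation $\binom{m}{s+1}=\sum_{l=s}^{m-1}\binom{l}{s}$ after swapping the order of summation.) Specializing $X\mapsto g^{-u\imath_1}$ and multiplying by $g^{\imath_1}$ turns \eqref{eqqq1'} at once into the first asserted identity $\wh{\varphi}_{11}^{01}(g^{t\imath+\jmath}\ot g^{\imath_1}) = g^{\imath_1}\sum_{l=0}^{\jmath-1} g^{-ul\imath_1}$.

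For the second identity I would write $X\coloneqq g^{-u\imath_1}$ and treat the two binomial sums in \eqref{eqqq2'} separately. From the case $m=\jmath-1$ of the auxiliary identity one gets $\sum_{s=1}^{\jmath-1}\binom{\jmath-1}{s}(X-1)^{s}=X^{\jmath-1}-1$, hence $\sum_{s=1}^{\jmath-1}\binom{\jmath-1}{s}(X-1)^{s-1}=\sum_{l=0}^{\jmath-2}X^l$ after cancelling $X-1$; and subtracting the $s=0$ term from the case $m=\jmath$ gives $\sum_{s=1}^{\jmath-1}\binom{\jmath}{s+1}(X-1)^{s}=\sum_{l=0}^{\jmath-1}X^l-\jmath=\sum_{l=1}^{\jmath-1}(X^l-1)=(X-1)\sum_{k=0}^{\jmath-2}(\jmath-1-k)X^k$, so $\sum_{s=1}^{\jmath-1}\binom{\jmath}{s+1}(X-1)^{s-1}=\sum_{k=0}^{\jmath-2}(\jmath-1-k)X^k$. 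Multiplying the combination $\sum_{s=1}^{\jmath-1}\bigl(\binom{\jmath}{s+1}-t\binom{\jmath-1}{s}\bigr)(X-1)^{s-1}$ by $u'g^{\imath_1}X$, shifting the index of summation by one, and collecting terms yields $u'g^{\imath_1}\sum_{l=1}^{\jmath-1}(\jmath-l-t)X^l$; adding the term $-\imath g^{\imath_1}$ coming from \eqref{eqqq2'} gives exactly the claimed expression for $\wh{\varphi}_{11}^{10}(g^{t\imath+\jmath}\ot g^{\imath_1})$.

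There is no genuinely hard step here: the whole argument is reindexing of binomial sums plus the binomial theorem. The only place one can slip is the one already flagged — the divisions by $X-1$ must be performed inside $\mathds{Z}[X]$, not inside $D$ where $g^{-u\imath_1}-1$ is a zero divisor — so once every manipulation is phrased as an identity of polynomials, specialization $X\mapsto g^{-u\imath_1}$ concludes the proof.
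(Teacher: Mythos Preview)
Your proof is correct and follows essentially the same route as the paper: both arguments reduce \eqref{eqqq1'} and \eqref{eqqq2'} to the polynomial identity $\sum_{s=0}^{m-1}\binom{m}{s+1}(X-1)^s=\sum_{l=0}^{m-1}X^l$ (and its companion for the second sum), and then specialize $X\mapsto g^{-u\imath_1}$. The only cosmetic difference is that the paper phrases the verification of the auxiliary identity as an induction on $\jmath$ via Pascal's rule $\binom{\jmath}{s+1}=\binom{\jmath-1}{s}+\binom{\jmath-1}{s+1}$, whereas you invoke the binomial theorem directly and cancel $X-1$ in $\mathds{Z}[X]$; your remark that this cancellation must be done in the polynomial ring rather than in $D$ is a nice point of rigor that the paper leaves implicit.
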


\begin{proof} The first equality holds by the last equality in~\eqref{eqqqq1}. By this and~\eqref{eqqq2'} in order to prove the second one it suffices to show that
$$
\sum_{s=1}^{\jmath-1} \binom{\jmath}{s+1} g^{-u\imath_1}(g^{-u\imath_1}-1)^{s-1} = \sum_{l=1}^{\jmath-1} (\jmath-l) g^{-ul\imath_1}.
$$
But this follows by induction on $\jmath$ using that $\binom{\jmath}{s+1} = \binom{\jmath-1}{s} + \binom{\jmath-1}{s+1}$.
\end{proof}

\subsection{Computing the full linear cycle set cohomology}\label{Computing the full linear cycle set cohomology}
Here we will use freely the notations introduced in subsection~\ref{The full linear cycle set (co)homology}. Let $\mathcal{A}$ be as at the beginning of this section and let $\Gamma$ be an additive abelian group. In this subsection we compute $\Ho_N^2(\mathcal{A},\Gamma)$. Moreover, we obtain a family of $2$-cocycles of $\bigl(C_N^*(\mathcal{A},\Gamma),\partial^*\bigr)$ that applies surjectively on $\Ho_N^2(\mathcal{A},\Gamma)$, and we determine when two of these cocycles are cohomologous. By \cite{LV}*{Theorem~5.8} this classify the central extensions of $\mathcal{A}$ by $\Gamma$. We use this fact in order to prove Theorems~A, B~and C.

\medskip

For each $l\in \mathds{N}$ and $\alpha,\beta\in \mathds{N}_0$ we define $\Gamma(l)^{\alpha\beta}\coloneqq \Hom(\ov{M}(l)_{\alpha\beta},\Gamma)$. There are obvious identifications
\begin{align*}
& \Gamma(1)^{\alpha\beta} = \left\{\sum_{0\le {\imath}<v} \gamma_{\imath}  g^{\imath} : \gamma_{\imath}\in \Gamma\text{ for all ${\imath}$ and } \gamma_0 = 0\right\}
\shortintertext{and}
&\Gamma(2)^{\alpha\beta} = \left\{\sum_{0\le \imath,\jmath<v} \gamma_{\imath \jmath}  g^{\imath}\ot g^{\jmath} : \gamma_{\imath\jmath}\in \Gamma\text{ for all $\imath,\jmath$, } \gamma_{\imath\jmath}=\gamma_{\jmath\imath}\text{ and } \gamma_{0\jmath} = 0\text{ for all $\jmath$}\right\}.
\end{align*}
Let $\mathcal{X}(\mathcal{A},\Gamma)\coloneqq (\ov{X}^{**},\wh{d}_{\mathrm{h}}^{**},\wh{d}_{\mathrm{v}}^{**})$ be the cochain double complex obtained by applying the functor $\Hom(-,\Gamma)$ to $\mathcal{X}(\mathcal{A})_T$. In Proposition~\ref{H1} and Theorems~\ref{caso t=1},~\ref{caso 1<t<u} and~\ref{caso t=u=p}, we are going to calculate $\Ho_N^1(\mathcal{A},\Gamma)$ and $\Ho_N^2(\mathcal{A},\Gamma)$. By Remarks~\ref{son iguales2} and~\ref{util para'} in order to carry out this task we must compute $\Ho^1(\mathcal{X}(\mathcal{A},\Gamma))$ and $\Ho^2(\mathcal{X}(\mathcal{A},\Gamma))$. For this we will use strongly that $\mathcal{X}(\mathcal{A},\Gamma)$ is a partial total complex of the diagram $\mathcal{D}(\mathcal{A},\Gamma)^T\coloneqq \Hom(\mathcal{D}(\mathcal{A})_T,\Gamma)$. In particular
$$
\ov{X}^{01}=\Gamma(1)^{00},\quad \ov{X}^{02}=\Gamma(2)^{00}\quad\text{and}\quad \ov{X}^{11}=\Gamma(1)^{01}\oplus \Gamma(1)^{10}.
$$
We also want to obtain $2$-cocycles of $\wh{C}_N^*(\mathcal{A},\Gamma)$ that represent all the elements of $\Ho_N^2(\mathcal{A},\Gamma)$. These cocycles are obtained by applying $\Hom(\wh{\varphi}_2,\Gamma)$ to $2$-cocycles of $\mathcal{X}(\mathcal{A},\Gamma)$, where
$$
\wh{\varphi}_2\colon \ov{M}(2)\oplus (\ov{D}\ot\ov{M}(1))\longrightarrow \ov{X}_{02}\oplus \ov{X}_{11} = \ov{M}(2)_{00}\oplus \ov{M}(1)_{01} \oplus \ov{M}(1)_{10},
$$
is as in Remark~\ref{mascalculos}. Thus $\wh{\varphi}_2$ is given by the matrix
\begin{equation}\label{matriz}
[\wh{\varphi}_2]\coloneqq \begin{pmatrix} \ide_{\ov{M}(2)} & 0 \\ 0 & \wh{\varphi}_{11}^{01}\\ 0 & \wh{\varphi}_{11}^{10}  \end{pmatrix}.
\end{equation}
From now on we set $\wh{d}^{\alpha\beta s}_{\mathrm{v}}\coloneqq \Hom\bigl(\wh{d}_{\alpha\beta s}^{\mathrm{v}},\Gamma\bigr)$ and $\wh{d}^{\alpha\beta s}_{\mathrm{h}i}\coloneqq \Hom\bigl(\wh{d}_{\alpha\beta s}^{\mathrm{h}i},\Gamma\bigr)$.

\begin{remark}\label{pero} Applying  the functor $\Gamma\ot -$ to $\mathcal{X}(\mathcal{A})_T$ we obtain a chain double complex that gives $\Ho_1^N(\mathcal{A},\Gamma)$ and $\Ho_2^N(\mathcal{A},\Gamma)$. But we are not interested in the computation of these homology groups in this paper.
\end{remark}

Here and subsequently, we adopt the convention that $\gamma_{\imath\jmath}\coloneqq \gamma_{\imath'\jmath'}$, where $\imath'$ and $\jmath'$ are the remainder of the integer division of $\imath$ by $v$ and $\jmath$ by $v$, respectively.

\begin{proposition}\label{H1} We have $\Ho_N^1(\mathcal{A},\Gamma) = \wh{\Ho}_N^1(\mathcal{A},\Gamma)\simeq \Gamma_u$.
\end{proposition}

\begin{proof} By Remarks~\ref{son iguales2} and~\ref{util para'} we know that $\Ho_N^1(\mathcal{A},\Gamma) = \wh{\Ho}_N^1(\mathcal{A},\Gamma) = \Ho^1(\mathcal{X}(\mathcal{A},\Gamma))$. By definition
$$
\wh{d}^{002}_{\mathrm{v}}\left(\sum \gamma_{\imath}g^{\imath}\right) = \sum_{\imath,\jmath} (\gamma_{\imath+\jmath}-\gamma_{\imath}-\gamma_{\jmath}) g^{\imath}\ot g^{\jmath}\quad\text{and}\quad \wh{d}^{011}_{\mathrm{h}1}\left(\sum \gamma_{\imath}g^{\imath}\right) = \sum_{\imath} \left(\gamma_{\imath-u\imath} - \gamma_{\imath}\right)g^{\imath}.
$$
From the first equality we get
$$
\ker(\wh{d}^{002}_{\mathrm{v}}) = \left\{\sum \gamma_{\imath}g^{\imath}: \gamma_{\imath} = \imath \gamma_1 \text{ and } v\gamma_1 = \gamma_v = 0 \right\}.
$$
Consequently, $-u\gamma_1 = (1-u)\gamma_1 -\gamma_1 = \gamma_{1-u} - \gamma_1$, and so
$$
\Ho^1(\mathcal{X}(\mathcal{A},\Gamma)) = \ker(\wh{d}^{002}_{\mathrm{v}})\cap  \ker(\wh{d}^{011}_{\mathrm{h}1}) = \left\{\sum \gamma_{\imath}g^{\imath}: \gamma_{\imath} = \imath \gamma_1 \text{ and } u\gamma_1 = 0 \right\},
$$
which is clearly isomorphic to $\Gamma_{\!u}$.
\end{proof}

Our next purpose is to compute $\wh{\Ho}_N^2(\mathcal{A},\Gamma)$.

\begin{lemma}\label{lema 5.3} $\sum \gamma_{\imath\jmath}  g^{\imath}\ot g^{\jmath} \in \ker(\wh{d}^{003}_{\mathrm{v}})$ if and only if $\gamma_{\imath\jmath} = \sum_{k=\jmath}^{\imath+\jmath-1} \gamma_{1k} - \sum_{k=1}^{\imath-1} \gamma_{1k}$ for $1\le \imath,\jmath<v$.
\end{lemma}

\begin{proof} Assume that $\sum \gamma_{\imath\jmath}  g^{\imath}\ot g^{\jmath} \in \ker(\wh{d}^{003}_{\mathrm{v}})$. Then, for all $a\le b$, we have
$$
0 = \wh{d}^{003}_{\mathrm{v}}\Bigl(\sum \gamma_{\imath\jmath}  g^{\imath}\ot g^{\jmath}\Bigl)(g\ot g^a\ot g^b) = -\gamma_{ab} + \gamma_{a+1,b} - \gamma_{1,a+b} + \gamma_{1a}.
$$
Thus, $\gamma_{a+1,b} = \gamma_{ab} + \gamma_{1,a+b} - \gamma_{1a}$. An inductive argument using this fact proves that the statement is true when $a\le b$. For $a>b$, we have
$$
\gamma_{ab} = \gamma_{ba} = \sum_{a\le k <a+b} \gamma_{1k} - \sum_{1\le k <b} \gamma_{1k} = \sum_{b\le k <a+b} \gamma_{1k} - \sum_{1\le k <a} \gamma_{1k}.
$$
Conversely, assume that $\gamma_{\imath\jmath} = \sum_{k=\jmath}^{\imath+\jmath-1} \gamma_{1k} - \sum_{k=1}^{\imath-1} \gamma_{1k}$ for $1\le \imath,\jmath<v$. We must show that
$$
\gamma_{bc} - \gamma_{a+b,c} + \gamma_{a,b+c} - \gamma_{ab} = 0\qquad\text{for all $0\le a,b,c< v$}.
$$
But this follows easily using that $\gamma_{\imath\jmath} = \sum_{k=\jmath}^{\imath+\jmath-1} \gamma_{1k} - \sum_{k=1}^{\imath-1} \gamma_{1k}$ for all $\imath,\jmath\in \mathds{N}$.
\end{proof}

\begin{remark}\label{rem 5.4} Lemma~\ref{lema 5.3} implies that each $\sum \gamma_{\imath\jmath}  g^{\imath}\ot g^{\jmath} \in \ker\bigl(\wh{d}^{003}_{\mathrm{v}}\bigr)$ is uniquely determined by $\gamma_{11}, \dots,\gamma_{1,v-1}$. For example, for each $1\le b<v$, the element $f_b(\gamma)\coloneqq \sum \Lambda(\gamma,b)_{\imath\jmath}  g^{\imath}\ot g^{\jmath}$, where
\begin{equation}\label{bala1}
\Lambda(\gamma,b)_{\imath\jmath}\coloneqq \begin{cases} \gamma &\text{if $\imath\le b$ and $b-\imath<\jmath\le b$,}\\ -\gamma &\text{if $\imath>b$ and $b<\jmath\le v-\imath+b$,}\\ 0 &\text{otherwise,}
\end{cases}
\end{equation}
is the unique  $\sum \gamma_{\imath\jmath} g^{\imath}\ot g^{\jmath}\in \ker\bigl(\wh{d}^{003}_{\mathrm{v}}\bigr)$ with $\gamma_{1b} = \gamma$ and $\gamma_{1\jmath}=0$ for $\jmath\ne b$. Note that
$$
\sum \gamma_{\imath\jmath}  g^{\imath}\ot g^{\jmath} = \sum_{b=1}^{v-1} f_b(\gamma_{1b})\qquad\text{for each $\sum \gamma_{\imath\jmath}  g^{\imath}\ot g^{\jmath} \in \ker\bigl(\wh{d}^{003}_{\mathrm{v}}\bigr)$.}
$$
Thus $\{f_b(\gamma): 1\le b<v\text{ and } \gamma\in\Gamma\}$ generate $\ker\bigl(\wh{d}^{003}_{\mathrm{v}}\bigr)$.
\end{remark}

\begin{remark}\label{rem 5.5} A direct computation shows that
$$
\wh{d}_{\mathrm{v}}^{002}(\gamma  g^{\imath}) = -\sum_{\substack{\jmath=1 \\ \jmath\ne \imath}}^{v-1} \gamma  (g^{\imath}\ot g^{\jmath}+g^{\jmath}\ot g^{\imath}) -2\gamma g^{\imath}\ot g^{\imath} + \sum_{\substack{a,b = 1\\ a+b\equiv \imath\pmod{v}}}^{v-1} \gamma   g^a\ot g^b.
$$
Thus, by Remark~\ref{rem 5.4},
$$
\wh{d}_{\mathrm{v}}^{002}(\gamma  g^{\imath}) = \begin{cases} - 2 f_1(\gamma) - f_2(\gamma) -\cdots- f_{v-1}(\gamma) & \text{if $\imath = 1$,}\\  f_{\imath-1}(\gamma) - f_{\imath}(\gamma) & \text{if $\imath \ne 1$.}
\end{cases}
$$
Consequently, since the $f_{\imath}(\gamma)$'s generates $\ker\bigl(\wh{d}_{\mathrm{v}}^{003}\bigr)$, we have $\ker\bigl(\wh{d}_{\mathrm{v}}^{003}\bigr)/\ima\bigl(\wh{d}_{\mathrm{v}}^{002}\bigr) = \Gamma/v\Gamma$.
\end{remark}

\begin{lemma}\label{Calculo de wh{d}_{mathrm{h}1}^{012}(f_1(gamma))} Let $\gamma\in \Gamma$. If $u=v$, then $\wh{d}_{\mathrm{h}1}^{012}(f_1(\gamma)) = 0$. Otherwise
\begin{equation}\label{zazaza}
\begin{aligned}
\wh{d}_{\mathrm{h}1}^{012}(f_1(\gamma)) & = - \gamma g\ot g - \sum_{k=1}^{u'} \gamma g\ot g^{kt} - \sum_{k=u'+1}^{2u'-1} \gamma g\ot g^{kt+1} - \sum_{h=2}^{t-1} \sum_{k=hu'}^{(h+1)u'-1} \gamma g\ot g^{kt+h}\\
& + \sum_{\imath=2}^{v-1}\sum_{\jmath=1}^{v-1} \bigl(\Lambda(\gamma,1)_{(1-u)\imath,(1-u)\jmath}-\Lambda(\gamma,1)_{\imath\jmath}\bigr) g^{\imath} \ot g^{\jmath}.
\end{aligned}
\end{equation}
\end{lemma}

\begin{proof} By definition
$$
\wh{d}_{\mathrm{h}1}^{012}(f_1(\gamma)) = \sum \bigl(\Lambda(\gamma,1)_{(1-u)\imath,(1-u)\jmath}-\Lambda(\gamma,1)_{\imath\jmath} \bigr) g^{\imath}\ot g^{\jmath}.
$$
We will use~\eqref{bala1} in order to compute $\Lambda(\gamma,1)_{1-u,(1-u)\jmath}$. In order to carry out this task, for each $0<\jmath<v$ we need to find $k$ such that $0\le kv+(1-u)\jmath < v$. But this happens if and only if $(k-1)v<\jmath(u-1)\le kv$, and it is evident that such a $k$ there exists and it is unique. Moreover, $1\le k<u$ and $\jmath(u-1)\ne kv$. In fact, if $k\ge u$, then $kv+(1-u)\jmath > uv+(1-u)v\ge v$, while if $\jmath(u-1) = kv$, then $v\mid \jmath$, because $\gcd(u-1,v)=1$. By equality~\eqref{bala1}, for all $\jmath, k$, such that $0<\jmath<v$ and $0< kv+(1-u)\jmath < v$, we have
$$
\Lambda(\gamma,1)_{v-u+1,kv+\jmath(1-u)} = \begin{cases}\phantom{-} \gamma &\text{if $u=v$ and $0<kv+\jmath(1-u)\le 1$,}\\ -\gamma & \text{if $u<v$ and $1< kv+\jmath(1-u)\le u$,}\\ \phantom{-} 0 & \text{otherwise.} \end{cases}
$$
Consequently, $\Lambda(\gamma,1)_{v-u+1,kv+\jmath(1-u)} = \gamma$ if and only if $u=v$ and $k=\jmath = 1$. From this and Remark~\ref{rem 5.4}~it~fol\-lows easily that if $u=v$, then $\wh{d}_{\mathrm{h}1}^{012}(f_1(\gamma)) = 0$. Assume now that $u<v$. We next purpose~is~to~de\-ter\-mine when $\Lambda(\gamma,1)_{v-u+1,kv+\jmath(1-u)} = -\gamma$. Note that
$$
1< kv+\jmath(1-u)\le u \Leftrightarrow kv-u\le \jmath(u-1)<kv-1 \Leftrightarrow kt+\frac{kt-u}{u-1}\le \jmath < kt+\frac{kt-u}{u-1}+1.
$$
Thus,
$$
\Lambda(\gamma,1)_{v-u+1,kv+\jmath(1-u)} = -\gamma \quad\text{if and only if}\quad \jmath = kt+\left\lceil \frac{kt-u}{u-1}\right\rceil.
$$
Write $k = hu'+l$, where $h=0$ and $1\le l\le u'$, or $h=1$ and $1\le l< u'$, or $1<h<t$ and $0\le l< u'$. Then
$$
\frac{kt-u}{u-1} = \frac{(hu'+l)t-u}{u-1} = \frac{hu+lt-u}{u-1} = \frac{(h-1)(u-1)-h+lt+h-1}{u-1} = h-1 + \frac{lt+h-1}{u-1}.
$$
We claim that $\bigl\lceil \frac{kt-u}{u-1}\bigr\rceil = h$. To check this we must prove that $0<lt+h-1<l$. Assume first that $h=0$. Then $lt-1>0$, because $t>1$ and $l>0$; while $lt-1<u't-1=u-1$, because $0<l<u'$. Assume now that $1\le h <t$. Then $lt+h-1>0$, because $l>0$ or $h>1$; while $lt+h-1\le (u'-1)t+h-1 = u-t+h-1<u-1$. Summarizing, for all $\jmath, k$, such that $0<\jmath<v$ and $0< kv+(1-u)\jmath < v$, we have
$$
\Lambda(\gamma,1)_{v-u+1,kv+\jmath(1-u)} = -\gamma \quad\text{if and only if}\quad \jmath = \begin{cases} kt & \text{if $1\le k\le u'$,}\\ kt+1 & \text{if $u'+1\le k< 2u'$,}\\ kt+h & \text{if $hu'\le k< (h+1)u'$ with $1<h<t$.}\end{cases}
$$
Using this fact it is easy to see that equality~\eqref{zazaza} holds.
\end{proof}

\begin{remark}\label{fundam} Let $B,Z\subseteq \Gamma(2)^{00}\oplus \Gamma(1)^{01}\oplus \Gamma(1)^{10}$ be the $2$-coboundaries and the $2$-cocycles of $\mathcal{X}(\mathcal{A},\Gamma)$ respectively, and let $Z'\subseteq Z$ be the subgroup of cocycles $z = \bigl(\sum \gamma_{\imath\jmath} g^{\imath}\ot g^{\jmath},\sum \gamma_{\imath} g^{\imath},\sum \gamma'_{\imath} g^{\imath}\bigr)$, such that $\sum \gamma_{\imath\jmath} g^{\imath}\ot g^{\jmath} = f_1(\gamma)$ for some $\gamma\in \Gamma$. By Remark~\ref{rem 5.5} we know that $\Ho^2(\mathcal{X}(\mathcal{A},\Gamma)) = Z'/B\cap Z'$. Moreover, since $f_1(\gamma)\in\ker\bigl(\wh{d}_{\mathrm{v}}^{003}\bigr)$, a triple $z = \bigl(f_1(\gamma),\sum \gamma_{\imath} g^{\imath},\sum \gamma'_{\imath} g^{\imath}\bigr)$ is in $Z'$ if and only if
\begin{enumerate}

\item $\wh{d}^{201}_{\mathrm{h}0}\bigl(\sum \gamma'_{\imath} g^{\imath}\bigr) = 0$, $\wh{d}^{111}_{\mathrm{h}1}\bigl(\sum \gamma'_{\imath} g^{\imath}\bigr) = 0$ and $\wh{d}^{102}_{\mathrm{v}}\bigl(\sum \gamma'_{\imath} g^{\imath}\bigr) = 0$,

\item $\wh{d}^{021}_{\mathrm{h}2}\bigl(\sum \gamma'_{\imath} g^{\imath}\bigr) = - \wh{d}^{021}_{\mathrm{h}1}\bigl(\sum \gamma_{\imath} g^{\imath}\bigr)$,

\item $\wh{d}^{012}_{\mathrm{v}}\bigl(\sum \gamma_{\imath} g^{\imath}\bigr) = - \wh{d}^{012}_{\mathrm{h}1}\bigl(f_1(\gamma)\bigr)$.

\end{enumerate}
Clearly the first condition is satisfied if and only if $u\gamma'_{\imath} = 0$ for all $\imath$, $\gamma'_{\imath} = \gamma'_{\imath-\imath u}$ for all $\imath$, and $\gamma'_{\imath+\jmath} = \gamma'_{\imath}+\gamma'_{\jmath}$ for all $\imath,\jmath$. But this happens if and only if
\begin{equation}\label{pepitito}
\gamma'_{\imath} = \imath\gamma'_1\text{ for all $\imath$}\quad\text{and}\quad u\gamma'_1 = 0.
\end{equation}
On the other hand, item~(3) says that
$$
\sum (\gamma_{\imath} + \gamma_{\jmath}-\gamma_{\imath+\jmath})g^{\imath}\ot g^{\jmath} = \wh{d}^{012}_{\mathrm{v}}\Bigl(\sum \gamma_{\imath} g^{\imath}\Bigr) = - \wh{d}^{012}_{\mathrm{h}1}\bigl(f_1(\gamma)\bigr),
$$
which, by Lemma~\ref{Calculo de wh{d}_{mathrm{h}1}^{012}(f_1(gamma))}, implies that

\begin{enumerate}[itemsep=0.7ex, topsep=1.0ex, label=(\arabic*), start=4, leftmargin=0.9cm]

\item If $t = 1$ (or, equivalently, $u = v$), then $\gamma_{\imath} = \imath \gamma_1$. Moreover $v\gamma_1 = \gamma_v = 0$.

\item If $1 < t = u$ (or, equivalently, $u'=1$), then
\begin{equation}\label{maron}
\gamma_{k t+ l} = \begin{cases} (kt+l)\gamma_1 - (k+1)\gamma &\text{if $k=0$ and $2 \le l \le t$,}\\
(kt+l)\gamma_1 - (k+1)\gamma &\text{if $k=1$ and $1\le l \le t+2$,}\\
(kt+l)\gamma_1 - (k+1)\gamma &\text{if $2\le k<t-1$ and $k<l \le t+k+1$,}
\end{cases}
\end{equation}
and $v\gamma_1 - u\gamma = \gamma_v = 0$.

\item If $1<t<u$ (or equivalently, $1<u'<u$), then

\begin{equation}\label{maron'}
\qquad\quad\gamma_{k t+ l} = \begin{cases} (kt+l)\gamma_1 - (k+1)\gamma &\text{if $k=0$ and $2 \le l \le t$,}\\
(kt+l)\gamma_1 - (k+1)\gamma &\text{if $1\le k< u'$ and $1\le l \le t$,}\\
(kt+l)\gamma_1 - (k+1)\gamma &\text{if $k=u'$ and $1\le l \le t+1$,}\\
(kt+l)\gamma_1 - (k+1)\gamma &\text{if $u'<k\le 2u'-2$ and $2\le l \le t+1$,}\\
(kt+l)\gamma_1 - (k+1)\gamma &\text{if $2\le h<t$, $k=hu'-1$ and $h\le l \le t+h$,}\\
(kt+l)\gamma_1 - (k+1)\gamma &\text{if $2\le h < t$, $hu'\le k\le (h+1)u'-2$ and $h< l \le t+h$,}
\end{cases}
\end{equation}
and $v\gamma_1 - u\gamma = \gamma_v = 0$ (note that, if $u'=2$, then the fourth line in~\eqref{maron'} is empty; while, if $t=2$, then the last two lines are empty).
\end{enumerate}
Conversely under these conditions, item~(3) holds. Summarizing, items~(1) and~(3) are satisfied if and only if equality~\eqref{pepitito} and conditions~(4), (5) or~(6) are fulfilled, depending on the case. Finally, by the definition of $\wh{d}^{021}_{\mathrm{h}2}$ and equality~\eqref{pepitito},
\begin{equation}\label{etiq1}
\!\wh{d}^{021}_{\mathrm{h}2}\Bigl(\sum \gamma'_{\imath} g^{\imath}\Bigr) = \sum_{\imath} \left(\!-\gamma'_{\imath} + u'\sum_{s=1}^{t-1} s \gamma'_{\imath-\imath us}\!\right) g^{\imath} = \sum_{\imath} \left(\!u'\binom{t}{2}-1\right) \gamma'_{\imath} g^{\imath} = \begin{cases} - \sum \gamma'_{\imath}  g^{\imath} &\text{if $t\ne 2$,}\\ \frac{u-2}{2}\sum \gamma'_{\imath}g^{\imath} &\text{if $t=2$.} \end{cases}
\end{equation}
\end{remark}

\begin{lemma}\label{lema previo} Let $\sum \gamma_{\imath} g^{\imath}\in \Gamma(1)^{01}$ and $\gamma\in\Gamma$. If $1\!<\!u\!<\!v\!=\!u^2$ and condition~(5) holds, or $1\!<\!u\!<\!v\!<\!u^2$ and condition~(6) holds, then
\begin{equation}\label{ccc111'}
\wh{d}_{\mathrm{h}1}^{021}\Bigl(\sum \gamma_{\imath} g^{\imath}\Bigr) = - \sum_{\jmath=0}^{\eta-\nu-1} \sum_{\{\imath:v(\imath)=\jmath\}}
\left(\imath+ u'p^{2\jmath}\binom{t(\jmath)}{2}\right)(t\gamma_1-\gamma)g^{\imath} - \sum_{\{\imath:t\mid \imath\}} \imath \bigl(t\gamma_1-\gamma\bigr)g^{\imath},
\end{equation}
where $v(\imath)\coloneqq \max\{l\ge 0:p^l \mid \imath\}$ and $t(\jmath)\coloneqq t/p^{\jmath}$.
\end{lemma}

\begin{proof} Assume first that $v<u^2$. By Remark~\ref{fundam} we know that $v\gamma_1-u\gamma=0$ and equality~\eqref{maron'} is satisfied. A direct computation shows that this equality can be written as
\begin{equation}\label{maron''''}
\gamma_{k t+l} = \begin{cases} (k t+l)\gamma_1 - (k+1)\gamma &\text{if $k = 0$ and $2\le l<t$,}\\
(k t+l)\gamma_1 - k\gamma &\text{if $1\le k\le u'$ and $l= 0$,}\\
(k t+l)\gamma_1 - (k+1)\gamma &\text{if $1\le k\le u'$ and $1\le l<t$,}\\
(k t+l)\gamma_1 - k\gamma &\text{if $u'<k< 2u'$ and $0\le l\le 1$,}\\
(k t+l)\gamma_1 - (k+1)\gamma &\text{if $u'<k< 2u'$ and $2\le l<t$,}\\
(kt+l)\gamma_1 - k\gamma &\text{if $2\le h < t$, $hu'\le k<(h+1)u'$ and $0\le l\le h$,}\\
(kt+l)\gamma_1 - (k+1)\gamma &\text{if $2\le h < t$, $hu'\le k<(h+1)u'$ and $h<l<t$.}
\end{cases}
\end{equation}
Write $\imath \coloneqq p^{v(\imath)}\imath'$. Clearly
$$
\sum_{s=0}^{t-1} \gamma_{\imath-su\imath} = \sum_{s=0}^{t-1} \gamma_{\imath-s u p^{v(\imath)}\imath'} = \sum_{s=0}^{t-1} \gamma_{\imath+sup^{v(\imath)}} = p^{v(\imath)}\sum_{s=0}^{t(v(\imath))-1} \gamma_{\imath+sup^{v(\imath)}}.
$$	
Con\-se\-quently,
$$
\wh{d}_{\mathrm{h}1}^{021}\Bigl(\sum \gamma_{\imath} g^{\imath}\Bigr) = - \sum_{\imath} \biggl(\sum_{s=0}^{t-1} \gamma_{\imath-su\imath}\biggr) g^{\imath} = \sum_{\jmath=0}^{\eta-\nu-1}  p^{\jmath} \sum_{\{\imath:v(\imath)=\jmath\}} \sum_{s=0}^{t(\jmath)-1} \gamma_{\imath+sup^{\jmath}}g^{\imath} + \sum_{\{\imath:t\mid\imath\}} t\gamma_{\imath} g^{\imath}.
$$
By equality~\eqref{maron''''}, if $t\mid\imath$, then $t\gamma_{\imath} = t\imath \gamma_1 - \imath\gamma$. So, in order to finish the proof of equality~\eqref{ccc111'}, we only must check that
\begin{equation}\label{ccc22'}
\sum_{s=0}^{t(\jmath)-1} \gamma_{\imath+sup^{\jmath}} = \left(\frac{\imath}{p^{\jmath}} + u'p^{\jmath}\binom{t(\jmath)}{2}\right)(t\gamma_1-\gamma) \qquad\text{for all $\imath$ such that $v(\imath) = \jmath$}.
\end{equation}
We divided the proof of this in five cases. In the first four we use equality~\eqref{maron''''} and that $u=tu'$.

\smallskip
		
\noindent 1)\enspace If $\imath = 1$, then $\jmath\coloneqq v(\imath) = 0$, and so
$$
\qquad \sum_{s=0}^{t(\jmath)-1} \gamma_{\imath+sup^{\jmath}} = \sum_{s=0}^{t-1} \bigl(1+su\bigr)\gamma_1 - \left(u'+1 + \sum_{s=2}^{t-1} su'\right)\gamma = \left(1 + u'\binom{t}{2}\right) \bigl(t\gamma_1-\gamma\bigr).
$$

\smallskip
		
\noindent 2)\enspace If $1<\imath<t$, then
$$
\qquad \sum_{s=0}^{t(\jmath)-1} \gamma_{\imath+sup^{\jmath}} = \sum_{s=0}^{t(\jmath)-1} \bigl(\imath+sup^{\jmath}\bigr)\gamma_1 - \left(\sum_{s=0}^{\frac{\imath}{p^{\jmath}}-1} (su'p^{\jmath}+1) + \sum_{s=\imath}^{t(\jmath)-1} su'p^{\jmath}\right) \gamma = \left(\frac{\imath}{p^{\jmath}} + u'p^{\jmath}\binom{t(\jmath)}{2}\right) \bigl(t\gamma_1-\gamma\bigr).
$$

\smallskip

\noindent 3)\enspace If $t <\imath<u$, then $\imath = tq+\bar{\imath}$ with $0<q<u'$ and $0<\bar{\imath}<t$, which implies that
$$
\qquad \sum_{s=0}^{t(\jmath)-1} \gamma_{\imath+sup^{\jmath}} = \sum_{s=0}^{t(\jmath)-1} \gamma_{\bar{\imath}+(su'p^{\jmath}+q)t} = \sum_{s=0}^{t(\jmath)-1}(\imath+ su'p^{\jmath}t)\gamma_1 -  \left(\sum_{s=0}^{\frac{\bar{\imath}}{p^{\jmath}}-1} (su'p^{\jmath}+q+1) + \sum_{s=\frac{\bar{\imath}}{p^{\jmath}}}^{t(\jmath)-1}(su'p^{\jmath}+q)\right)\gamma.
$$
Thus
$$
\sum_{s=0}^{p-1} \gamma_{\imath+sup^{\jmath}} = \left(t\frac{\imath}{p^{\jmath}} + u'p^{\jmath}t\binom{t(\jmath)}{2}\right)\gamma_1 -\left(\frac{\bar{\imath}}{p^{\jmath}} + t(\jmath)q + u'p^{\jmath}\binom{t(\jmath)}{2}\right)\gamma = \left(\frac{\imath}{p^{\jmath}} + u'p^{\jmath}\binom{t(\jmath)}{2}\right)(t\gamma_1-\gamma).
$$
		
\smallskip

\noindent 4)\enspace If $\varsigma u <\imath<(\varsigma\!+\!1)u$, where $0< \varsigma<p^{\jmath}$, then $\imath = tq+\bar{\imath}$ with $\varsigma u'\le q<(\varsigma\!+\!1)u'$ and $0<\bar{\imath}<t$. So,
$$
\sum_{s=0}^{t(\jmath)-1} \gamma_{\imath+sup^{\jmath}} = \sum_{s=0}^{t(\jmath)-1} \gamma_{\bar{\imath}+(su'p^{\jmath}+q)t} = \sum_{s=0}^{t(\jmath)-1}(\imath+ su'p^{\jmath}t)\gamma_1 -  \left(\sum_{s=0}^{\left\lceil\frac{\bar{\imath}-\varsigma}{p^{\jmath}}\right\rceil-1} (su'p^{\jmath}+q+1) + \sum_{s={\left\lceil\frac{\bar{\imath}-\varsigma}{p^{\jmath}}\right\rceil}}^{t(\jmath)-1}(su'p^{\jmath}+q)\right)\gamma.
$$
Since $\left\lceil\frac{\bar{\imath}-\varsigma}{p^{\jmath}}\right\rceil=\frac{\bar{\imath}}{p^{\jmath}}$, this implies that
$$
\sum_{s=0}^{t(\jmath)-1} \gamma_{\imath+sup^{\jmath}} = \left(t\frac{\imath}{p^{\jmath}} + u'p^{\jmath}t\binom{t(\jmath)}{2}\right)\gamma_1 -\left(\frac{\bar{\imath}}{p^{\jmath}} + t(\jmath)q + u'p^{\jmath}\binom{t(\jmath)}{2}\right)\gamma = \left(\frac{\imath}{p^{\jmath}} + u'p^{\jmath}\binom{t(\jmath)}{2}\right)(t\gamma_1-\gamma).
$$

\smallskip

\noindent 5)\enspace If $p^{\jmath}u<\imath$, then by the previous cases we have
$$
\sum_{s=0}^{t(\jmath)-1} \gamma_{\imath+sup^{\jmath}} = \sum_{s=0}^{t(\jmath)-1} \gamma_{\bar{\imath}+sup^{\jmath}} = \left(\frac{\bar{\imath}}{p^{\jmath}} + u'p^{\jmath}\binom{t(\jmath)}{2}\right)(t\gamma_1-\gamma) = \left(\frac{\imath}{p^{\jmath}} + u'p^{\jmath}\binom{t(\jmath)}{2}\right)(t\gamma_1-\gamma),
$$
where $1\le\bar{\imath}<p^{\jmath}u$ is the remainder of the integer division of $\imath$ by $p^{\jmath}u$ (the last equality follows from the fact that $v\gamma_1-u\gamma=0$).
		
\smallskip

Assume now that $v=u^2$. By Remark~\ref{fundam} we know that $v\gamma_1-u\gamma=0$ and equality~\eqref{maron} is satisfied. A direct computation shows that this equality can be written as
\begin{equation}\label{maron''}
\gamma_{kt+l} = \begin{cases} (k t+l)\gamma_1 - (k+1)\gamma &\text{if $k=0$ and $2\le l<t$,}\\
(kt+l)\gamma_1 - k\gamma &\text{if $k=1$ and $l=0$,}\\
(kt+l)\gamma_1 - (k+1)\gamma &\text{if $k=1$ and $1\le l<t$,}\\
(kt+l)\gamma_1 - k\gamma &\text{if $1<k<t$ and $0\le l\le k$,}\\
(kt+l)\gamma_1 - (k+1)\gamma &\text{if $1<k<t$ and $k<l<t$.}
\end{cases}
\end{equation}
In the case $v=u^2$ the proof of equality~\eqref{ccc111'} follows the same pattern than in the case $v<u^2$, but using equality~\eqref{maron''} instead of~\eqref{maron''''}. We leave the details to the reader.
\end{proof}

\begin{lemma}\label{lema previo1} Let $\sum \gamma_{\imath} g^{\imath}\in \Gamma(1)^{01}$, $\sum \gamma'_{\imath} g^{\imath}\in \Gamma(1)^{10}$ and $\gamma\in\Gamma$. Assume the hypothesis of Lemma~\ref{lema previo} holds and that $u>2$. Then equality~\eqref{pepitito} and condition~(2) are satisfied if and only if $\gamma'_{\imath} = -\imath(t\gamma_1-\gamma)$ for all $\imath$.
\end{lemma}

\begin{proof} By Lemma~\ref{lema previo} we have
\begin{equation}\label{ccc11'}
\wh{d}_{\mathrm{h}1}^{021}\Bigl(\sum \gamma_{\imath} g^{\imath}\Bigr) = - \sum_{\jmath=0}^{\eta-\nu-1} \sum_{\{\imath:v(\imath)=\jmath\}}
\left(\imath+ u'p^{2\jmath}\binom{t(\jmath)}{2}\right)(t\gamma_1-\gamma)g^{\imath} - \sum_{\{\imath:t\mid \imath\}} \imath \bigl(t\gamma_1-\gamma\bigr)g^{\imath}.
\end{equation}
Assume first that $p$ is odd. Then $u\mid u'p^{2\jmath}\binom{t(\jmath)}{2}$ for all $0\le \jmath <\eta-\nu$, and thus $u'p^{2\jmath}\binom{t(\jmath)}{2}(t\gamma_1-\gamma) = 0$, since $u(t\gamma_1-\gamma) = v\gamma_1-u\gamma =0$. Consequently, by equalities~\eqref{etiq1} and~\eqref{ccc11'}, condition~(2) holds if and only if $\gamma'_{\imath} = -\imath (t\gamma_1-\gamma)$ for all $\imath$ (note that these $\gamma'_{\imath}$'s satisfy condition~\eqref{pepitito}). Assume now that $p=2$ and $\nu>1$. Since $2\mid t$, we have $u'\binom{t}{2} = \frac{u}{2}(t-1)\equiv \frac{u}{2} \pmod{u}$. Consequently, if condition~(2) is true, then
\begin{equation}\label{ec11}
\left(\frac{u}{2}-1\right)\gamma'_1 = \left(1 + u'\binom{t}{2}\right)(t\gamma_1-\gamma) = \left(1 + \frac{u}{2}\right)(t\gamma_1-\gamma).
\end{equation}
Since $u\gamma'_1 = u(t\gamma_1-\gamma) = 0$, this implies that $-2\gamma'_1 = 2(t\gamma_1-\gamma)$, and so $-\frac{u}{2}\gamma'_1 = \frac{u}{2}(t\gamma_1-\gamma)$, because $4\mid u$. Adding this equality to~\eqref{ec11}, we obtain that $-\gamma'_1 = (1+u) (t\gamma_1-\gamma) = t\gamma_1-\gamma$. By condition~\eqref{pepitito} this implies that $\gamma'_{\imath} = -\imath(t\gamma_1-\gamma)$ for all $\imath$ (note that these $\gamma'_{\imath}$'s satisfy condition~\eqref{pepitito}). Conversely assume that $\gamma'_{\imath} = -\imath(t\gamma_1-\gamma)$ for all $\imath$. By equalities~\eqref{etiq1} and~\eqref{ccc11'}, in order to prove that condition~(2) is satisfied, we must check that
\begin{equation}\label{ec22}
\left(\frac{u}{2}-1\right)\gamma'_{\imath} = \left(\imath+ u'2^{2\jmath}\binom{t(\jmath)}{2}\right)(t\gamma_1-\gamma),
\end{equation}
for all $\imath$ such that $\jmath\coloneqq v(\imath)\in \{0,\dots,\eta-\nu-1\}$. If $\jmath>0$, then
$$
u'2^{2\jmath}\binom{t(\jmath)}{2}(t\gamma_1-\gamma) = u'2^{\jmath}\frac{t}{2}(t(\jmath)-1)(t\gamma_1-\gamma) = 2^{\jmath-1}u(t(\jmath)-1) (t\gamma_1-\gamma)=0,
$$
and so, since $2\mid \imath$, we have
$$
\left(\frac{u}{2}-1\right)\gamma'_{\imath} = \left(1-\frac{u}{2}\right)\imath(t\gamma_1-\gamma) = \imath(t\gamma_1-\gamma) = \left(\imath+ u'2^{2\jmath}\binom{t(\jmath)}{2}\right)(t\gamma_1-\gamma),
$$
as desired. Assume then that $\jmath = 0$. Hence
$$
u'2^{2\jmath}\binom{t(\jmath)}{2} = u'\binom{t}{2} = \frac{u}{2}(t-1)\equiv  - \frac{u}{2}\pmod{u},
$$
where the last equality holds since $2\mid t$. Since, moreover $\frac{u}{2}(\imath-1)\gamma'_1 = 0$, we have $\frac{u}{2}\gamma'_{\imath} =  \frac{u}{2}\imath\gamma'_1  =  \frac{u}{2}\gamma'_1$, and so
$$
\left(\imath+ u'2^{2\jmath}\binom{t(\jmath)}{2}\right)(t\gamma_1-\gamma) = \left(\imath-\frac{u}{2}\right)(t\gamma_1-\gamma) = -\gamma'_{\imath} + \frac{u}{2}\gamma'_1 = \left(-1+\frac{u}{2}\right)\gamma'_{\imath},
$$
which finishes the proof.
\end{proof}

Let $\mathcal{A}$ be as at the beginning of this Section, let $\Gamma$ be an additive abelian group and let $B$ and $Z$ be the groups of $2$-coboundaries and $2$-cocycles of $\mathcal{X}(\mathcal{A},\Gamma)$, respectively. Recall that $\Gamma_{\!r}\coloneqq \{\gamma\in \Gamma : r\gamma=0\}$, for each natural number $r$.

\smallskip

In the following result we set $z(\gamma_1,\gamma)\coloneqq \left(f_1(\gamma),\sum \imath \gamma_1 g^{\imath},-\sum \imath\gamma_1 g^{\imath} \right)$, where $\gamma,\gamma_1\in \Gamma$.

\begin{theorem}\label{caso t=1} If $u=v$, then
\begin{equation}\label{ec5}
\Ho_N^2(\mathcal{A},\Gamma) = \wh{\Ho}_N^2(\mathcal{A},\Gamma) = \Ho^2(\mathcal{X}(\mathcal{A},\Gamma))\simeq \Gamma_{\!v}\oplus \frac{\Gamma}{v\Gamma}.
\end{equation}
Moreover, the set $\ov{Z}\coloneqq \left\{\left(z(\gamma_1,\gamma)\right): \text{ $\gamma\in\Gamma$ and $\gamma_1\in \Gamma_{\!u}$}\right\}$, is a subgroup of $Z$, that applies surjectively on $\Ho^2(\mathcal{X}(\mathcal{A},\Gamma))$ and $B\cap \ov{Z} = \{(f_1(\gamma),0,0):\gamma\in v\Gamma\}$. Finally the map $\Theta\colon \ov{Z}\to \Gamma \oplus \Gamma_{\!u}$, defined by $\Theta\left(z(\gamma_1,\gamma) \right)\coloneqq (\gamma_1,\gamma)$ is an isomorphism that induces the isomorphism in~\eqref{ec5}.
\end{theorem}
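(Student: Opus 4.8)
The plan is to transfer everything to the cochain complex $\mathcal{X}(\mathcal{A},\Gamma)$. By Remarks~\ref{son iguales2} and~\ref{util para'} we have $\Ho_N^2(\mathcal{A},\Gamma)=\wh{\Ho}_N^2(\mathcal{A},\Gamma)=\Ho^2(\mathcal{X}(\mathcal{A},\Gamma))$, and by Remark~\ref{fundam} the last group equals $Z'/(B\cap Z')$, where $Z'\subseteq Z$ is the subgroup of $2$-cocycles whose $\Gamma(2)^{00}$-component is $f_1(\gamma)$ for some $\gamma\in\Gamma$. So the first step is to identify $Z'$ explicitly in the present case $t=1$ (equivalently $u=v$). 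For this I would specialize the analysis of Remark~\ref{fundam}: by that remark conditions~(1) and~(3) together are equivalent to equality~\eqref{pepitito} together with condition~(4), i.e. $\gamma_{\imath}=\imath\gamma_1$ with $v\gamma_1=0$ and $\gamma'_{\imath}=\imath\gamma'_1$ with $u\gamma'_1=0$; here one uses that Lemma~\ref{Calculo de wh{d}_{mathrm{h}1}^{012}(f_1(gamma))} gives $\wh{d}^{012}_{\mathrm{h}1}(f_1(\gamma))=0$ precisely when $u=v$. For condition~(2), I would observe that when $t=1$ the formula of Theorem~\ref{complejo para} gives $\wh{d}^{\mathrm{h}1}_{021}(g^{\imath})=-g^{\imath}$, hence $\wh{d}^{021}_{\mathrm{h}1}\bigl(\sum\gamma_{\imath}g^{\imath}\bigr)=-\sum\gamma_{\imath}g^{\imath}$, while $\wh{d}^{021}_{\mathrm{h}2}\bigl(\sum\gamma'_{\imath}g^{\imath}\bigr)=-\sum\gamma'_{\imath}g^{\imath}$ by~\eqref{etiq1} (valid since $t\ne 2$); thus condition~(2) reads $\gamma'_{\imath}=-\gamma_{\imath}=-\imath\gamma_1$, and then $u\gamma'_1=-v\gamma_1=0$ is automatic because $u=v$. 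This shows that $Z'=\{z(\gamma_1,\gamma):\gamma\in\Gamma,\ \gamma_1\in\Gamma_{\!u}\}=\ov{Z}$; in particular $\ov{Z}$ is a subgroup of $Z$ and maps onto $\Ho^2(\mathcal{X}(\mathcal{A},\Gamma))$.

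The second step is to compute $B\cap\ov{Z}$. The coboundary operator of $\mathcal{X}(\mathcal{A},\Gamma)$ sends $\psi\in\ov{X}^{01}=\Gamma(1)^{00}$ to the triple $\bigl(\wh{d}^{002}_{\mathrm{v}}(\psi),\,\wh{d}^{011}_{\mathrm{h}1}(\psi),\,0\bigr)$ in $\Gamma(2)^{00}\oplus\Gamma(1)^{01}\oplus\Gamma(1)^{10}$, the last coordinate being zero because $\wh{d}^{\mathrm{h}}_{11}$ vanishes on $\ov{M}(1)_{10}$. When $u=v$ one has $g^{-u\imath}=1$, so $\wh{d}^{\mathrm{h}1}_{011}(g^{\imath})=g^{\imath}(g^{-u\imath}-1)=0$ and the middle coordinate vanishes as well; hence $B=\bigl\{\bigl(\wh{d}^{002}_{\mathrm{v}}(\psi),0,0\bigr):\psi\in\Gamma(1)^{00}\bigr\}$. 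Comparing with the description of $\ov{Z}$, a coboundary lies in $\ov{Z}$ exactly when the corresponding $z(\gamma_1,\gamma)$ has vanishing last two coordinates, i.e. $\gamma_1=0$, and in that case $(f_1(\gamma),0,0)\in B$ iff $f_1(\gamma)\in\ima(\wh{d}^{002}_{\mathrm{v}})$. Since $f_1(\gamma)\in\ker(\wh{d}^{003}_{\mathrm{v}})$, Remark~\ref{rem 5.5}---whose computation identifies the class of $f_1(\gamma)$ in $\ker(\wh{d}^{003}_{\mathrm{v}})/\ima(\wh{d}^{002}_{\mathrm{v}})\simeq\Gamma/v\Gamma$ with $\gamma+v\Gamma$---shows this holds iff $\gamma\in v\Gamma$. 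Therefore $B\cap\ov{Z}=\{(f_1(\gamma),0,0):\gamma\in v\Gamma\}$.

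The last step is to assemble the isomorphism. The map $\Theta$ of the statement is well defined and injective because $\gamma$ is recovered as the coefficient $\gamma_{11}$ of $g\ot g$ in $f_1(\gamma)$ and $\gamma_1$ as the coefficient of $g$ in the second component, it is surjective by the definition of $\ov{Z}$, and it is additive since $f_1$ is $\mathds{Z}$-linear in $\gamma$ and $z(\gamma_1,\gamma)+z(\gamma'_1,\gamma')=z(\gamma_1+\gamma'_1,\gamma+\gamma')$; thus $\Theta$ is a group isomorphism. It carries $B\cap\ov{Z}$ onto the subgroup $\{(0,\gamma):\gamma\in v\Gamma\}$, and hence induces an isomorphism $\Ho^2(\mathcal{X}(\mathcal{A},\Gamma))=\ov{Z}/(B\cap\ov{Z})\simeq(\Gamma_{\!u}\oplus\Gamma)/(\{0\}\oplus v\Gamma)=\Gamma_{\!u}\oplus\Gamma/v\Gamma=\Gamma_{\!v}\oplus\Gamma/v\Gamma$, the last equality because $u=v$; this gives~\eqref{ec5} together with the remaining assertions of the statement. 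The only genuinely delicate point is the first step: verifying that, for $t=1$, conditions~(1)--(3) of Remark~\ref{fundam} collapse to the relations $\gamma_{\imath}=\imath\gamma_1$, $\gamma'_{\imath}=-\imath\gamma_1$, $v\gamma_1=0$. Once the vanishings $\wh{d}^{\mathrm{h}1}_{011}=0$ and $\wh{d}^{012}_{\mathrm{h}1}(f_1(\gamma))=0$ are in place, the remaining computations are routine.
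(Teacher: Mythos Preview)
Your proposal is correct and follows essentially the same route as the paper: identify $\Ho_N^2$ with $\Ho^2(\mathcal{X}(\mathcal{A},\Gamma))$, use Remark~\ref{fundam} to reduce to $Z'$, specialize the three conditions there to $t=1$, and then compute $B\cap\ov{Z}$. The only visible difference is in the last step: the paper solves $\wh{d}^{002}_{\mathrm{v}}(x)=f_1(\gamma)$ explicitly (finding $\gamma=-v\gamma''_1$), whereas you invoke Remark~\ref{rem 5.5} to conclude directly that $f_1(\gamma)\in\ima(\wh{d}^{002}_{\mathrm{v}})$ iff $\gamma\in v\Gamma$; both arrive at $B\cap\ov{Z}=\{(f_1(\gamma),0,0):\gamma\in v\Gamma\}$. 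Your handling of condition~(2) (getting $\gamma'_\imath=-\imath\gamma_1$) is in fact slightly cleaner than the paper's phrasing, and is consistent with the definition of $z(\gamma_1,\gamma)$ stated just before the theorem.
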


\begin{proof} By Remarks~\ref{son iguales2} and~\ref{util para'} we have $\Ho_N^2(\mathcal{A},\Gamma) = \wh{\Ho}_N^2(\mathcal{A},\Gamma) = \Ho^2(\mathcal{X}(\mathcal{A},\Gamma))$. Let $Z'$ be as in Remark~\ref{fundam}. Thus
$$
Z'=\left\{\left(f_1(\gamma),\sum \gamma_{\imath} g^{\imath},\sum \gamma'_{\imath} g^{\imath}\right):\text{$v\gamma_1=u\gamma'_1=0$ and condition~(2) is satisfied}\right\}.
$$
Since $t=1$, by the definition of $\wh{d}_{\mathrm{h}1}^{021}$ and equality~\eqref{etiq1}, condition~(2) holds if and only if $\gamma_1 = \gamma'_1$. Hence $Z' =\ov{Z}$. Clearly the map $\Theta\colon \ov{Z}\to \Gamma_{\!v}\oplus \Gamma$, defined by $\Theta\left(z(\gamma_1,\gamma) \right)\coloneqq (\gamma_1,\gamma)$ is an isomorphism. We now compute
$$
B\cap \ov{Z} = \left\{\left(\wh{d}_{\mathrm{v}}^{002}(x),\wh{d}_{\mathrm{h}1}^{011}(x),0\right):x\in \Gamma(1)^{00}\text{ and } \wh{d}_{\mathrm{v}}^{002}(x) = f_1(\gamma) \text{ for some }\gamma\in\Gamma\right\}.
$$
Write $x = \sum \gamma''_{\imath} g^{\imath}$. By the definition of $\wh{d}_{\mathrm{v}}^{002}$, we have
$$
\wh{d}_{\mathrm{v}}^{002}(x) = \sum_{\jmath = 1}^{v-2} (\gamma''_{\jmath+1}-\gamma''_1-\gamma''_{\jmath}) g\ot g^{\jmath}  - (\gamma''_1+\gamma''_{v-1}) g\ot g^{v-1} + \sum_{\imath = 2}^{v-1}\sum_{\jmath = 1}^{v-1} (\gamma''_{\imath+\jmath}-\gamma''_{\jmath}-\gamma''_{\imath}) g^{\imath}\ot g^{\jmath}.
$$
So, by Remark~\ref{rem 5.4}, we get that $\wh{d}_{\mathrm{v}}^{002}(x) = f_1(\gamma)$ if and only if $\gamma_2''-2\gamma''_1=\gamma$, $\gamma''_{\imath+1}=\gamma''_1+\gamma''_{\imath}$ for $1<\imath <v-1$, and $\gamma''_{v-1}=-\gamma''_1$ (or, equivalently, if and only if $\gamma = -v\gamma''_1$ and $\gamma''_{\imath} = -(v-\imath) \gamma''_1$ for $1< \imath<v$). Moreover, since $u=v$, we have $\wh{d}_{\mathrm{h}1}^{011}(x) = 0$. Consequently $B\cap \ov{Z} = \left\{\left(f_1(-v\gamma''_1),0,0\right): \gamma''_1\in \Gamma\right\}$. Thus the map $\Theta$ induces an isomorphism $\Ho^2(\mathcal{X}(\mathcal{A},\Gamma))\simeq \Gamma_{\!v}\oplus \frac{\Gamma}{v\Gamma}$.
\end{proof}

Assume that we are under the hypothesis of Theorem~\ref{caso t=1}. For each $\gamma\!\in\!\Gamma$ and $\gamma_1\!\in\!\Gamma_v$, let $\xi_{\gamma}^1\colon \ov{M}(2)\to \Gamma$ and $\xi_{\gamma_1}^2\colon \ov{D}\ot \ov{M}(1)\to \Gamma$ be the maps defined by
\begin{equation*}
\xi_{\gamma}^1([g^{\imath_1}\ot g^{\imath_2}])\coloneqq \begin{cases}\phantom{-} \gamma &\text{if $\imath_1=\imath_2=1$,}\\ -\gamma &\text{if $\imath_1,\imath_2\ge 2$ and $\imath_1+\imath_2\le v+1$,}\\ \phantom{-} 0 &\text{otherwise,}\end{cases} \quad\text{and}\quad \xi_{\gamma_1}^2(g^{\imath_1}\ot g^{\imath_2} )\coloneqq \imath_1\imath_2\gamma_1.
\end{equation*}

\begin{proposition}\label{complemento 1} The map $(\xi_{\gamma}^1,\xi_{\gamma_1}^2) \colon \ov{M}(2)\oplus (\ov{D}\ot \ov{M}(1))\to \Gamma$ is a $2$-cocycle of $\wh{C}_N^*(\mathcal{A},\Gamma)$. Moreover, each $2$-cocycle of $\wh{C}_N^*(\mathcal{A},\Gamma)$ is cohomologous to a $(\xi_{\gamma}^1,\xi_{\gamma_1}^2)$ and two $2$-cocycles $(\xi_{\gamma}^1,\xi_{\gamma_1}^2)$ and $(\xi_{\gamma'}^1,\xi_{\gamma'_1}^2)$ are cohomologous if and only if $\gamma'_1 = \gamma_1$ and $v\gamma'=v\gamma$.
\end{proposition}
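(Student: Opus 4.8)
The plan is to deduce the proposition from Theorem~\ref{caso t=1} by transporting it along the homotopy equivalence recorded in Remark~\ref{mascalculos}. There it is stated that $\wh{\varphi}_*\colon\bigl(\wh{C}^N_*(\mathcal{A},\mathds{Z}),\partial_*\bigr)\to\Tot(\mathcal{X}(\mathcal{A}))$ is a homotopy equivalence, so $\Hom(\wh{\varphi}_*,\Gamma)\colon\Hom(\Tot(\mathcal{X}(\mathcal{A})),\Gamma)\to\wh{C}_N^*(\mathcal{A},\Gamma)$ is one too, and in degree $2$ it is the map $\Hom(\wh{\varphi}_2,\Gamma)$ with $\wh{\varphi}_2$ given by the matrix~\eqref{matriz}. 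By Remark~\ref{util para'} the chain groups and differentials of $\Tot(\mathcal{X}(\mathcal{A}))$ in total degree $\le 3$ agree with those of $\mathcal{X}(\mathcal{A})_T$, so $\Ho^2\bigl(\Hom(\Tot(\mathcal{X}(\mathcal{A})),\Gamma)\bigr)=\Ho^2(\mathcal{X}(\mathcal{A},\Gamma))$. Hence $\Hom(\wh{\varphi}_2,\Gamma)$ sends $2$-cocycles to $2$-cocycles, induces an isomorphism on $\Ho^2$, and two of its values are cohomologous in $\wh{C}_N^*(\mathcal{A},\Gamma)$ if and only if their sources are cohomologous in $\mathcal{X}(\mathcal{A},\Gamma)$.

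The key computational step is to identify $\Hom(\wh{\varphi}_2,\Gamma)$ applied to the cocycles $z(\gamma_1,\gamma)=\bigl(f_1(\gamma),\sum\imath\gamma_1g^{\imath},-\sum\imath\gamma_1g^{\imath}\bigr)$ of Theorem~\ref{caso t=1}. Writing out $\Lambda(\gamma,1)_{\imath\jmath}$ from~\eqref{bala1} shows that, as an element of $\Gamma(2)^{00}=\Hom(\ov{M}(2),\Gamma)$, the functional $f_1(\gamma)$ is exactly $[g^{\imath_1}\ot g^{\imath_2}]\mapsto\xi_\gamma^1([g^{\imath_1}\ot g^{\imath_2}])$. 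On the other hand, specializing Proposition~\ref{funciones de arriba} to the present situation $t=1$ (where $u=v$ and the index $\jmath$ with $0\le\jmath<t$ is forced to be $0$) gives $\wh{\varphi}_{11}^{01}(g^{\imath_1}\ot g^{\imath_2})=0$ and $\wh{\varphi}_{11}^{10}(g^{\imath_1}\ot g^{\imath_2})=-\imath_1 g^{\imath_2}$. Plugging these, together with $z(\gamma_1,\gamma)$, into~\eqref{matriz} yields for $(m,g^{\imath_1}\ot g^{\imath_2})\in\ov{M}(2)\oplus(\ov{D}\ot\ov{M}(1))$ the value $f_1(\gamma)(m)=\xi_\gamma^1(m)$ on the first summand and $\bigl(-\sum\imath\gamma_1g^{\imath}\bigr)(-\imath_1 g^{\imath_2})=\imath_1\imath_2\gamma_1=\xi_{\gamma_1}^2(g^{\imath_1}\ot g^{\imath_2})$ on the second; that is, $z(\gamma_1,\gamma)\circ\wh{\varphi}_2=(\xi_\gamma^1,\xi_{\gamma_1}^2)$. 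Since $z(\gamma_1,\gamma)$ belongs to $\ov{Z}\subseteq Z$ and is therefore a $2$-cocycle (Theorem~\ref{caso t=1}), this already establishes the first assertion; the hypothesis $\gamma_1\in\Gamma_v$ is precisely what makes the explicit formula for $\xi_{\gamma_1}^2$ a well-defined cochain.

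The remaining assertions are now formal. If $c$ is any $2$-cocycle of $\wh{C}_N^*(\mathcal{A},\Gamma)$, then since $\Hom(\wh{\varphi}_2,\Gamma)$ is surjective on $\Ho^2$ there is a $2$-cocycle $z$ of $\mathcal{X}(\mathcal{A},\Gamma)$ with $[z\circ\wh{\varphi}_2]=[c]$; by Theorem~\ref{caso t=1} the subgroup $\ov{Z}$ surjects onto $\Ho^2(\mathcal{X}(\mathcal{A},\Gamma))$, so $z$ is cohomologous to some $z(\gamma_1,\gamma)$ with $\gamma\in\Gamma$ and $\gamma_1\in\Gamma_u=\Gamma_v$, and hence $c$ is cohomologous to $z(\gamma_1,\gamma)\circ\wh{\varphi}_2=(\xi_\gamma^1,\xi_{\gamma_1}^2)$. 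Finally, $(\xi_\gamma^1,\xi_{\gamma_1}^2)$ and $(\xi_{\gamma'}^1,\xi_{\gamma'_1}^2)$ are cohomologous in $\wh{C}_N^*(\mathcal{A},\Gamma)$ if and only if $z(\gamma_1,\gamma)$ and $z(\gamma'_1,\gamma')$ represent the same class of $\Ho^2(\mathcal{X}(\mathcal{A},\Gamma))$, if and only if $z(\gamma_1,\gamma)-z(\gamma'_1,\gamma')=z(\gamma_1-\gamma'_1,\gamma-\gamma')$ lies in $B\cap\ov{Z}$ (here one uses that $\ov{Z}$ is a subgroup, that $z$ is additive in its two arguments, and that $\ov{Z}\cap B=B\cap\ov{Z}$). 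By the explicit description $B\cap\ov{Z}=\{(f_1(\delta),0,0):\delta\in v\Gamma\}$ obtained in the proof of Theorem~\ref{caso t=1}, together with the injectivity of $f_1$ and of $\gamma_1\mapsto\sum\imath\gamma_1g^{\imath}$, this holds precisely when $\gamma_1=\gamma'_1$ and $\gamma-\gamma'\in v\Gamma$.

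I do not expect a genuine obstacle here: the real work has already been carried out in the perturbation-lemma reduction (Theorems~\ref{complejo para} and~\ref{caso t=1} and Remark~\ref{fundam}), and what is left is a transport of structure along $\Hom(\wh{\varphi}_*,\Gamma)$. The point that needs the most care is the identification in the second paragraph --- verifying that $f_1(\gamma)$, written through~\eqref{bala1}, is literally the functional $\xi_\gamma^1$, and that the $t=1$ specializations of $\wh{\varphi}_{11}^{01}$ and $\wh{\varphi}_{11}^{10}$ from Proposition~\ref{funciones de arriba} are as claimed --- together with keeping track of which tensor slot each $g^{\imath}$ occupies in $\ov{D}\ot\ov{M}(1)$.
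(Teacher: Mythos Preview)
Your approach is essentially identical to the paper's: transport Theorem~\ref{caso t=1} along the homotopy equivalence of Remark~\ref{mascalculos} by verifying $(\xi_\gamma^1,\xi_{\gamma_1}^2)=z(\gamma_1,\gamma)\circ\wh{\varphi}_2$, which you carry out via equality~\eqref{bala1} and the $t=1$ specialization of Proposition~\ref{funciones de arriba}. The paper's proof says exactly this in two sentences; you have simply unpacked the details.

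One point worth flagging: the condition you arrive at, $\gamma-\gamma'\in v\Gamma$, is precisely what $B\cap\ov{Z}=\{(f_1(\delta),0,0):\delta\in v\Gamma\}$ from Theorem~\ref{caso t=1} yields, and it is the condition compatible with the isomorphism $\Ho^2(\mathcal{X}(\mathcal{A},\Gamma))\simeq\Gamma_{\!v}\oplus\Gamma/v\Gamma$ (the $\Gamma/v\Gamma$ summand records~$\gamma$). The phrasing ``$v\gamma'=v\gamma$'' in the statement would instead mean $\gamma-\gamma'\in\Gamma_{\!v}$, which is a different subgroup in general; your derivation gives the former, and that is what the argument actually proves.
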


\begin{proof} By Remark~\ref{mascalculos}, Theorem~\ref{caso t=1} and the discussion above Remark~\ref{pero}, it suffices to check that
$$
\bigl(\xi_{\gamma}^1,\xi_{\gamma_1}^2\bigr) = z(\gamma_1,\gamma)[\wh{\varphi}_2],
$$
where $z(\gamma_1,\gamma)$ is as in the statement of Theorem~\ref{caso t=1} and $[\wh{\varphi}_2]$ is as in~\eqref{matriz}. But this follows by that theorem, equality~\eqref{bala1} and Proposition~\ref{funciones de arriba} with $t=1$.
\end{proof}

\begin{proof}[Proof of Theorem~A] This follows from Remark~\ref{son iguales2}, Proposition~\ref{complemento 1} and \cite{LV}*{Theorem~5.8}.
\end{proof}

In the following result for each $\gamma,\gamma_1\in \Gamma$, we set $z(\gamma_1,\gamma)\coloneqq \left(f_1(\gamma),\sum \gamma_{\imath} g^{\imath},-\sum \imath (t\gamma_1-\gamma) g^{\imath} \right)$, where the $\gamma_i$'s with $i\ge 2$ are as in~\eqref{maron'} if $v<u^2$, and the $\gamma_i$'s with $i\ge 2$ are as in~\eqref{maron} if $v=u^2$.

\begin{theorem}\label{caso 1<t<u} If $2<u<v\le u^2$, then
\begin{equation}\label{ec3}
\Ho_N^2(\mathcal{A},\Gamma) = \wh{\Ho}_N^2(\mathcal{A},\Gamma) = \Ho^2(\mathcal{X}(\mathcal{A},\Gamma))\simeq \frac{\Gamma}{u\Gamma}\oplus \Gamma_{\!u}.
\end{equation}
Moreover the set $\ov{Z}\coloneqq \left\{z(\gamma_1,\gamma):v\gamma_1=u\gamma\right\}$ is a subgroup of $Z$ that applies surjectively on $\Ho^2(\mathcal{X}(\mathcal{A},\Gamma))$ and $B\cap \ov{Z} = \{z(u\gamma,v\gamma):\gamma\in \Gamma\}$. Finally the map $\Theta\colon \ov{Z}\to \Gamma \oplus \Gamma_{\!u}$, defined by $\Theta(z(\gamma_1,\gamma)) \coloneqq (\gamma_1,t\gamma_1-\gamma)$ is an isomorphism that induces the isomorphism in~\eqref{ec3}.
\end{theorem}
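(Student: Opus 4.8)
The plan is to mirror the proof of Theorem~\ref{caso t=1}, replacing the elementary input used there by the sharper Lemmas~\ref{lema previo} and~\ref{lema previo1}. By Remarks~\ref{son iguales2} and~\ref{util para'} we are reduced to computing $\Ho^2(\mathcal{X}(\mathcal{A},\Gamma))$, and by Remark~\ref{fundam} this group equals $Z'/(B\cap Z')$, where $Z'$ consists of the $2$-cocycles whose $\Gamma(2)^{00}$-component is of the form $f_1(\gamma)$. The first task is to identify $Z'$ with $\ov{Z}$. By Remark~\ref{fundam}, a triple $\bigl(f_1(\gamma),\sum\gamma_\imath g^\imath,\sum\gamma'_\imath g^\imath\bigr)$ belongs to $Z'$ precisely when equality~\eqref{pepitito} holds, condition~(2) holds, and condition~(5) or~(6) holds (condition~(5) when $v=u^2$, condition~(6) when $v<u^2$); and conditions~(5) and~(6) say exactly that the $\gamma_\imath$ with $\imath\ge 2$ are given by~\eqref{maron} respectively~\eqref{maron'}, together with $v\gamma_1=u\gamma$. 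Since $u>2$, the hypotheses of Lemma~\ref{lema previo1} hold (via Lemma~\ref{lema previo}), so equality~\eqref{pepitito} together with condition~(2) is equivalent to $\gamma'_\imath=-\imath(t\gamma_1-\gamma)$ for all $\imath$. Combining these facts yields $Z'=\ov{Z}$.

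Next I would check that $z\colon\{(\gamma_1,\gamma)\in\Gamma\times\Gamma:v\gamma_1=u\gamma\}\to\ov{Z}$ is a group isomorphism: it is additive because $f_1$, the expressions in~\eqref{maron} and~\eqref{maron'}, and $-\sum\imath(t\gamma_1-\gamma)g^\imath$ are all $\mathds{Z}$-linear in $(\gamma_1,\gamma)$; it is onto by the definition of $\ov{Z}$; and it is injective because $\gamma$ is read off from the component $f_1(\gamma)$ and then $\gamma_1$ is recovered from suitable entries of the middle component (say from $\gamma_3-\gamma_2$, noting that $t>1$ and that if $t=2$ then $v<u^2$, so $u'>1$ and the relevant lines of~\eqref{maron'} apply). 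In particular $\ov{Z}$ is a subgroup of $Z$, and, since $Z'=\ov{Z}$, Remark~\ref{fundam} shows it maps onto $\Ho^2(\mathcal{X}(\mathcal{A},\Gamma))$. Composing $z^{-1}$ with the map $(\gamma_1,\gamma)\mapsto(\gamma_1,t\gamma_1-\gamma)$ produces $\Theta$; it takes values in $\Gamma\oplus\Gamma_{\!u}$ because $u(t\gamma_1-\gamma)=v\gamma_1-u\gamma=0$, it is visibly injective, and it is surjective since for $(a,b)\in\Gamma\oplus\Gamma_{\!u}$ the pair $(a,ta-b)$ satisfies $va-u(ta-b)=ub=0$ (using $ut=v$) and is sent to $(a,b)$. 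Hence $\Theta$ is an isomorphism.

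It remains to compute $B\cap\ov{Z}$. A $2$-coboundary has the form $\bigl(\wh{d}_{\mathrm{v}}^{002}(x),\wh{d}_{\mathrm{h}1}^{011}(x),0\bigr)$ with $x=\sum\gamma''_\imath g^\imath\in\Gamma(1)^{00}$, and it lies in $\ov{Z}=Z'$ iff $\wh{d}_{\mathrm{v}}^{002}(x)=f_1(\gamma)$ for some $\gamma$. Exactly as in the proof of Theorem~\ref{caso t=1}, using Remarks~\ref{rem 5.4} and~\ref{rem 5.5}, this holds iff $\gamma''_\imath=-(v-\imath)\gamma''_1$ for $1<\imath<v$, in which case $\gamma=-v\gamma''_1$. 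For such an $x$ the coboundary equals $z(\gamma_1,-v\gamma''_1)$ for a unique $\gamma_1$ (with $t\gamma_1=-v\gamma''_1$ forced by the vanishing $\Gamma(1)^{10}$-component), and one pins $\gamma_1$ down by evaluating $\wh{d}_{\mathrm{h}1}^{011}(x)(g^\imath)=\gamma''_{(1-u)\imath}-\gamma''_\imath$ at small $\imath$ and comparing with the values of $\gamma_2,\gamma_3,\dots$ prescribed by~\eqref{maron} or~\eqref{maron'}; a short computation using the $v$-periodicity of $(\gamma''_\imath)_\imath$ gives $\gamma_1=-u\gamma''_1$. Writing $\gamma:=-\gamma''_1$, this shows $B\cap\ov{Z}\subseteq\{z(u\gamma,v\gamma):\gamma\in\Gamma\}$, and the reverse inclusion follows by running the same construction backwards; so $B\cap\ov{Z}=\{z(u\gamma,v\gamma):\gamma\in\Gamma\}$. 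Finally $\Theta(z(u\gamma,v\gamma))=(u\gamma,tu\gamma-v\gamma)=(u\gamma,0)$, so $\Theta(B\cap\ov{Z})=u\Gamma\oplus 0$, and therefore $\Ho^2(\mathcal{X}(\mathcal{A},\Gamma))\cong(\Gamma\oplus\Gamma_{\!u})/(u\Gamma\oplus 0)\cong\frac{\Gamma}{u\Gamma}\oplus\Gamma_{\!u}$, which is~\eqref{ec3}, with $\Theta$ inducing this isomorphism.

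The main obstacle is the computation in the last paragraph: evaluating $\wh{d}_{\mathrm{h}1}^{011}$ on the distinguished cochains $x$ and matching the outcome against the many-branched formulas~\eqref{maron} and~\eqref{maron'} (keeping track of the cases $v<u^2$ versus $v=u^2$, and of the degenerate small-$t$ behaviour). Everything else is bookkeeping layered on Remark~\ref{fundam} and Lemma~\ref{lema previo1}.
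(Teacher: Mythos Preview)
Your proposal is correct and follows essentially the same route as the paper: reduce via Remarks~\ref{son iguales2} and~\ref{util para'}, identify $Z'$ using Remark~\ref{fundam}, invoke Lemma~\ref{lema previo1} to get $Z'=\ov{Z}$, check that $\Theta$ is an isomorphism, and compute $B\cap\ov{Z}$ by determining which $x$ satisfy $\wh{d}_{\mathrm{v}}^{002}(x)=f_1(\gamma)$.

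One remark: the step you flag as ``the main obstacle''---matching $\wh{d}_{\mathrm{h}1}^{011}(x)$ against all the branches of~\eqref{maron} or~\eqref{maron'}---is actually automatic, and the paper exploits this. Once you know the coboundary lies in $\ov{Z}$ (it does, being a $2$-cocycle whose $\Gamma(2)^{00}$-component is $f_1(\gamma)$) and that $z$ is injective (which you established), it suffices to read off $\gamma_1$ from a single coordinate of the middle component; the paper just computes the $g$-coefficient of $\wh{d}_{\mathrm{h}1}^{011}(x)$ to be $-u\gamma''_1$ and stops. The remaining coordinates are forced to match by the bijectivity of $z$, so no case analysis against~\eqref{maron}/\eqref{maron'} is needed.
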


\begin{proof} Assume first that $v<u^2$. By Remarks~\ref{son iguales2} and~\ref{util para'} we have $\Ho_N^2(\mathcal{A},\Gamma) = \wh{\Ho}_N^2(\mathcal{A},\Gamma) = \Ho^2(\mathcal{X}(\mathcal{A},\Gamma))$. Let $Z'$ be as in Remark~\ref{fundam}. We have
$$
Z'=\left\{\left(f_1(\gamma),\sum \gamma_{\imath} g^{\imath},\sum \gamma'_{\imath} g^{\imath}\right):\text{$\gamma'_{\imath}\!=\!\imath\gamma'_1$, $u\gamma'_1\!=\!0$, $v\gamma_1\!=\!u\gamma$ and condition~(2) and equality~\eqref{maron'} hold}\right\}.
$$
By Lemma~\ref{lema previo1} we have $Z'=\ov{Z}$. Clearly, the map $\Theta\colon \ov{Z}\to \Gamma\oplus \Gamma_{\!u}$, given by $\Theta(z(\gamma_1,\gamma)) \coloneqq (\gamma_1,t\gamma_1-\gamma)$,
is an isomorphism. We now com\-pute
$$
B\cap \ov{Z} = \left\{\left(\wh{d}_{\mathrm{v}}^{002}(x),\wh{d}_{\mathrm{h}1}^{011}(x),0\right):x\in \Gamma(1)^{00}\text{ and } \wh{d}_{\mathrm{v}}^{002}(x) = f_1(\gamma) \text{ for some }\gamma\in\Gamma\right\}.
$$
Write $x = \sum \gamma''_{\imath} g^{\imath}$. By the definition of $\wh{d}_{\mathrm{v}}^{002}$, we have
$$
\wh{d}_{\mathrm{v}}^{002}(x) = \sum_{\jmath=1}^{v-2} (\gamma''_{\jmath+1}-\gamma''_1-\gamma''_{\jmath}) g\ot g^{\jmath} - (\gamma''_1+\gamma''_{v-1}) g\ot g^{v-1} + \sum_{\imath = 2}^{v-1}\sum_{\jmath=1}^{v-1}(\gamma''_{\imath+\jmath}- \gamma''_{\jmath}- \gamma''_{\imath}) g^{\imath}\ot g^{\jmath}.
$$
So, by Remark~\ref{rem 5.4}, we get that $\wh{d}_{\mathrm{v}}^{002}(x) = f_1(\gamma)$ if and only if $\gamma_2''-2\gamma''_1=\gamma$, $\gamma''_{\imath+1}=\gamma''_1+\gamma''_{\imath}$ for $1<\imath < v-1$, and $\gamma''_{v-1}=-\gamma''_1$ (or, equivalently, if and only if $\gamma = -v\gamma''_1$ and $\gamma''_{\imath} = (\imath-v)\gamma''_1$ for $1<\imath<v$). Hence,
$$
\wh{d}_{\mathrm{h}1}^{011}(x) = \wh{d}_{\mathrm{h}1}^{011}\Bigl(\sum \gamma''_{\imath} g^{\imath}\Bigr) = \sum \bigl(\gamma''_{(1-u)\imath}-\gamma''_{\imath}\bigr)g^{\imath} = -u\gamma''_1 g + \sum_{\imath\ge 2} \bigl(\gamma''_{(1-u)\imath}-\gamma''_{\imath}\bigr)g^{\imath}.
$$
Thus, $B\cap \ov{Z} = \{z(u\gamma,v\gamma):\gamma\in \Gamma\}$, and so the map $\Theta$ induces an isomorphism $\Ho^2(\mathcal{X}(\mathcal{A},\Gamma)) \simeq \frac{\Gamma}{u\Gamma}\oplus \Gamma_{\!u}$.

\smallskip

The case $v=u^2$ follows in the same way. The unique difference is that, in the characterization of $Z'$ we must use equality~\eqref{maron} instead of~\eqref{maron'}.
\end{proof}

Assume that we are under the hypothesis of Theorem~\ref{caso 1<t<u}. For each $\gamma,\gamma_1\in\Gamma$ such that $v\gamma_1=u\gamma$, let $\xi_{\gamma}^1\colon \ov{M}(2)\to \Gamma$ and $\xi_{\gamma_1,\gamma}^2\colon \ov{D}\ot \ov{M}(1)\to \Gamma$ be the maps defined by
\begin{align*}
& \xi_{\gamma}^1([g^{\imath_1}\ot g^{\imath_2}])\coloneqq \begin{cases} \phantom{-}\gamma &\text{if $\imath_1=\imath_2=1$,}\\ -\gamma &\text{if $\imath_1,\imath_2\ge 2$ and $\imath_1+\imath_2\le v+1$,}\\ \phantom{-} 0 &\text{otherwise,}\end{cases}
\shortintertext{and}
& \xi_{\gamma_1,\gamma}^2(g^{t\imath+\jmath}\ot g^{\imath_1})\coloneqq \imath_1\left(\imath  - u' \binom{j}{2}\right)(t\gamma_1-\gamma) + \sum_{l=0}^{\jmath-1}\gamma_{\imath_1-ul\imath_1},
\end{align*}
where $0\le \imath<u$, $0\le \jmath< t$ and the $\gamma_r$'s are as in equality~\eqref{maron'} if $v=u^2$ and they are in equality~\eqref{maron'} if $v<u^2$ (take into account that if $r<0$ or $r\ge v$, then to apply equalities~\eqref{maron} and~\eqref{maron'}, in order to compute explicitly the map $\gamma_r$ in function of $\gamma_1$ and $\gamma$, it is necessary to replace $r$ by the remainder of the integer division of $r$ by $v$).

\begin{proposition}\label{complemento 2} The map $(\xi_{\gamma}^1,\xi_{\gamma_1,\gamma}^2) \colon \ov{M}(2)\oplus (\ov{D}\ot \ov{M}(1))\to \Gamma$ is a $2$-cocycle of $\wh{C}^N_*(\mathcal{A},\Gamma)$. Moreover each $2$-cocycle of $\wh{C}^N_*(\mathcal{A},\Gamma)$ is cohomologous to a $(\xi_{\gamma}^1, \xi_{\gamma_1,\gamma}^2)$ and two $2$-cocycles $(\xi_{\gamma}^1,\xi_{\gamma_1,\gamma}^2)$ and $(\xi_{\gamma'}^1, \xi_{\gamma'_1,\gamma'}^2)$ are cohomologous if and only if $\gamma_1-\gamma'_1\in u \Gamma$ and $t(\gamma_1-\gamma'_1)=\gamma-\gamma'$.
\end{proposition}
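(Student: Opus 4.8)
The plan is to follow the pattern of the proof of Proposition~\ref{complemento 1}: I would show that $(\xi_{\gamma}^1,\xi_{\gamma_1,\gamma}^2)$ is obtained by applying $\Hom(\wh{\varphi}_2,\Gamma)$ (with $[\wh{\varphi}_2]$ as in~\eqref{matriz}) to the $2$-cocycle $z(\gamma_1,\gamma)$ of $\mathcal{X}(\mathcal{A},\Gamma)$ defined just before Theorem~\ref{caso 1<t<u}, and then read off the three assertions from that theorem together with Remarks~\ref{son iguales2} and~\ref{mascalculos} and the discussion above Remark~\ref{pero}. Since $v\gamma_1=u\gamma$, the triple $z(\gamma_1,\gamma)$ lies in the subgroup $\ov{Z}$ of Theorem~\ref{caso 1<t<u}, hence is a $2$-cocycle of $\mathcal{X}(\mathcal{A},\Gamma)$; as $\wh{\varphi}_*$ is a homotopy equivalence, its $\Hom(-,\Gamma)$-dual sends $2$-cocycles to $2$-cocycles and induces an isomorphism on $\Ho^2$.

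The central step is to check the equality
$$
\bigl(\xi_{\gamma}^1,\xi_{\gamma_1,\gamma}^2\bigr) = z(\gamma_1,\gamma)\,[\wh{\varphi}_2].
$$
On the $\ov{M}(2)$-component the matrix $[\wh{\varphi}_2]$ acts by the identity, so this component is $f_1(\gamma)([g^{\imath_1}\ot g^{\imath_2}]) = \Lambda(\gamma,1)_{\imath_1\imath_2}$, which by~\eqref{bala1} equals $\xi^1_{\gamma}([g^{\imath_1}\ot g^{\imath_2}])$. On the $\ov{D}\ot\ov{M}(1)$-component I would substitute the closed formulas of Proposition~\ref{funciones de arriba} for $\wh{\varphi}_{11}^{01}$ and $\wh{\varphi}_{11}^{10}$ into the pairing with the entries $\sum\gamma_{\imath}g^{\imath}$ and $-\sum\imath(t\gamma_1-\gamma)g^{\imath}$ of $z(\gamma_1,\gamma)$, obtaining
$$
\sum_{l=0}^{\jmath-1}\gamma_{\imath_1-ul\imath_1}+\imath\imath_1(t\gamma_1-\gamma) - u'\imath_1\sum_{l=1}^{\jmath-1}(\jmath-l-t)(1-ul)(t\gamma_1-\gamma).
$$
The last sum is then to be simplified using the two relations available in the present situation: $u(t\gamma_1-\gamma)=v\gamma_1-u\gamma=0$, which annihilates every summand carrying the extra factor $u$, and $u't=u$, which makes $u'(\jmath-1)t(t\gamma_1-\gamma)$ vanish as well; since $\sum_{l=1}^{\jmath-1}(\jmath-l)=\binom{\jmath}{2}$, the sum collapses to $u'\imath_1\binom{\jmath}{2}(t\gamma_1-\gamma)$ and the whole expression becomes $\imath_1\bigl(\imath-u'\binom{\jmath}{2}\bigr)(t\gamma_1-\gamma)+\sum_{l=0}^{\jmath-1}\gamma_{\imath_1-ul\imath_1} = \xi^2_{\gamma_1,\gamma}(g^{t\imath+\jmath}\ot g^{\imath_1})$, as needed. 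This also shows $(\xi_{\gamma}^1,\xi_{\gamma_1,\gamma}^2)$ is a $2$-cocycle of $\wh{C}_N^*(\mathcal{A},\Gamma)$.

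Granting this identity, the rest is formal. Since $\wh{\varphi}_*$ is a homotopy equivalence (Remark~\ref{mascalculos}) and, by Theorem~\ref{caso 1<t<u}, the $z(\gamma_1,\gamma)$ with $v\gamma_1=u\gamma$ represent every class of $\Ho^2(\mathcal{X}(\mathcal{A},\Gamma))$, every $2$-cocycle of $\wh{C}_N^*(\mathcal{A},\Gamma)$ is cohomologous to some $(\xi_{\gamma}^1,\xi_{\gamma_1,\gamma}^2)$. For the last statement, I would use that the assignment $(\gamma_1,\gamma)\mapsto z(\gamma_1,\gamma)$ is additive in $(\gamma_1,\gamma)$ (immediate from~\eqref{bala1}, \eqref{maron} and~\eqref{maron'}), so $z(\gamma_1,\gamma)-z(\gamma'_1,\gamma') = z(\gamma_1-\gamma'_1,\gamma-\gamma')$, which again lies in $\ov{Z}$ because $v(\gamma_1-\gamma'_1)=u(\gamma-\gamma')$; hence $(\xi_{\gamma}^1,\xi_{\gamma_1,\gamma}^2)$ and $(\xi_{\gamma'}^1,\xi_{\gamma'_1,\gamma'}^2)$ are cohomologous in $\wh{C}_N^*(\mathcal{A},\Gamma)$ if and only if $z(\gamma_1-\gamma'_1,\gamma-\gamma')$ is a $2$-coboundary of $\mathcal{X}(\mathcal{A},\Gamma)$. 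By Theorem~\ref{caso 1<t<u} the isomorphism $\Theta$ maps $B\cap\ov{Z}$ onto the subgroup $u\Gamma\oplus 0$ of $\Gamma\oplus\Gamma_{\!u}$ (using $ut=v$, so that $\Theta(z(u\gamma'',v\gamma'')) = (u\gamma'',0)$), and $\Theta(z(\gamma_1-\gamma'_1,\gamma-\gamma')) = \bigl(\gamma_1-\gamma'_1,\,t(\gamma_1-\gamma'_1)-(\gamma-\gamma')\bigr)$, so the coboundary condition translates exactly into $\gamma_1-\gamma'_1\in u\Gamma$ and $t(\gamma_1-\gamma'_1)=\gamma-\gamma'$. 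I expect the displayed identity of the second paragraph — in particular the bookkeeping of the binomial sums and the precise way the relations $u(t\gamma_1-\gamma)=0$ and $u't=u$ reduce the $\wh{\varphi}_{11}^{10}$-sum to $\binom{\jmath}{2}$ — to be the only real difficulty; everything else is transport of structure along a homotopy equivalence.
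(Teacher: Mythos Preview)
Your proposal is correct and follows exactly the approach of the paper: reduce everything to the identity $(\xi_{\gamma}^1,\xi_{\gamma_1,\gamma}^2)=z(\gamma_1,\gamma)[\wh{\varphi}_2]$, verify it using equality~\eqref{bala1}, Proposition~\ref{funciones de arriba}, and the relation $u(t\gamma_1-\gamma)=0$, and then read off the remaining assertions from Theorem~\ref{caso 1<t<u} and Remark~\ref{mascalculos}. The paper states this in a single sentence; you have simply made the binomial bookkeeping and the transport-of-structure argument explicit, including the linearity of $z$ and the identification $\Theta(B\cap\ov{Z})=u\Gamma\oplus 0$, none of which the paper spells out.
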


\begin{proof} By Remark~\ref{mascalculos} and Theorem~\ref{caso 1<t<u} it suffices to check that $(\xi_{\gamma}^1,\xi_{\gamma_1,\gamma}^2) = z(\gamma_1,\gamma) [\wh{\varphi}_2]$, where $z(\gamma_1,\gamma)$ is as in the statement of Theorem~\ref{caso 1<t<u} and $[\wh{\varphi}_2]$ is as in~\eqref{matriz}. But this follows by that theorem, equality~\eqref{bala1}, the fact that $u(t\gamma_1-\gamma)=0$, and Proposition~\ref{funciones de arriba}.
\end{proof}

\begin{proof}[Proof of Theorem~B] This follows from Remark~\ref{son iguales2}, Proposition~\ref{complemento 2} and \cite{LV}*{Theorem~5.8}.
\end{proof}

In the following result we set $z(\gamma_1,\gamma'_1,\gamma)\coloneqq (f_1(\gamma),\sum \gamma_{\imath} g^{\imath},\sum \gamma'_{\imath}g^{\imath})$, where $\gamma,\gamma_1,\gamma'_1\in \Gamma$, the $\gamma_i$'s with $i\ge 2$ are as in~\eqref{maron} and $\gamma'_{\imath}=\imath \gamma'_1$ for $\imath\ge 2$.

\begin{theorem}\label{caso t=u=p} If $v=u^2=4$, then
\begin{equation}\label{ec4}
\Ho_N^2(\mathcal{A},\Gamma) = \wh{\Ho}_N^2(\mathcal{A},\Gamma) = \Ho^2(\mathcal{X}(\mathcal{A},\Gamma))\simeq \frac{\Gamma}{2\Gamma} \oplus \Gamma_{\!2}\oplus\Gamma_{\!2}.
\end{equation}
Moreover, the subgroup $\ov{Z} \coloneqq \left\{z(\gamma_1,\gamma'_1,\gamma): \text{$2\gamma'_1 = 0$ and $4\gamma_1= 2\gamma$} \right\}$ of $Z$ that surjectively on $\Ho^2(\mathcal{X}(\mathcal{A},\Gamma))$, $\ov{Z}\cap B = \{z(2\gamma,0,4\gamma):\gamma\in \Gamma\}$ and the map $\Theta\colon \ov{Z}\to \Gamma \oplus \Gamma_{\!2} \oplus \Gamma_{\!2}$, defined by $\Theta(z(\gamma_1,\gamma'_1,\gamma))\coloneqq (\gamma_1,2\gamma_1+\gamma,\gamma'_1)$, is an isomorphism that induces the isomorphism in~\eqref{ec4}.
\end{theorem}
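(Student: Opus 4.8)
The plan is to run exactly the argument used in the proofs of Theorems~\ref{caso t=1} and~\ref{caso 1<t<u}, specialised to the present numerical data $p=2$, $\nu=1$, $\eta=2$, so that $u=t=2$, $v=4$ and $u'=1$. By Remarks~\ref{son iguales2} and~\ref{util para'} we have $\Ho_N^2(\mathcal{A},\Gamma)=\wh{\Ho}_N^2(\mathcal{A},\Gamma)=\Ho^2(\mathcal{X}(\mathcal{A},\Gamma))$, and by Remark~\ref{rem 5.5} the latter equals $Z'/(B\cap Z')$, where $Z'$ is the subgroup from Remark~\ref{fundam} of those $2$-cocycles whose $\ov{M}(2)$-component is an $f_1(\gamma)$. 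So the first task is to unwind conditions~(1), (2) and~(3) of Remark~\ref{fundam}. Since $u'=1$ we are in the case of condition~(5): condition~(3) amounts to equality~\eqref{maron} (which for $t=2$ reads $\gamma_2=2\gamma_1-\gamma$, $\gamma_3=3\gamma_1-2\gamma$) together with $\gamma_v=0$, i.e.\ $4\gamma_1=2\gamma$; and condition~(1) is~\eqref{pepitito}, i.e.\ $\gamma'_\imath=\imath\gamma'_1$ with $2\gamma'_1=0$.

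The decisive — and genuinely new — point is that condition~(2) is now \emph{automatic}. Indeed, by~\eqref{etiq1} with $t=2$ we get $\wh{d}^{021}_{\mathrm{h}2}\bigl(\sum\gamma'_\imath g^\imath\bigr)=\tfrac{u-2}{2}\sum\gamma'_\imath g^\imath=0$ because $u=2$; and Lemma~\ref{lema previo} does apply here (its hypothesis ``$1<u<v=u^2$ and condition~(5)'' is met), so it provides the explicit formula for $\wh{d}^{021}_{\mathrm{h}1}\bigl(\sum\gamma_\imath g^\imath\bigr)$, in which — since $\eta-\nu=1$ — every coefficient is an integer multiple of $2(t\gamma_1-\gamma)=4\gamma_1-2\gamma=0$; hence $\wh{d}^{021}_{\mathrm{h}1}\bigl(\sum\gamma_\imath g^\imath\bigr)=0$ as well, and~(2) holds for every admissible triple. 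Therefore $Z'=\ov{Z}$. Note the contrast with Theorem~\ref{caso 1<t<u}: there Lemma~\ref{lema previo1} (available only for $u>2$) forced $\gamma'_1=-(t\gamma_1-\gamma)$, collapsing one coordinate, whereas here $\gamma'_1$ survives as a free parameter constrained only by $2\gamma'_1=0$, and this is precisely the origin of the extra $\Gamma_{\!2}$ summand.

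It remains to compute $B\cap\ov{Z}$ and to verify that $\Theta$ induces~\eqref{ec4}. As in the earlier proofs, a $2$-coboundary has the form $d^1\bigl(\sum\gamma''_\imath g^\imath\bigr)=\bigl(\wh{d}^{002}_{\mathrm{v}}(\sum\gamma''_\imath g^\imath),\,\wh{d}^{011}_{\mathrm{h}1}(\sum\gamma''_\imath g^\imath),\,0\bigr)$; requiring this to lie in $\ov{Z}$ forces its last component $\sum\imath\gamma'_1g^\imath$ to vanish, i.e.\ $\gamma'_1=0$, and by Remark~\ref{rem 5.4} the equation $\wh{d}^{002}_{\mathrm{v}}(\sum\gamma''_\imath g^\imath)=f_1(\gamma)$ forces $\gamma''_\imath=(\imath-v)\gamma''_1$ and $\gamma=-v\gamma''_1$, whence the $g^1$-coefficient of $\wh{d}^{011}_{\mathrm{h}1}(\sum\gamma''_\imath g^\imath)$ equals $-u\gamma''_1$; reparametrising via $\mu:=-\gamma''_1$ yields $B\cap\ov{Z}=\{z(2\mu,0,4\mu):\mu\in\Gamma\}$. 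Finally, a direct check from the explicit description of $\ov{Z}$ shows that $\Theta$ is a well-defined isomorphism (the defining relations $2\gamma'_1=0$ and $4\gamma_1=2\gamma$ exactly account for the two torsion conditions in the target) and that $\Theta(B\cap\ov{Z})=2\Gamma\oplus0\oplus0$, so $\Theta$ descends to the isomorphism $\Ho^2(\mathcal{X}(\mathcal{A},\Gamma))\simeq\Gamma/2\Gamma\oplus\Gamma_{\!2}\oplus\Gamma_{\!2}$ of~\eqref{ec4}.

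The main obstacle is precisely the second step: establishing that condition~(2) imposes nothing in this case. This is where the proof really departs from Theorem~\ref{caso 1<t<u}, and it hinges on the two small-parameter coincidences $\tfrac{u-2}{2}=0$ and $2(t\gamma_1-\gamma)=0$, which between them annihilate both $\wh{d}^{021}_{\mathrm{h}2}$ on the $\Gamma(1)^{10}$-part and $\wh{d}^{021}_{\mathrm{h}1}$ on the relevant $f_1(\gamma)$-data. Everything else is the by-now routine bookkeeping with the generators $f_b(\gamma)$, Remark~\ref{rem 5.4}, and the differentials of $\mathcal{D}(\mathcal{A})_T$ already carried out for Theorems~\ref{caso t=1} and~\ref{caso 1<t<u}.
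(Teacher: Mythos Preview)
Your proposal is correct and follows essentially the same route as the paper's proof: both invoke Remarks~\ref{son iguales2}, \ref{util para'} and~\ref{fundam} to reduce to computing $Z'/(B\cap Z')$, both establish that condition~(2) of Remark~\ref{fundam} is vacuous here by combining equality~\eqref{etiq1} (which kills $\wh{d}^{021}_{\mathrm{h}2}$ since $\tfrac{u-2}{2}=0$) with Lemma~\ref{lema previo} (whose formula~\eqref{ccc111'} shows every coefficient of $\wh{d}^{021}_{\mathrm{h}1}(\sum\gamma_\imath g^\imath)$ is a multiple of $2(2\gamma_1-\gamma)=0$), and both compute $B\cap\ov{Z}$ by the same $\wh{d}^{002}_{\mathrm{v}}(x)=f_1(\gamma)$ analysis carried out in Theorems~\ref{caso t=1} and~\ref{caso 1<t<u}. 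Your added commentary on why the free parameter $\gamma'_1$ survives (contrasting with Lemma~\ref{lema previo1}, unavailable for $u=2$) is a nice expository touch but not a different argument.
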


\begin{proof} By Remarks~\ref{son iguales2} and~\ref{util para'} we have $\Ho_N^2(\mathcal{A},\Gamma) = \wh{\Ho}_N^2(\mathcal{A},\Gamma) = \Ho^2(\mathcal{X}(\mathcal{A},\Gamma))$. Let $Z'$ be as in Remark~\ref{fundam}. Then
$$
Z'=\left\{\left(f_1(\gamma),\sum \gamma_{\imath} g^{\imath},\sum \gamma'_{\imath} g^{\imath}\right):\text{$\gamma'_{\imath}\!=\!\imath\gamma'_1$, $u\gamma'_1\!=\!0$, $v\gamma_1\!=\!u\gamma$ and condition~(2) and equality~\eqref{maron} hold}\right\}.
$$
By the fact that $4\gamma_1-2\gamma = 0$ and equalities~\eqref{etiq1} and~\eqref{ccc111'}, we have
$$
\wh{d}_{\mathrm{h}1}^{021}\Bigl(\sum \gamma_{\jmath} g^{\jmath}\Bigr) = -2(2\gamma_1-\gamma) g - 2(2\gamma_1-\gamma) g^2 - 4(2\gamma_1-\gamma) g^3 =0 = \wh{d}_{\mathrm{h}2}^{021}\Bigl(\sum \gamma'_{\jmath} g^{\jmath}\Bigr),
$$
which shows in particular that condition~(2) is fulfilled. Hence $Z' = \ov{Z}$. Clearly the map $\Theta\colon \ov{Z}\to \Gamma\oplus \Gamma_{\!2}\oplus \Gamma_{\!2}$, defined by $\Theta(z(\gamma_1,\gamma'_1,\gamma))\coloneqq (\gamma_1,2\gamma_1+\gamma,\gamma'_1)$, is an isomorphism. We now compute the group $\ov{Z}\cap B$. Let $x \coloneqq \gamma''_1 g+\gamma''_2 g^2+\gamma''_3 g^3\in \Gamma(1)^{00}$. Arguing as above we get that $\wh{d}_{\mathrm{v}}^{002}(x) = f_1(\gamma)$ if and only if $\gamma''_2 = -2\gamma''_1$, $\gamma''_3 = -\gamma''_1$ and $\gamma=-4\gamma''_1$. Hence,
$$
\wh{d}_{\mathrm{h}1}^{011}\bigl(\gamma''_1 g + \gamma''_2 g^2+\gamma''_3 g^3\bigr) = (\gamma''_3-\gamma''_1)g+(\gamma''_2-\gamma''_2)g^2 + (\gamma''_1- \gamma''_3)g^3 = - 2\gamma''_1 g + 2\gamma''_1 g^3,
$$
and so $\ov{Z}\cap B = \{z(2\gamma,0,4\gamma):\gamma\in \Gamma\}$. Thus, $\Theta$ induces an isomorphism $\Ho^2(\mathcal{X}(\mathcal{A},\Gamma))\simeq \frac{\Gamma}{2\Gamma} \oplus \Gamma_{\!2}\oplus\Gamma_{\!2}$.
\end{proof}

Assume that we are under the hypothesis of Theorem~\ref{caso t=u=p}. For each $\gamma'_1\in \Gamma_2$ and $\gamma,\gamma_1\in\Gamma$ such that $4\gamma_1=2\gamma$, let $\xi_{\gamma}^1\colon \ov{M}(2)\to \Gamma$ be as above of Proposition~\ref{complemento 2} and $\xi_{\gamma_1,\gamma'_1,\gamma}^2\colon \ov{D}\ot \ov{M}(1)\to \Gamma$ be the map defined by
$$
\xi_{\gamma_1,\gamma'_1,\gamma}^2(g^{2\imath+\jmath}\ot g^{\imath_1})\coloneqq \sum_{l=0}^{\jmath-1} \gamma_{\imath_1-ul\imath_1} - \imath\imath_1\gamma'_1 = \begin{cases} 0 &\text{$\imath_1 = 0$ or $\imath=\jmath=0$,}\\ -\gamma'_1 &\text{if $\imath=1$, $\jmath=0$ and $\imath_1\in\{1,3\}$,}\\ 0 &\text{if $\imath=1$, $\jmath=0$ and $\imath_1 = 2$,}\\ \gamma_1-\imath\gamma'_1 &\text{if $\jmath=1$ and $\imath_1 = 1$,}\\ 2\gamma_1-\gamma &\text{if $\jmath=1$ and $\imath_1 = 2$,}\\ -\gamma_1-\imath\gamma'_1 &\text{if $\jmath=1$ and $\imath_1 = 3$,}\end{cases}
$$
where $\imath,\jmath\in\{0,1\}$.

\begin{proposition}\label{complemento 2 p=2} The map $(\xi_{\gamma}^1,\xi_{\gamma_1,\gamma'_1,\gamma}^2) \colon \ov{M}(2)\oplus (\ov{D}\ot \ov{M}(1))\to \Gamma$ is a $2$-cocycle of $\wh{C}_N^*(\mathcal{A},\Gamma)$. Moreover each $2$-cocycle of $\wh{C}_N^*(\mathcal{A},\Gamma)$ is cohomologous to a $(\xi_{\gamma}^1,\xi_{\gamma_1,\gamma'_1,\gamma}^2)$ and two $2$-cocycles $(\xi_{\gamma}^1,\xi_{\gamma_1,\gamma'_1,\gamma}^2)$ and $(\xi_{\ov{\gamma}}^1,\xi_{\ov{\gamma}_1,\ov{\gamma}'_1,\ov{\gamma}}^2)$ are cohomologous if and only if $\gamma_1-\ov{\gamma}_1\in 2 \Gamma$, $\gamma-\ov{\gamma}=2(\gamma_1-\ov{\gamma}_1)$ and $\ov{\gamma}'_1=\gamma'_1$.
\end{proposition}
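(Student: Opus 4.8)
The plan is to follow the pattern of the proofs of Propositions~\ref{complemento 1} and~\ref{complemento 2}. By Remark~\ref{mascalculos}, the $2$-cocycles of $\wh{C}_N^*(\mathcal{A},\Gamma)$ of the form $z[\wh{\varphi}_2]$, with $z$ a $2$-cocycle of $\mathcal{X}(\mathcal{A},\Gamma)$, represent all classes in $\Ho_N^2(\mathcal{A},\Gamma)$; and by Theorem~\ref{caso t=u=p} the subgroup $\ov{Z}$ consisting of those $z$ that equal some $z(\gamma_1,\gamma'_1,\gamma)$ already surjects onto $\Ho^2(\mathcal{X}(\mathcal{A},\Gamma))$, with $\ov{Z}\cap B=\{z(2\delta,0,4\delta):\delta\in\Gamma\}$. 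So it suffices to establish the single identity
$$
\bigl(\xi_{\gamma}^1,\xi_{\gamma_1,\gamma'_1,\gamma}^2\bigr)=z(\gamma_1,\gamma'_1,\gamma)\,[\wh{\varphi}_2],
$$
where $[\wh{\varphi}_2]$ is the matrix in~\eqref{matriz}; everything else then follows formally.

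For the $\ov{M}(2)$-coordinate, the shape of $[\wh{\varphi}_2]$ shows this coordinate equals $f_1(\gamma)$ precomposed with $\ide_{\ov{M}(2)}$, and specializing~\eqref{bala1} to $v=4$ and $b=1$ gives $\Lambda(\gamma,1)_{\imath_1\imath_2}=\gamma$ exactly when $\imath_1=\imath_2=1$, $\Lambda(\gamma,1)_{\imath_1\imath_2}=-\gamma$ exactly when $\imath_1,\imath_2\ge 2$ and $\imath_1+\imath_2\le v+1$, and $0$ otherwise; this is precisely $\xi_{\gamma}^1$. For the $\ov{D}\ot\ov{M}(1)$-coordinate I would specialize Proposition~\ref{funciones de arriba} to $t=u=2$, $u'=1$: since $\jmath\in\{0,1\}$ here, the sums $\sum_{l=1}^{\jmath-1}$ are empty, so $\wh{\varphi}_{11}^{10}(g^{2\imath+\jmath}\ot g^{\imath_1})=-\imath g^{\imath_1}$ and $\wh{\varphi}_{11}^{01}(g^{2\imath+\jmath}\ot g^{\imath_1})=\sum_{l=0}^{\jmath-1}g^{\imath_1-2l\imath_1}$. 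Composing the first with $\sum_k\gamma'_k g^k=\sum_k k\gamma'_1 g^k$ and the second with $\sum_k\gamma_k g^k$ yields $\sum_{l=0}^{\jmath-1}\gamma_{\imath_1-ul\imath_1}-\imath\imath_1\gamma'_1$, which is exactly the defining formula for $\xi_{\gamma_1,\gamma'_1,\gamma}^2$; the closed table in the statement is then obtained by inserting $\gamma_0=0$, $\gamma_1$, $\gamma_2=2\gamma_1-\gamma$, $\gamma_3=3\gamma_1-2\gamma$ from~\eqref{maron} and simplifying with $4\gamma_1=2\gamma$ (so $\gamma_3=-\gamma_1$) and $2\gamma'_1=0$ (so $3\gamma'_1=\gamma'_1$).

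Finally, the equivalence statement is formal. Since $z(\cdot,\cdot,\cdot)$ is additive on the subgroup $\{(\gamma_1,\gamma'_1,\gamma):4\gamma_1=2\gamma,\ 2\gamma'_1=0\}$ and the map $\Theta$ of Theorem~\ref{caso t=u=p} is injective, the cocycles $z(\gamma_1,\gamma'_1,\gamma)$ and $z(\ov{\gamma}_1,\ov{\gamma}'_1,\ov{\gamma})$ are cohomologous in $\mathcal{X}(\mathcal{A},\Gamma)$ --- equivalently, $(\xi_{\gamma}^1,\xi_{\gamma_1,\gamma'_1,\gamma}^2)$ and $(\xi_{\ov{\gamma}}^1,\xi_{\ov{\gamma}_1,\ov{\gamma}'_1,\ov{\gamma}}^2)$ are cohomologous in $\wh{C}_N^*(\mathcal{A},\Gamma)$ --- if and only if $z(\gamma_1-\ov{\gamma}_1,\gamma'_1-\ov{\gamma}'_1,\gamma-\ov{\gamma})\in\ov{Z}\cap B$, i.e.\ $(\gamma_1-\ov{\gamma}_1,\gamma'_1-\ov{\gamma}'_1,\gamma-\ov{\gamma})=(2\delta,0,4\delta)$ for some $\delta\in\Gamma$; this is exactly $\gamma_1-\ov{\gamma}_1\in 2\Gamma$, $\gamma-\ov{\gamma}=2(\gamma_1-\ov{\gamma}_1)$ and $\ov{\gamma}'_1=\gamma'_1$. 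The main obstacle I anticipate is the somewhat fiddly case-by-case verification of the $\ov{D}\ot\ov{M}(1)$-coordinate; everything else reuses maps and identities already computed in the paper.
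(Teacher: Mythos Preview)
Your proposal is correct and is precisely the approach the paper takes: its entire proof is the single sentence ``Mimic the proof of Proposition~\ref{complemento 2}.'' You have carried out that mimicry in detail, reducing to the identity $(\xi_{\gamma}^1,\xi_{\gamma_1,\gamma'_1,\gamma}^2)=z(\gamma_1,\gamma'_1,\gamma)[\wh{\varphi}_2]$ via~\eqref{bala1}, Proposition~\ref{funciones de arriba} (specialized to $t=u=2$, $u'=1$), and Theorem~\ref{caso t=u=p}, exactly as in the proof of Proposition~\ref{complemento 2}.
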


\begin{proof} Mimic the proof of Proposition~\ref{complemento 2}.
\end{proof}

\begin{proof}[Proof of Theorem~C] This follows from Remark~\ref{son iguales2}, Proposition~\ref{complemento 2 p=2} and \cite{LV}*{Theorem~5.8}.
\end{proof}

\begin{bibdiv}
\begin{biblist}
		
\bib{B}{article}{
	title={Extensions, matched products, and simple braces},
	author={Bachiller, David},
	journal={Journal of Pure and Applied Algebra},
	volume={222},
	number={7},
	pages={1670--1691},
	year={2018},
	publisher={Elsevier}
}		

\bib{BCJO}{article}{
	author={Bachiller, David},
	author={Ced{\'o}, Ferran},
	author={Jespers, Eric},
	author={Okni{\'n}ski, Jan},	
	title={A family of irretractable square-free solutions of the Yang--Baxter equation},
	booktitle={Forum Mathematicum},
	volume={29},
	number={6},
	pages={1291--1306},
	year={2017},
	organization={De Gruyter}
}

\bib{BG}{article}{
	title={On groups of I-type and involutive Yang--Baxter groups},
	author={David, Nir Ben},
	author={Ginosar, Yuval},
	journal={Journal of Algebra},
	volume={458},
	pages={197--206},
	year={2016},
	publisher={Elsevier}
}				

\bib{CJO}{article}{
	author={Ced{\'o}, Ferran},
	author={Jespers, Eric},
	author={Okni{\'n}ski, Jan},
	title={Retractability of set theoretic solutions of the Yang--Baxter equation},
	journal={Advances in Mathematics},
	volume={224},
	number={6},
	pages={2472--2484},
	year={2010},
	publisher={Elsevier}
}
		
\bib{Ch}{article}{
	title={Fixed-point free endomorphisms and Hopf Galois structures},
	author={Childs, Lindsay},
	journal={Proceedings of the American Mathematical Society},
	volume={141},
	number={4},
	pages={1255--1265},
	year={2013},
	review={\MR{3008873}} 	
}

\bib{C}{article}{
    author={Crainic, Marius},
    title={On the perturbation lemma, and deformations},
    eprint={arXiv:Math. AT/0403266},
    date={2004}
}

\bib{DG}{article}{
	title={On groups of I-type and involutive Yang--Baxter groups},
	author={David, Nir Ben},
	author={Ginosar, Yuval},
	journal={Journal of Algebra},
	volume={458},
	pages={197--206},
	year={2016},
	publisher={Elsevier},
	review={\MR{3500774}}
}	

\bib{De1}{article}{
	title={Set-theoretic solutions of the Yang--Baxter equation, RC-calculus, and Garside germs},
	author={Dehornoy, Patrick},
	journal={Advances in Mathematics},
	volume={282},
	pages={93--127},
	year={2015},
	publisher={Elsevier},
	review={\MR{3374524}}
}

\bib{DDM}{article}{
	title={Garside families and Garside germs},
	author={Dehornoy, Patrick},
	author={Digne, Fran{\c{c}}ois},
	author={Michel, Jean},
	journal={Journal of Algebra},
	volume={380},
	pages={109--145},
	year={2013},
	publisher={Elsevier},
	review={\MR{3023229}}
}		

\bib{ESS}{article}{
	author={Etingof, Pavel},
	author={Schedler, Travis},
	author={Soloviev, Alexandre},
	title={Set-theoretical solutions to the quantum Yang-Baxter equation},
	journal={Duke Math. J.},
	volume={100},
	date={1999},
	number={2},
	pages={169--209},
	issn={0012-7094},
	review={\MR{1722951}},
	doi={10.1215/S0012-7094-99-10007-X},
}

\bib{GI1}{article}{
	title={Noetherian properties of skew polynomial rings with binomial relations},
	author={Gateva-Ivanova, Tatiana},
	journal={Transactions of the American Mathematical Society},
	volume={343},
	number={1},
	pages={203--219},
	year={1994},
	review={\MR{1173854}}	
}

\bib{GI2}{article}{
	title={Set-theoretic solutions of the Yang--Baxter equation, braces and symmetric groups},
	author={Gateva-Ivanova, Tatiana},
	journal={Advances in Mathematics},
	volume={338},
	pages={649--701},
	year={2018},
	publisher={Elsevier},
	review={\MR{3861714}}
}

\bib{GI3}{article}{
	title={Skew polynomial rings with binomial relations},
	author={Gateva-Ivanova, Tatiana},
	journal={Journal of Algebra},
	volume={185},
	number={3},
	pages={710--753},
	year={1996},
	publisher={Elsevier},
	review={\MR{1419721}}	
}

\bib{GI4}{article}{
	title={Quadratic algebras, Yang--Baxter equation, and Artin--Schelter regularity},
	author={Gateva-Ivanova, Tatiana},
	journal={Advances in Mathematics},
	volume={230},
	number={4-6},
	pages={2152--2175},
	year={2012},
	publisher={Elsevier},
	review={\MR{2927367}}
}

\bib{GIM}{article}{
	author={Gateva-Ivanova, Tatiana},
	author={Majid, Shahn},
	title={Matched pairs approach to set theoretic solutions of the Yang--Baxter equation},
	journal={Journal of Algebra},
	volume={319},
	number={4},
	pages={1462--1529},
	year={2008},
	publisher={Elsevier}
}

\bib{GIVB}{article}{
	title={Semigroups ofI-Type},
	author={Gateva-Ivanova, Tatiana},
	author={Van den Bergh, Michel},
	journal={Journal of Algebra},
	volume={206},
	number={1},
	pages={97--112},
	year={1998},
	publisher={Elsevier},
	review={\MR{1637256}}
}

\bib{GG}{article}{
  title={Hochschild (co) homology of Hopf crossed products},
  author={Guccione, Jorge A},
  author={Guccione, Juan J},
  journal={K-theory},
  volume={25},
  pages={138-169},
  year={2002}
}

\bib{JO}{article}{
	title={Monoids and groups of I-type},
	author={Jespers, Eric},
	author={Okni{\'n}ski, Jan},
	journal={Algebras and representation theory},
	volume={8},
	number={5},
	pages={709--729},
	year={2005},
	publisher={Springer},
	review={\MR{2189580}}	
}

\bib{LV}{article}{
	author={Lebed, Victoria},
	author={Vendramin, Leandro},
	title={Cohomology and extensions of braces},
	journal={Pacific Journal of Mathematics},
	volume={284},
	number={1},
	pages={191--212},
	year={2016},
	publisher={Mathematical Sciences Publishers}
}

\bib{H}{article}{
  title={Relative homological algebra},
  author={Hochschild, Gerald},
  journal={Transactions of the American Mathematical Society},
  volume={82},
  number={1},
  pages={246--269},
  year={1956},
  publisher={JSTOR}
}

\bib{R1}{article}{
	title={A decomposition theorem for square-free unitary solutions of the quantum Yang-Baxter equation},
	author={Rump, Wolfgang},
	journal={Advances in Mathematics},
	volume={193},
	number={1},
	pages={40--55},
	year={2005},
	publisher={Elsevier}
}

\bib{R2}{article}{
	title={Braces, radical rings, and the quantum Yang--Baxter equation},
	author={Rump, Wolfgang},
	journal={Journal of Algebra},
	volume={307},
	number={1},
	pages={153--170},
	year={2007},
	publisher={Elsevier}
}

\bib{R3}{article}{
	title={The brace of a classical group},
	author={Rump, Wolfgang},
	journal={Note di Matematica},
	volume={34},
	number={1},
	pages={115--145},
	year={2014},
}

\end{biblist}
\end{bibdiv}

\end{document}